\documentclass[12pt,oneside,reqno]{amsart}

\usepackage{amsmath,amssymb,amsthm}
\usepackage{geometry}
\usepackage{hyperref}
\hypersetup{hidelinks}
\newtheorem{theorem}{Theorem}[section]
\newtheorem{lemma}[theorem]{Lemma}
\newtheorem{proposition}[theorem]{Proposition}
\newtheorem{corollary}[theorem]{Corollary}
\theoremstyle{definition}
\newtheorem{definition}[theorem]{Definition}
\theoremstyle{remark}
\newtheorem*{remark*}{Remark}
\numberwithin{equation}{section}

\begin{document}

\title[Edge complexity of graphs]{Edge complexity of graphs}
\author{V. Gupta, A. Iosevich, J. Iosevich, B. Song, and H. Tian}
\thanks{The second listed author was supported in part by NSF grant DMS-2506858.}
\date{\today}
\subjclass[2020]{Primary 05C50; Secondary 05C25, 05C80, 05B10, 42A38}
\keywords{Fourier ratio, graph energy, graph labeling, spectral graph theory, Cayley graphs, Singer difference sets, graph products, random graphs, metric entropy}

\begin{abstract}
Gupta and Iosevich introduced the edge complexity of a graph as the minimum Fourier ratio of its adjacency matrix over all vertex labelings and bounded it below by graph energy divided by the square root of twice the number of edges. We characterize equality for a fixed labeling: the Fourier transform of the adjacency matrix must have at most one nonzero entry in each row and column. This implies regularity, circulancy of every positive even power of an extremizing adjacency matrix, and a parity restriction on connected components, and it gives equality results for certain Laplacian spectral projectors. We construct equality cases from affine involutions on cyclic groups. Singer difference sets yield, for every prime power $q$, an equality-attaining $(q+1)$-regular graph that is not an abelian Cayley graph. We also establish Fourier-ratio estimates for weak, Cartesian, and strong graph products, including preservation of equality under weak products of coprime orders. We use Fourier-ratio recovery as a coding theorem to obtain entropy upper bounds for low-complexity adjacency matrices and complement them with a lower bound obtained by perturbing complete graphs. Finally, a concentration argument shows that if $Np_N/\log N\to\infty$ and $\limsup_{N\to\infty}p_N<1$, then $\operatorname{FR}_{\min}(G(N,p_N))$ is of order $N$ with probability tending to one.
\end{abstract}

\maketitle

\section{Introduction}

Let \(G=(V,E)\) be a simple graph with at least one edge and \(|V|=N\). After choosing a labeling of the vertices by \(\mathbb Z_N\), identify the adjacency matrix with the edge indicator
\[
f:\mathbb Z_N\times\mathbb Z_N\longrightarrow\{0,1\}.
\]
Its two-dimensional discrete Fourier transform is
\[
\widehat f(m,n)
=\frac1N\sum_{x,y\in\mathbb Z_N}f(x,y)e^{-2\pi i(mx+ny)/N}.
\]
Let \(F\) be the unitary Fourier matrix
\[
F_{mx}=\frac1{\sqrt N}e^{-2\pi i mx/N}.
\]
Then \(\widehat f=FfF\). Throughout the paper, \(\|M\|_1\) denotes the entrywise \(\ell^1\)-norm, \(\|M\|_2\) denotes the Frobenius norm, and \(\|M\|_{S_1}\) denotes the Schatten \(1\)-norm (nuclear norm). Parseval's identity gives \(\|\widehat f\|_2=\|f\|_2\).

The Fourier ratio of $f$ is
\[
\operatorname{FR}(f)=\frac{\|\widehat f\|_1}{\|\widehat f\|_2}.
\]
This quantity and its role in recovery, localization, and learning have been studied in \cite{Aldaleh2025,FRdiscrete}; see also \cite{Changpractical} for an additive-combinatorial application.
If \(f_\sigma\) is the adjacency matrix produced by a vertex labeling \(\sigma\), the edge complexity introduced in \cite{GuptaIosevich} is
\[
\operatorname{FR}_{\min}(G)
=\min_{\sigma\in S_N}\operatorname{FR}(f_\sigma).
\]
This invariant measures the best Fourier compressibility obtainable by relabeling the vertices.

Let \(L=D-A\) be the graph Laplacian. For a Laplacian eigenvalue \(\lambda\), let \(\Pi_{\lambda,\sigma}\) denote the matrix (equivalently, the kernel) of the orthogonal projector onto the \(\lambda\)-eigenspace under the labeling \(\sigma\). Its harmonic complexity is
\[
\operatorname{FR}_{\min}(\Pi_\lambda)
=\min_{\sigma\in S_N}
\frac{\|\widehat{\Pi_{\lambda,\sigma}}\|_1}
{\|\widehat{\Pi_{\lambda,\sigma}}\|_2}.
\]

The graph energy is
\[
\mathcal E(G)=\sum_{j=1}^N |\lambda_j(G)|=\|A\|_{S_1},
\]
where \(\lambda_1(G),\dots,\lambda_N(G)\) are the adjacency eigenvalues. Gupta and Iosevich proved the following lower bounds.

\begin{theorem}[\cite{GuptaIosevich}]\label{thm:energy-bound}
Let \(G\) be a graph of order \(N\) and size \(s>0\). Then
\[
\operatorname{FR}_{\min}(G)\geq \frac{\mathcal E(G)}{\sqrt{2s}}.
\]
Equality holds for every Cayley graph on \(\mathbb Z_N\) under its natural labeling.
\end{theorem}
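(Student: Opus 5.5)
The plan is to bound the numerator and denominator of the Fourier ratio separately for a fixed labeling $\sigma$, and then minimize over $\sigma$.

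For the denominator, Parseval gives $\|\widehat{f_\sigma}\|_2=\|f_\sigma\|_2$. Since $f_\sigma$ is a $0/1$ symmetric matrix with exactly $2s$ ones, $\|\widehat{f_\sigma}\|_2=\sqrt{2s}$, independently of $\sigma$.

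For the numerator, we use $\widehat{f_\sigma}=Ff_\sigma F$ with $F$ unitary. The Schatten $1$-norm is unitarily invariant, so $\|\widehat{f_\sigma}\|_{S_1}=\|f_\sigma\|_{S_1}$. Relabeling conjugates $A$ by a permutation matrix, which does not change the spectrum, so this equals $\mathcal E(G)$. It then suffices to show $\|M\|_{S_1}\le \|M\|_1$ for every complex matrix $M$.

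To prove this, write $M=\sum_{j,k}M_{jk}e_je_k^{T}$. Each rank-one term $e_je_k^T$ has a single singular value equal to $1$. The triangle inequality for the nuclear norm then gives $\|M\|_{S_1}\le\sum_{j,k}|M_{jk}|=\|M\|_1$. Applying this to $M=\widehat{f_\sigma}$ yields
\[
\operatorname{FR}(f_\sigma)\ge \frac{\mathcal E(G)}{\sqrt{2s}}
\]
for every $\sigma$. Taking the minimum over $\sigma$ gives the inequality.

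For the equality statement, let $G$ be a Cayley graph on $\mathbb Z_N$ with a symmetric connection set $S\subseteq\mathbb Z_N\setminus\{0\}$, under its natural labeling. Then $f(x,y)=1_S(y-x)$. A direct computation, substituting $y=x+z$, gives
\[
\widehat f(m,n)=\frac1N\sum_{x,z}1_S(z)e^{-2\pi i((m+n)x+nz)/N}
=\delta_{m+n\equiv 0}\sum_{z\in S}e^{-2\pi i nz/N}.
\]
So $\widehat f$ is supported on the anti-diagonal $m=-n$, with entries $\mu_n=\sum_{z\in S}e^{-2\pi i nz/N}$. These $\mu_n$ are exactly the eigenvalues of the circulant $A$, and they are real because $S=-S$. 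Hence
\[
\|\widehat f\|_1=\sum_n|\mu_n|=\mathcal E(G).
\]
This gives $\operatorname{FR}(f)=\mathcal E(G)/\sqrt{2s}$, and combined with the lower bound, equality holds for $\operatorname{FR}_{\min}(G)$ as well.

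The only step needing care is the inequality $\|M\|_{S_1}\le\|M\|_1$; the rank-one decomposition handles it directly. Identifying the $\mu_n$ with the circulant eigenvalues is standard.
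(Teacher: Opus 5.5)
Your proof is correct. The paper quotes this theorem from Gupta and Iosevich rather than reproving it. The same argument appears in the proof of Theorem~\ref{thm:equality-criterion} and in Corollary~\ref{cor:cyclic-cayley}, and you diverge from it at two points.

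For $\|M\|_{S_1}\le\|M\|_1$, you expand $M$ in matrix units and use the triangle inequality for the nuclear norm. The paper instead takes an SVD $M=U\Sigma V^\ast$, puts $W=UV^\ast$, and bounds $\|M\|_{S_1}=\operatorname{Re}\operatorname{tr}(W^\ast M)\le\sum_{m,n}|W_{mn}||M_{mn}|\le\|M\|_1$. Your version is shorter, but it treats the norm property of $\|\cdot\|_{S_1}$ as a black box; the standard proof of that property is exactly this trace duality. The paper's version is self-contained and also exposes the equality case: equality forces $|W_{mn}|=1$ on the support of $M$, which yields the one-nonzero-entry-per-row-and-column criterion. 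Equality in a triangle inequality over $N^2$ rank-one pieces is much harder to characterize.

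For Cayley graphs, you show $\widehat f$ is supported on $m+n\equiv0$ and identify its entries with the real circulant eigenvalues, which gives $\|\widehat f\|_1=\mathcal E(G)$ directly. The paper uses only the support pattern: $\|\widehat f\|_{S_1}=\|\widehat f\|_1$ then follows after a column permutation, with no need to recognize the entries as eigenvalues. This structural argument is what lets the paper extend equality to the affine-involution graphs of Theorem~\ref{thm:affine-involution}. There the entries of $\widehat f$ are generally non-real, and only their moduli are singular values of the adjacency matrix.
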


\begin{theorem}[\cite{GuptaIosevich}]\label{thm:projector-bound}
Let \(\lambda\) be a Laplacian eigenvalue of multiplicity \(m(\lambda)\). Then
\[
\operatorname{FR}_{\min}(\Pi_\lambda)\geq \sqrt{m(\lambda)}.
\]
Equality holds for Cayley graphs on \(\mathbb Z_N\) under their natural labeling.
\end{theorem}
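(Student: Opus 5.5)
We need to prove Theorem \ref{thm:projector-bound}: for any labeling $\sigma$, $\|\widehat{\Pi_{\lambda,\sigma}}\|_1/\|\widehat{\Pi_{\lambda,\sigma}}\|_2\ge \sqrt{m(\lambda)}$, with equality for Cayley graphs on $\mathbb Z_N$ under the natural labeling. The plan follows the proof of Theorem \ref{thm:energy-bound}. Everything reduces to comparing the entrywise $\ell^1$ norm with the Schatten $1$-norm, using that $\widehat\Pi = F\Pi F$ is unitarily equivalent to $\Pi$.

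\textbf{Step 1: denominator.} Fix a labeling and write $\Pi=\Pi_{\lambda,\sigma}$. This is an orthogonal projector of rank $m=m(\lambda)$, and relabeling is a permutation conjugation, so rank and spectrum do not depend on $\sigma$. By Parseval,
\[
\|\widehat\Pi\|_2=\|\Pi\|_2=\sqrt{\operatorname{tr}(\Pi^*\Pi)}=\sqrt{\operatorname{tr}\Pi}=\sqrt m .
\]

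\textbf{Step 2: numerator.} For any matrix $M$, the nuclear norm is bounded by the entrywise $\ell^1$ norm:
\[
\|M\|_{S_1}\le \|M\|_1 .
\]
To see this, write $M=\sum_{j,k}M_{jk}e_je_k^T$; each rank-one term has nuclear norm $|M_{jk}|$, and the triangle inequality finishes it. Since $F$ is unitary, $\widehat\Pi=F\Pi F$ has the same singular values as $\Pi$, namely $m$ ones and zeros. Hence
\[
\|\widehat\Pi\|_1\ge\|\widehat\Pi\|_{S_1}=\|\Pi\|_{S_1}=m .
\]
Dividing gives $\operatorname{FR}(\Pi)\ge m/\sqrt m=\sqrt m$. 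Minimizing over $\sigma$ proves the inequality. This is the same mechanism as Theorem \ref{thm:energy-bound}, where one gets $\|\widehat A\|_1\ge\|A\|_{S_1}=\mathcal E(G)$ and $\|\widehat A\|_2=\sqrt{2s}$.

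\textbf{Step 3: equality for Cayley graphs.} Suppose $G$ is a Cayley graph on $\mathbb Z_N$ with the natural labeling. Then $A$ and $L$ are circulant and are diagonalized by the characters $\chi_k(x)=N^{-1/2}e^{2\pi ikx/N}$. Therefore
\[
\Pi_\lambda=\sum_{k\in K_\lambda}\chi_k\chi_k^*,
\]
where $K_\lambda$ is the set of frequencies $k$ whose eigenvalue equals $\lambda$, and $|K_\lambda|=m$. Next compute $F\chi_k$ and $\chi_k^*F$ directly. Each is a standard basis vector, up to the sign and reflection conventions: $F\chi_k=e_k$ and $\chi_k^*F=e_{-k}^T$. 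So $\widehat{\Pi_\lambda}$ is a sum of $m$ matrix units $e_ke_{-k}^T$ in distinct rows and columns. Its entries are $m$ ones, so $\|\widehat\Pi\|_1=m$ and the ratio equals $\sqrt m$.

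The only step needing care is this last one: tracking the exact index in $\chi_k^*F$, so as to confirm there is exactly one unimodular entry per frequency with no cancellation or collision between distinct frequencies. This holds because $k\mapsto -k$ is a bijection. Beyond that, the argument is routine.
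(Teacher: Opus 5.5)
Your proof is correct and follows the paper's mechanism: $\|\widehat\Pi\|_1\ge\|\widehat\Pi\|_{S_1}=\|\Pi\|_{S_1}=m$ together with $\|\widehat\Pi\|_2=\sqrt m$ is the nuclear-norm versus entrywise-$\ell^1$ comparison the paper uses for the energy bound (see the proof of Theorem~\ref{thm:equality-criterion}), and your formula $F\Pi_\lambda F=\sum_{k\in K_\lambda}e_ke_{-k}^T$ is the partial-permutation structure exploited in Proposition~\ref{prop:spectral-projector}. The paper itself only cites this result from Gupta and Iosevich without reproving it, so there is nothing further to reconcile.
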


The first purpose of this paper is to understand equality in the energy bound. Theorem~\ref{thm:equality-criterion} gives a complete Fourier characterization for a fixed labeling. It converts equality between the entrywise \(\ell^1\)-norm and the nuclear norm into a weighted partial-permutation condition on the transformed adjacency matrix. This immediately imposes strong combinatorial and spectral restrictions. In particular, every equality-attaining graph is regular, and every positive even power of an extremizing adjacency matrix is circulant.

The second purpose is constructive. The affine-involution construction in Theorem~\ref{thm:affine-involution} includes cyclic Cayley graphs. Applied to Singer difference sets, it yields an infinite family of equality-attaining graphs with no four-cycles. Since every abelian Cayley graph of degree at least three contains a four-cycle, these examples cannot be abelian Cayley graphs.

The third purpose is to understand how edge complexity behaves under standard graph operations. For weak, Cartesian, and strong products we derive quantitative upper bounds from the product Fourier transform. These estimates are then compared with the corresponding graph energies. Weak products of equality-attaining graphs remain equality-attaining when the graph orders are coprime, while Cartesian and strong products remain within explicit factors of the energy lower bound. We also record a two-sided energy estimate for graph joins.

The fourth purpose is to quantify the size of the class of graphs with low edge complexity. We apply the recovery theorem below as a coding theorem: a common sparse sample recovers a positive proportion of all low-Fourier-ratio adjacency matrices, and the recovered matrices lie in small Hamming balls around a bounded collection of centers. This gives strong entropy upper bounds. We complement them with a lower bound obtained by perturbing the complete graph.

The fifth purpose is to determine the correct order of the minimum Fourier ratio of random graphs. If $Np_N/\log N\to\infty$ and $\limsup_{N\to\infty}p_N<1$, we prove that
\[
cN\leq\operatorname{FR}_{\min}(G(N,p_N))\leq N
\]
with probability tending to one; in particular, for every fixed $p\in(0,1)$, the minimum Fourier ratio of $G(N,p)$ is of order $N$, answering a question of \cite{GuptaIosevich}. The proof is based on concentration: the $\ell^1$-norm of the Fourier transform of the adjacency matrix is a convex Lipschitz function of the edge variables, and the resulting concentration inequality is strong enough for a union bound over all $N!$ vertex labelings.

We record the recovery result used in the counting argument.

\begin{theorem}[\cite{GuptaIosevich}]\label{thm:recovery}
Let \(f:\mathbb Z_N^2\to\mathbb C\) satisfy \(\|f\|_\infty\leq1\) and \(\operatorname{FR}(f)\leq r\). Fix \(\epsilon\in(0,1)\), and let \(X\subset\mathbb Z_N^2\) be obtained by retaining each point independently with probability \(p\). There exist absolute constants \(c,C>0\) such that, if
\[
pN^2\geq C\frac{r^2}{\epsilon^2}
\left(\log\frac r\epsilon\right)^2\log N,
\]
then, with probability at least \(1-e^{-cpN^2}\), the minimizer \(f^\ast\) selected by any fixed deterministic tie-breaking rule for
\[
\min_g\|\widehat g\|_1
\quad\text{subject to}\quad
\|g-f\|_{\ell^2(X)}\leq\epsilon\|f\|_2
\]
satisfies
\[
\|f^\ast-f\|_2\leq 11.47\epsilon\|f\|_2.
\]
\end{theorem}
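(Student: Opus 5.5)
The plan is to treat this as a compressed sensing statement in the Fourier basis. The unknown is $\widehat f$, and the measurements are samples of $f=F^{\ast}\widehat f F^{\ast}$ on $X$. The sensing matrix is the set of rows of the unitary two-dimensional DFT indexed by $X$, rescaled by $1/\sqrt p$. The unitary DFT on $\mathbb Z_N^2$ has all entries of modulus $1/N$, so it is a bounded orthonormal system with optimal coherence. The proof then has two ingredients: a restricted isometry property for random Bernoulli row selections of this system, and the deterministic stable and robust $\ell^1$ recovery theorem applied to an approximately sparse $\widehat f$.

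\textbf{Step 1: from the Fourier ratio to approximate sparsity.} Set $s\approx r^2/\epsilon^2$ and let $\widehat f_s$ keep the $s$ largest entries of $\widehat f$. The standard estimates $\sigma_s(\widehat f)_2\le \|\widehat f\|_1/(2\sqrt s)$ and $\sigma_s(\widehat f)_1\le\|\widehat f\|_1$, combined with $\|\widehat f\|_1\le r\|\widehat f\|_2=r\|f\|_2$, give
\[
\frac{\sigma_s(\widehat f)_1}{\sqrt s}\lesssim \frac{r}{\sqrt s}\,\|f\|_2\lesssim \epsilon\|f\|_2 .
\]
So a Fourier ratio bounded by $r$ means $\widehat f$ is $\epsilon$-close to $s$-sparse in the relative sense that the recovery theorem requires.

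\textbf{Step 2: RIP for the sampled Fourier system.} I will invoke the Rudelson--Vershynin/Bourgain/Haviv--Regev bound for bounded orthonormal systems. With $n=N^2$ and expected sample size $m=pN^2$, the rescaled sampling operator $\Phi$ satisfies RIP of order $2s$ with constant $\delta_{2s}<0.4$ (or any fixed threshold) once $m\ge C s\log^2 s\log n$. Substituting $s\approx r^2/\epsilon^2$ gives exactly the stated hypothesis $pN^2\ge C (r^2/\epsilon^2)(\log(r/\epsilon))^2\log N$. The Bernoulli model needs two extra points. First, Chernoff gives $|X|\asymp pN^2$ with probability $1-e^{-cpN^2}$. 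Second, the RIP failure probability must be upgraded to the form $e^{-cpN^2}$; I expect to do this through the Bernoulli-selector version of the RIP argument, i.e.\ Dudley/chaining for the supremum plus a Talagrand-type tail for the deviation, after absorbing constants.

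\textbf{Step 3: deterministic recovery and constants.} On this event the robust null space property holds, and the standard theorem gives
\[
\|f^\ast-f\|_2=\|\widehat{f^\ast}-\widehat f\|_2\le C_1\frac{\sigma_s(\widehat f)_1}{\sqrt s}+C_2\eta .
\]
Here $\eta$ is the noise level in the constraint, namely $\epsilon\|f\|_2$ in the $\ell^2(X)$ norm after the $1/\sqrt p$ normalization is accounted for. The true $f$ is feasible, so the minimizer set is nonempty. The bound holds for every minimizer, so any deterministic tie-breaking rule is allowed. Combining with Step 1 gives $\|f^\ast-f\|_2\le C\epsilon\|f\|_2$. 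The explicit constant $11.47$ comes from choosing $\delta_{2s}$ and the proportionality constant in $s\approx r^2/\epsilon^2$, then evaluating $C_1,C_2$ from, e.g., the Foucart--Rauhut constants. I expect the hypothesis $\|f\|_\infty\le1$ to be needed only to relate $\|f\|_{\ell^2(X)}$ to $p\|f\|_2^2$ when matching the normalization of the constraint.

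The main obstacle is Step 2. Getting the RIP with failure probability $e^{-cpN^2}$, rather than the usual polynomial $N^{-c\log^3 s}$, is delicate. If that upgrade fails, my fallback is to state the result with the weaker probability that the standard RIP theorems provide.
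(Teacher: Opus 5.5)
The paper does not prove this statement. It is quoted from \cite{GuptaIosevich}, and the only argument the paper adds is the remark that the error estimate uses nothing beyond feasibility of $f$ and $\|\widehat{f^\ast}\|_1\le\|\widehat f\|_1$. That is exactly how you justify the tie-breaking clause in Step~3. Your compressed-sensing route is the natural one and reproduces the stated sample complexity: approximate $s$-sparsity with $s\approx r^2/\epsilon^2$, then RIP for Bernoulli-selected rows of the two-dimensional DFT, then the robust null space property.

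The real gap is the one you flagged in Step~2, and it is not merely delicate: the tail $1-e^{-cpN^2}$ with an absolute $c$ is false, so no chaining or Talagrand-type argument can deliver it. Take $f=\mathbf 1_{\{0\}\times\mathbb Z_N}$. Then $\widehat f(m,n)=\mathbf 1_{\{n=0\}}$, so $\operatorname{FR}(f)=\sqrt N$. Put $r=\sqrt N$, $\epsilon=1/12$ (so $11.47\epsilon<1$), and $p=144\,C(\log(12\sqrt N))^2(\log N)/N$; this meets the hypothesis with equality and tends to $0$. With probability $(1-p)^N\ge e^{-2pN}$ the sample misses $\{0\}\times\mathbb Z_N$. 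On that event $f|_X\equiv0$, so $g=0$ is feasible under any normalization of $\ell^2(X)$. It is then the unique minimizer, and $\|f^\ast-f\|_2=\|f\|_2$. Since $e^{-2pN}>e^{-cpN^2}$ once $N>2/c$, the claimed probability fails. In this example the failure probability is at least $(1-p)^{N^2/r^2}$, so any correct tail must deteriorate with $r$. Your fallback, the polynomial tail from the RIP theorems, is the right statement. It is also all the paper uses: Lemma~\ref{lem:common-sample} and the proofs of Theorems~\ref{thm:enumeration-coarse} and~\ref{thm:enumeration} only need success probability $3/4$.

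Step~3 also hides a normalization problem that you gesture at but do not resolve. Suppose $\|h\|_{\ell^2(X)}^2=\sum_{x\in X}|h(x)|^2$, and set $\Phi\widehat g=p^{-1/2}g|_X$. Then the constraint reads $\|\Phi(\widehat g-\widehat f)\|_2\le\epsilon\|f\|_2/\sqrt p$, so the robust recovery bound only gives $C\epsilon\|f\|_2/\sqrt p$. Worse, $\mathbb E\|f\|_{\ell^2(X)}^2=p\|f\|_2^2$, so by Markov's inequality $g=0$ is feasible, and hence $f^\ast=0$, with probability at least $1-p/\epsilon^2$. The hypothesis allows $p\to0$ with $r$ and $\epsilon$ fixed (e.g.\ $f\equiv1$, $r=1$).

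You must therefore fix the convention $\|h\|_{\ell^2(X)}^2=p^{-1}\sum_{x\in X}|h(x)|^2$. Under it, $\eta=\epsilon\|f\|_2$ and Step~3 is correct. Your guess about $\|f\|_\infty\le1$ is off: hypotheses and conclusion are invariant under $f\mapsto\lambda f$, so that condition cannot relate $\|f\|_{\ell^2(X)}$ to $p\|f\|_2^2$ and plays no role. With the normalized sampling norm and the weaker probability, your Steps 1--3 do give a complete proof of the repaired statement.
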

\begin{remark*}
The cited theorem is stated in terms of ``the solution'' of the minimization problem. When the minimizer is not unique, we fix a deterministic tie-breaking rule once and for all. The proof of the recovery estimate uses only feasibility of \(f\) and the minimizing inequality
\[
\|\widehat{f^\ast}\|_1\leq\|\widehat f\|_1,
\]
so the same argument applies to the minimizer selected by any such fixed rule. This is the form needed in the counting argument of Section~\ref{section:enumeration}.
\end{remark*}

We will also need the notion of Hamming distance and Hamming balls in the counting argument.

\begin{definition}\label{def:hamming-distance}
For $A,B\in\{0,1\}^{N\times N}$, their Hamming distance is
\[
d_H(A,B)=\bigl|\{(i,j)\in\mathbb Z_N^2:A_{ij}\neq B_{ij}\}\bigr|.
\]
Thus entries are counted as ordered matrix positions.
\end{definition}

\begin{definition}\label{def:hamming-ball}
For $A\in\{0,1\}^{N\times N}$ and $R\geq0$, the Hamming ball of radius $R$ centered at $A$ is
\[
B_H(A,R)=\{B\in\{0,1\}^{N\times N}:d_H(A,B)\leq R\}.
\]
\end{definition}

\section{Equality cases of the energy and spectral bounds}\label{section3}
We now characterize and construct equality cases for Theorem~\ref{thm:energy-bound}. The central point is that the energy bound comes from the general inequality between the nuclear norm and the entrywise $\ell^1$-norm. Equality therefore has a rigid matrix-theoretic form.

\subsection{Conditions for equality}\label{section3.1}
We first give a Fourier criterion for equality in the bound.

The equality condition between the nuclear norm and the entrywise $\ell^1$-norm is proved by \cite[Lemma 3]{ZhuHayashiChen} in a different way. Here we include a short self-contained proof for completeness and to match the notation of the present paper.

\begin{theorem}\label{thm:equality-criterion}
Let \(G\) be a simple graph of order \(N\) and size \(s>0\), and let \(A_\sigma\) be its adjacency matrix under a fixed vertex labeling \(\sigma\). Then
\[
\operatorname{FR}(A_\sigma)=\frac{\mathcal E(G)}{\sqrt{2s}}
\]
if and only if every row and every column of \(\widehat{A_\sigma}=FA_\sigma F\) contains at most one nonzero entry.
\end{theorem}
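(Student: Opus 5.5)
Set \(M=\widehat{A_\sigma}=FA_\sigma F\). Since \(F\) is unitary, \(M\) has the same singular values as \(A_\sigma\). Because \(A_\sigma\) is real symmetric, its singular values are \(|\lambda_j(G)|\). Hence \(\|M\|_{S_1}=\mathcal E(G)\). By Parseval, \(\|M\|_2=\|A_\sigma\|_2=\sqrt{2s}\). The claimed equality \(\operatorname{FR}(A_\sigma)=\mathcal E(G)/\sqrt{2s}\) is therefore equivalent to
\[
\|M\|_1=\|M\|_{S_1}.
\]
So the theorem reduces to a purely matrix-theoretic statement: for any complex \(N\times N\) matrix \(M\), \(\|M\|_{S_1}\le\|M\|_1\), with equality if and only if every row and column of \(M\) has at most one nonzero entry. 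The inequality itself, which is Theorem~\ref{thm:energy-bound}, is reproved in the course of the argument.

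For the inequality, write \(M=\sum_{j,k}M_{jk}e_je_k^{T}\). Each summand is rank one with nuclear norm \(|M_{jk}|\), so the triangle inequality for \(\|\cdot\|_{S_1}\) gives \(\|M\|_{S_1}\le\|M\|_1\).

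For the \emph{if} direction, suppose \(M\) is a weighted partial permutation matrix, say \(M=\sum_{(j,k)\in P}M_{jk}e_je_k^T\) with \(P\) a partial matching. Then \(M^*M\) is diagonal with entries \(|M_{jk}|^2\). The singular values of \(M\) are therefore exactly the \(|M_{jk}|\), so \(\|M\|_{S_1}=\|M\|_1\).

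The \emph{only if} direction is the main step, and I would argue via duality. Write \(M=U\Sigma V^*\) as a singular value decomposition and set \(W=UV^*\), which is unitary. Then \(\|M\|_{S_1}=\operatorname{Re}\operatorname{tr}(W^*M)=\sum_{j,k}\operatorname{Re}\bigl(\overline{W_{jk}}M_{jk}\bigr)\). Since every entry of a unitary matrix satisfies \(|W_{jk}|\le1\), each term is at most \(|M_{jk}|\). Equality \(\|M\|_{S_1}=\|M\|_1\) therefore forces \(\overline{W_{jk}}M_{jk}=|M_{jk}|\) for every \((j,k)\). Hence \(|W_{jk}|=1\) whenever \(M_{jk}\neq0\). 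A unitary matrix has rows and columns of unit \(\ell^2\)-norm, so an entry of modulus \(1\) forces all other entries in its row and column to vanish.

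It remains to transfer this sparsity from \(W\) to \(M\). Suppose \(M_{jk}\neq0\) and \(M_{jk'}\neq0\) with \(k\neq k'\). Then \(|W_{jk}|=|W_{jk'}|=1\), which contradicts \(\sum_k|W_{jk}|^2=1\). The same argument applies to columns, so every row and column of \(M\) has at most one nonzero entry.

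The one delicate point is the choice of \(W\) when \(M\) is singular, since the SVD is then not unique. This causes no difficulty: any \(U,V\) from an SVD give a unitary \(W\) with \(\operatorname{tr}(W^*M)=\operatorname{tr}(\Sigma)=\|M\|_{S_1}\), and that identity is all the argument uses. Applied to \(M=\widehat{A_\sigma}\), this yields the theorem.
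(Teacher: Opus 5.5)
Your proof is correct and follows the paper's argument. You make the same reduction to $\|M\|_1=\|M\|_{S_1}$ via unitary invariance and Parseval, and you use the same duality step with $W=UV^\ast$ forcing $|W_{jk}|=1$ on the support of $M$. The differences are cosmetic: in the converse you read off the singular values from the diagonal matrix $M^\ast M$, whereas the paper extends the partial matching to a permutation matrix $Q$ with $MQ$ diagonal, and your separate rank-one triangle-inequality proof of $\|M\|_{S_1}\le\|M\|_1$ is redundant because the duality chain already gives it.
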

\begin{proof}
Let \(M=\widehat{A_\sigma}\). Left and right multiplication by unitary matrices preserve singular values, so
\[
\|M\|_{S_1}=\|A_\sigma\|_{S_1}=\mathcal E(G).
\]
Parseval's identity and the fact that \(A_\sigma\) has exactly \(2s\) entries equal to one give
\[
\|M\|_2=\|A_\sigma\|_2=\sqrt{2s}.
\]
Consequently, equality in the energy bound for this labeling is equivalent to
\begin{equation}\label{eq:l1-nuclear-equality}
\|M\|_1=\|M\|_{S_1}.
\end{equation}

Choose a singular value decomposition \(M=U\Sigma V^\ast\), where \(U,V\) are unitary and \(\Sigma=\operatorname{diag}(\mu_1,\dots,\mu_N)\). Put \(W=UV^\ast\). Then \(W\) is unitary and
\[
\operatorname{tr}(W^\ast M)=\operatorname{tr}(V\Sigma V^\ast)=\sum_{j=1}^N\mu_j=\|M\|_{S_1}.
\]
Taking real parts and using \(|W_{mn}|\leq1\), we obtain
\[
\|M\|_{S_1}
=\operatorname{Re}\sum_{m,n}\overline{W_{mn}}M_{mn}
\leq\sum_{m,n}|W_{mn}||M_{mn}|
\leq\sum_{m,n}|M_{mn}|=\|M\|_1.
\]
If~\eqref{eq:l1-nuclear-equality} holds, then equality holds in the final inequality, and hence \(|W_{mn}|=1\) whenever \(M_{mn}\neq0\). A row of a unitary matrix cannot contain two entries of modulus one. Thus every row of \(M\) contains at most one nonzero entry. Applying the same argument to the columns gives the corresponding column condition.

Conversely, suppose every row and column of \(M\) contains at most one nonzero entry. The nonzero entries form a partial matching between rows and columns, which can be extended to a permutation \(\pi\in S_N\). If \(Q\) is the associated permutation matrix, then \(MQ\) is diagonal. Hence the singular values of \(M\) are the absolute values of its nonzero entries, and therefore
\[
\|M\|_{S_1}=\|M\|_1.
\]
This is equivalent to equality in the energy bound.
\end{proof}

For the remainder of this section, we use the Hermitian transform
\[
\widehat A:=FAF^\ast.
\]
Since $F^\ast=FF^2$ and $F^2$ is the permutation matrix implementing reflection on $\mathbb Z_N$, the matrices $FAF$ and $FAF^\ast$ differ only by a permutation of columns. Their entrywise $\ell^1$-norms, Frobenius norms, and row--column support conditions are therefore identical.

\begin{lemma}\label{lem:hermitian-transform}
If $A$ is an adjacency matrix, then $\widehat A=FAF^\ast$ is Hermitian.
\end{lemma}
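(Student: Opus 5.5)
The plan is a direct computation of the conjugate transpose. Write $\widehat A=FAF^\ast$ and take its adjoint, using $(XYZ)^\ast=Z^\ast Y^\ast X^\ast$:
\[
\widehat A^{\,\ast}=(FAF^\ast)^\ast=(F^\ast)^\ast A^\ast F^\ast=FA^\ast F^\ast .
\]
It therefore suffices to check that $A^\ast=A$.

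Since $A$ is the adjacency matrix of a simple undirected graph, it is real and symmetric: $A_{xy}=A_{yx}\in\{0,1\}$. Hence $A^\ast=\overline{A}^{\,T}=A^T=A$. Substituting gives $\widehat A^{\,\ast}=FAF^\ast=\widehat A$, which is the claim.

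Nothing here is a genuine obstacle. The only point to watch is not to confuse $F^\ast$ with $F$. The matrix $F$ is symmetric but not Hermitian, so $FAF$ itself is generally not Hermitian. That is exactly why the section switches to $FAF^\ast$, a unitary conjugation of a Hermitian matrix.

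If desired, one can also note the entrywise form:
\[
\widehat A_{mn}=\frac1N\sum_{x,y}A_{xy}e^{-2\pi i(mx-ny)/N}.
\]
Swapping $x\leftrightarrow y$ and using $A_{xy}=A_{yx}$ shows directly that $\overline{\widehat A_{nm}}=\widehat A_{mn}$.
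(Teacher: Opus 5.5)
Your proof is correct and is the same as the paper's: take the adjoint $(FAF^\ast)^\ast=FA^\ast F^\ast$ and use $A^\ast=A$ because $A$ is real and symmetric. Your entrywise check and your remark that $FAF$ itself need not be Hermitian are both accurate.
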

\begin{proof}
Since $A^\ast=A$,
\[
\widehat A^\ast=(FAF^\ast)^\ast=FA^\ast F^\ast=FAF^\ast=\widehat A.
\]
\end{proof}

\begin{proposition}[Weighted involution form]\label{prop:weighted-involution}
Let $A$ be an extremizing adjacency matrix and put $H=FAF^\ast$. Then there are a subset $J\subset\mathbb Z_N$, an involution $\tau:J\to J$, and nonzero complex numbers $c_j$, $j\in J$, such that
\[
H_{j,k}=0\quad\text{unless }j\in J\text{ and }k=\tau(j),
\qquad
H_{j,\tau(j)}=c_j.
\]
Moreover,
\[
c_{\tau(j)}=\overline{c_j}.
\]
In particular, $c_j$ is real at every fixed point of $\tau$, while every two-cycle $\{j,\tau(j)\}$ contributes the pair of eigenvalues $|c_j|$ and $-|c_j|$.
\end{proposition}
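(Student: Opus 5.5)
We begin from Theorem~\ref{thm:equality-criterion}. Since $A$ is extremizing, every row and every column of $FAF$ has at most one nonzero entry. By the remark following that theorem, $H=FAF^\ast$ is obtained from $FAF$ by permuting columns, so $H$ satisfies the same support condition. Let $J$ be the set of indices $j$ whose row $j$ of $H$ contains a nonzero entry. For $j\in J$, let $\tau(j)$ denote the unique column index with $H_{j,\tau(j)}\neq0$, and set $c_j=H_{j,\tau(j)}$. By construction, $H_{j,k}=0$ unless $j\in J$ and $k=\tau(j)$. The column condition makes $\tau$ injective on $J$.

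The key step is to show that $\tau$ maps $J$ into itself and is an involution. Here we use Lemma~\ref{lem:hermitian-transform}: since $H$ is Hermitian, $H_{\tau(j),j}=\overline{H_{j,\tau(j)}}=\overline{c_j}\neq0$. Hence row $\tau(j)$ has a nonzero entry, so $\tau(j)\in J$. Because that row has only one nonzero entry, and that entry sits in column $j$, we get $\tau(\tau(j))=j$. Reading off the entry also gives $c_{\tau(j)}=H_{\tau(j),j}=\overline{c_j}$. At a fixed point $j=\tau(j)$ this says $c_j=\overline{c_j}$, so $c_j$ is real.

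For the eigenvalue statement, $H$ is block diagonal up to a simultaneous permutation of rows and columns, which is a unitary conjugation. The blocks are of three kinds: $1\times1$ blocks $(c_j)$ at fixed points of $\tau$; $2\times2$ blocks
\[
\begin{pmatrix}0&c_j\\ \overline{c_j}&0\end{pmatrix}
\]
on each two-cycle $\{j,\tau(j)\}$; and zero on the complement of $J$. Each $2\times2$ block has trace $0$ and determinant $-|c_j|^2$, so its eigenvalues are $\pm|c_j|$. Finally, $H$ is unitarily similar to $A$, so these are adjacency eigenvalues.

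We do not expect a genuine obstacle. The only point that needs care is that the support condition transfers from $FAF$ to $FAF^\ast$, and this follows from the column-permutation relation $F^\ast=FF^2$ already noted in the paper.
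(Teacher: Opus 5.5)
Your proposal is correct and follows essentially the same route as the paper. You transfer the support condition of Theorem~\ref{thm:equality-criterion} to $H=FAF^\ast$, define $J$ and $\tau$ via the unique nonzero entry in each row, use Hermitian symmetry to get $\tau(j)\in J$, $\tau(\tau(j))=j$ and $c_{\tau(j)}=\overline{c_j}$, and read off $\pm|c_j|$ from the $2\times 2$ blocks. You are somewhat more explicit than the paper in justifying the transfer via $F^\ast=FF^2$, in checking that $\tau(j)\in J$, and in noting that $H$ is unitarily similar to $A$.
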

\begin{proof}
By Theorem~\ref{thm:equality-criterion}, every row and column of $H$ contains at most one nonzero entry. Let $J$ be the set of indices of nonzero rows, and define $\tau(j)$ to be the column occupied by the unique nonzero entry in row $j$. Since $H$ is Hermitian, $H_{j,\tau(j)}\neq0$ implies $H_{\tau(j),j}\neq0$. The row--column uniqueness then gives $\tau(\tau(j))=j$, and Hermitian symmetry gives $c_{\tau(j)}=\overline{c_j}$. At a fixed point, the corresponding one-dimensional block is real. A two-cycle gives the block
\[
\begin{pmatrix}0&c_j\\ \overline{c_j}&0\end{pmatrix},
\]
whose eigenvalues are $\pm|c_j|$.
\end{proof}

\begin{corollary}\label{cor:rank-support}
Under an extremizing labeling, the number of nonzero entries of the Fourier transform of the adjacency matrix is exactly $\operatorname{rank}(A)$. Equivalently, equality in the energy bound forces the Fourier support to have the smallest size compatible with the rank.
\end{corollary}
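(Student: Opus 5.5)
The plan is to read off the rank from the weighted involution form in Proposition~\ref{prop:weighted-involution}. Let $A$ be extremizing and put $H=FAF^\ast$. Since $F$ is unitary, $H$ is unitarily similar to $A$, so $\operatorname{rank}(H)=\operatorname{rank}(A)$.

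By Proposition~\ref{prop:weighted-involution}, the nonzero entries of $H$ are exactly $H_{j,\tau(j)}=c_j\neq0$ for $j\in J$. Hence the number of nonzero entries of $H$ is $|J|$. Now choose the permutation $\pi$ extending $\tau$ from $J$ to all of $\mathbb Z_N$, as in the converse direction of Theorem~\ref{thm:equality-criterion}, and let $Q$ be its permutation matrix. Then $HQ$ is diagonal, and its diagonal entries are the $c_j$ ($j\in J$) together with zeros. Its rank is therefore $|J|$, and since $Q$ is invertible, $\operatorname{rank}(H)=|J|$. This gives $\#\operatorname{supp}(H)=\operatorname{rank}(A)$.

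The statement concerns the Fourier transform $FAF$ as well as $FAF^\ast$. As noted before Lemma~\ref{lem:hermitian-transform}, the two differ by the column permutation $F^2$. So they have the same number of nonzero entries, and both have rank equal to $\operatorname{rank}(A)$.

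For the ``equivalently'' clause, I would use the general fact that any matrix $M$ with $k$ nonzero entries has rank at most $k$. This holds because $M$ is a sum of $k$ rank-one matrices $M_{mn}e_me_n^\ast$. Applied to $\widehat A$, whose rank equals $\operatorname{rank}(A)$ by unitary invariance, this shows $\#\operatorname{supp}(\widehat A)\ge\operatorname{rank}(A)$ for every labeling. Equality in the energy bound therefore forces this minimum to be attained.

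There is no real obstacle here. The only point needing care is the rank count, which rests on the fact that at most one nonzero entry per row and per column makes the nonzero entries linearly independent as rank-one pieces. The permutation-to-diagonal step handles this.
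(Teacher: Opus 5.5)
Your proof is correct and follows the paper's route. Both arguments read the support size $|J|$ off the weighted involution form of Proposition~\ref{prop:weighted-involution} and then show $\operatorname{rank}(H)=|J|$: the paper notes that the nonzero columns are linearly independent, and you permute $H$ to a diagonal matrix, which is the same observation. Your remarks on $FAF$ versus $FAF^\ast$, and on the general bound $\operatorname{rank}(M)\le\#\operatorname{supp}(M)$ for the ``equivalently'' clause, are also correct and make explicit points the paper leaves implicit.
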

\begin{proof}
In the weighted involution form, each nonzero row contains exactly one nonzero entry and the corresponding columns are distinct. Thus the nonzero columns are linearly independent, so the rank equals $|J|$, which is also the number of nonzero entries.
\end{proof}

\begin{lemma}[{\cite[Theorem 3.2.3]{Davis1979}}]\label{lem:diagonal-circulant}
    Let $D$ be a diagonal matrix. Then the inverse Fourier transform of $D$, or $F^\ast DF$, is circulant.
\end{lemma}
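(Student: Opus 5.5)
The plan is a direct entrywise computation. With $D=\operatorname{diag}(d_0,\dots,d_{N-1})$ and the unitary Fourier matrix $F_{mx}=N^{-1/2}e^{-2\pi i mx/N}$, I will write out the $(x,y)$ entry of $C:=F^\ast DF$:
\[
C_{xy}=\sum_{m}\overline{F_{mx}}\,d_m\,F_{my}
=\frac1N\sum_{m\in\mathbb Z_N} d_m\, e^{2\pi i m(x-y)/N}.
\]
The only thing to observe is that the right-hand side depends on $x$ and $y$ only through the difference $x-y\pmod N$.

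Set $c(k)=\frac1N\sum_m d_m e^{2\pi i mk/N}$ for $k\in\mathbb Z_N$. This is the inverse discrete Fourier transform of the diagonal sequence, up to normalization. Then $C_{xy}=c(x-y)$. This is exactly the definition of a circulant matrix: each row is the cyclic shift of the previous one, since $C_{x+1,y+1}=C_{xy}$ with indices taken in $\mathbb Z_N$. It is also worth noting the converse, which follows by the same computation read backwards: every circulant matrix is diagonalized by $F$, with eigenvalues $d_m=\sum_k c(k)e^{-2\pi i mk/N}$. I do not need the converse here.

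There is no real obstacle. The only point needing care is bookkeeping: getting the conjugation and the sign in the exponent right, so that one gets $x-y$ rather than $x+y$. With $FDF$ in place of $F^\ast DF$ the entries would depend on $x+y$, giving a "skew" (Hankel-type) circulant. That is precisely why the paper switched to the Hermitian transform $FAF^\ast$ above. In the intended application, one takes $D$ to be an even power of $H=FAF^\ast$. By Proposition~\ref{prop:weighted-involution}, $H^2$ is diagonal, with entries $|c_j|^2$ on $J$. So $A^{2k}=F^\ast H^{2k}F$ is circulant by this lemma.
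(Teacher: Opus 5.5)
Your computation is correct: $(F^\ast DF)_{xy}=\frac1N\sum_m d_m e^{2\pi i m(x-y)/N}$ depends only on $x-y$, which is the circulant condition. Your side remarks also check out: the inversion formula for $d_m$, the $x+y$ dependence of $FDF$, and $H^2$ being diagonal with entries $|c_j|^2$ on $J$.

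The paper gives no proof of its own and simply cites Davis. The standard textbook argument there is structural rather than entrywise. Set $\Omega=\operatorname{diag}(e^{2\pi i m/N})_{m\in\mathbb Z_N}$ and verify once that $P=F^\ast\Omega F$ is the cyclic shift ($P_{x,y}=1$ iff $y=x+1$). By Lagrange interpolation at the distinct $N$-th roots of unity, write $D=p(\Omega)$ for a polynomial $p$ of degree at most $N-1$. Then $F^\ast DF=p(P)$ is a polynomial in the shift, hence circulant.

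Your route is shorter, needs no interpolation, and gives the first row explicitly as the inverse DFT of the diagonal. It also shows that the conjugate on the left is essential. This matters here: under the paper's original convention $\widehat f=FfF$, the literal inverse transform $F^\ast DF^\ast$ has entries depending on $x+y$ (Hankel type). So the lemma really concerns the Hermitian transform $FAF^\ast$.

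The interpolation route instead identifies the matrices $F^\ast DF$ with the algebra generated by the shift. The converse (every circulant is diagonalized by $F$) then comes from the same picture, at the cost of hiding the explicit entries.
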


\begin{proposition}\label{prop:spectral-decomposition}
Let $G$ attain equality in Theorem~\ref{thm:energy-bound}, and let $A$ be an adjacency matrix under a labeling that attains equality. Then there are matrices $B$ and $C$ such that
\begin{itemize}
\item $A=B+C$;
\item $B$ is circulant;
\item the spectrum of $C$ is symmetric about $0$;
\item $C^2$ is circulant;
\item the column spaces of $B$ and $C$ are orthogonal.
\end{itemize}
\end{proposition}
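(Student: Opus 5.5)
The plan is to split the Hermitian transform $H=FAF^\ast$ according to the weighted involution form of Proposition~\ref{prop:weighted-involution}, and then transport the pieces back to $A$ with the inverse Fourier transform. Let $J$, $\tau$ and $c_j$ be as in that proposition. Write $J_0\subset J$ for the fixed points of $\tau$ and $J_1=J\setminus J_0$ for the union of the two-cycles. Set
\[
H_B=\sum_{j\in J_0}c_j e_je_j^\ast,\qquad H_C=\sum_{j\in J_1}c_j e_je_{\tau(j)}^\ast ,
\]
so that $H=H_B+H_C$. Since $A=F^\ast HF$, we then define $B=F^\ast H_BF$ and $C=F^\ast H_CF$, which gives $A=B+C$ at once.

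The remaining properties are checked as follows.

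\emph{$B$ is circulant.} $H_B$ is diagonal, so $B$ is circulant by Lemma~\ref{lem:diagonal-circulant}.

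\emph{Spectrum of $C$.} $C$ is unitarily similar to $H_C$. The matrix $H_C$ is a direct sum (after a permutation similarity) of the $2\times2$ blocks $\begin{pmatrix}0&c_j\\ \overline{c_j}&0\end{pmatrix}$, one for each two-cycle, together with zeros. By Proposition~\ref{prop:weighted-involution} each block has eigenvalues $\pm|c_j|$, so the spectrum of $C$ is symmetric about $0$.

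\emph{$C^2$ is circulant.} We have $C^2=F^\ast H_C^2F$. Squaring a two-cycle block gives $|c_j|^2 I_2$ on the coordinates $\{j,\tau(j)\}$, so $H_C^2$ is diagonal. Lemma~\ref{lem:diagonal-circulant} then shows $C^2$ is circulant.

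\emph{Orthogonal column spaces.} The column space of $H_B$ is spanned by $\{e_j: j\in J_0\}$. The column space of $H_C$ is spanned by $\{e_j : j\in J_1\}$, since $\tau$ permutes $J_1$. These two spans are orthogonal because $J_0$ and $J_1$ are disjoint. The column space of $B=F^\ast H_B F$ is $F^\ast$ applied to the column space of $H_B$, since $F$ is invertible, and likewise for $C$. Because $F^\ast$ is unitary, it preserves this orthogonality, so the column spaces of $B$ and $C$ are orthogonal.

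The only step needing care is this last one: one must confirm that the range of $H_C$ really is $\operatorname{span}\{e_j:j\in J_1\}$. This holds because each $e_{\tau(j)}$ with $j\in J_1$ is mapped to a nonzero multiple of $e_j$, and $\tau$ is a bijection of $J_1$. Everything else is a direct consequence of Proposition~\ref{prop:weighted-involution} and Lemma~\ref{lem:diagonal-circulant}.
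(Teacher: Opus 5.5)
Your proposal is correct and is essentially the paper's proof: your $H_B$ and $H_C$ are exactly the diagonal part $S$ and the off-diagonal part $T=H-S$ of $H=FAF^\ast$, and your arguments for circulancy of $B$, the spectral symmetry of $C$, and circulancy of $C^2$ match the paper's. The only cosmetic difference is the last step: you identify the ranges of $H_B$ and $H_C$ as coordinate spans over the disjoint sets $J_0$ and $J_1$, whereas the paper deduces orthogonality of the column spaces from $ST=TS=0$ together with Hermitian symmetry.
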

\begin{proof}
Let $H=FAF^\ast$. By Lemma~\ref{lem:hermitian-transform}, $H$ is Hermitian, and by Theorem~\ref{thm:equality-criterion}, every row and column of $H$ contains at most one nonzero entry. Let $S$ be the diagonal part of $H$ and put $T=H-S$. Define
\[
B=F^\ast SF,
\qquad
C=F^\ast TF.
\]
Then $A=B+C$, and $B$ is circulant by Lemma~\ref{lem:diagonal-circulant}.

The matrix $T$ is Hermitian, has zero diagonal, and has at most one nonzero entry in each row and column. After a simultaneous permutation of rows and columns, it is therefore a direct sum of zero blocks and blocks of the form
\[
\begin{pmatrix}
0&z\\
\overline z&0
\end{pmatrix}.
\]
Its spectrum, and hence the spectrum of $C$, is symmetric about zero. The same block description shows that $T^2$ is diagonal. Thus
\[
C^2=F^\ast T^2F
\]
is circulant.

Finally, the support condition gives $ST=TS=0$: whenever $S_{ii}\neq0$, the diagonal entry is the unique nonzero entry in both row $i$ and column $i$ of $H$. Since $S$ and $T$ are Hermitian, their column spaces are orthogonal. Right multiplication by the invertible matrix $F$ does not change the column space, and left multiplication by $F^\ast$ applies the same unitary map to both column spaces. Hence the column spaces of $B$ and $C$ are orthogonal.
\end{proof}

We next formulate a combinatorial consequence of equality.

\begin{definition}
Let $G$ be a graph with a vertex labeling $\sigma:V(G)\to\mathbb Z_N$, and let $p_\ell(x,y)$ denote the number of walks of length $\ell$ from the vertex labeled $x$ to the vertex labeled $y$. For an integer $q\geq0$, we call $(G,\sigma)$ walk-circulant at length $q$ if
\[
p_q(x,y)=p_q(x+1,y+1)
\qquad\text{for all }x,y\in\mathbb Z_N.
\]
\end{definition}

\begin{theorem}\label{thm:square-circulant}
Let $G$ attain equality in Theorem~\ref{thm:energy-bound}, and let $A_\sigma$ be an adjacency matrix under a labeling that attains equality. Then $A_\sigma^2$ is circulant.
\end{theorem}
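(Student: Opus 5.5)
The plan is to pass to the Hermitian transform and show that it squares to a diagonal matrix. Put $H=FA_\sigma F^\ast$; by Lemma~\ref{lem:hermitian-transform} it is Hermitian. Since $FA_\sigma F^\ast$ and $FA_\sigma F$ differ only by a column permutation, Theorem~\ref{thm:equality-criterion} applied to the extremizing labeling $\sigma$ shows that every row and every column of $H$ has at most one nonzero entry. Proposition~\ref{prop:weighted-involution} then gives a set $J$, an involution $\tau$ on $J$, and nonzero weights $c_j$ with $c_{\tau(j)}=\overline{c_j}$ and $H_{j,k}=c_j\,\mathbf 1_{k=\tau(j)}$.

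The key step is computing $H^2$. We have
\[
(H^2)_{j,k}=\sum_{l}H_{j,l}H_{l,k}.
\]
This sum is nonzero only if $j\in J$, $l=\tau(j)$ and $k=\tau(l)=\tau(\tau(j))=j$. So $H^2$ is diagonal, with
\[
(H^2)_{j,j}=c_jc_{\tau(j)}=c_j\overline{c_j}=|c_j|^2
\]
for $j\in J$, and $0$ off $J$. Alternatively one can use Proposition~\ref{prop:spectral-decomposition}: the cross terms $ST$ and $TS$ vanish, $S^2$ is diagonal, and $T^2$ is diagonal.

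Next, undo the transform. Since $F$ is unitary, $A_\sigma=F^\ast HF$, and therefore
\[
A_\sigma^2=F^\ast H F F^\ast H F=F^\ast H^2F.
\]
This is the inverse Fourier transform of a diagonal matrix, so Lemma~\ref{lem:diagonal-circulant} shows that $A_\sigma^2$ is circulant. I do not expect a genuine obstacle. The only points needing care are the convention switch from $FAF$ to $FAF^\ast$, which is harmless because it is a column permutation and so preserves the support condition, and the involution property $\tau^2=\mathrm{id}$, which is exactly what forces $H^2$ to be diagonal.
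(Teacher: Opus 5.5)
Your proof is correct and follows essentially the same route as the paper: you pass to the Hermitian transform $H=FA_\sigma F^\ast$, show that $H^2$ is diagonal using the at-most-one-nonzero-per-row-and-column condition together with Hermitian symmetry, and conclude via $A_\sigma^2=F^\ast H^2F$ and Lemma~\ref{lem:diagonal-circulant}. The only cosmetic difference is that the paper argues directly that distinct rows of $H$ have disjoint supports, so $(H^2)_{ij}=\sum_k H_{ik}\overline{H_{jk}}=0$ for $i\neq j$, whereas you express the same fact through the involution $\tau$ of Proposition~\ref{prop:weighted-involution}.
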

\begin{proof}
Put $H=FA_\sigma F^\ast$. The matrix $H$ is Hermitian, and every row and column contains at most one nonzero entry. If $i\neq j$, the supports of rows $i$ and $j$ are disjoint, because each column contains at most one nonzero entry. Therefore
\[
(H^2)_{ij}
=\sum_{k\in\mathbb Z_N}H_{ik}H_{kj}
=\sum_{k\in\mathbb Z_N}H_{ik}\overline{H_{jk}}
=0.
\]
Thus $H^2$ is diagonal. Since
\[
H^2=FA_\sigma^2F^\ast,
\]
we have $A_\sigma^2=F^\ast H^2F$, which is circulant by Lemma~\ref{lem:diagonal-circulant}.
\end{proof}

\begin{lemma}\label{lem:walk-circulant}
For a vertex labeling $\sigma$ of a graph $G$, the pair $(G,\sigma)$ is walk-circulant at length $q$ if and only if $A_\sigma^q$ is circulant.
\end{lemma}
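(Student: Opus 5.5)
The plan is to identify both sides of the equivalence with a statement about the entries of \(A_\sigma^q\). The key fact is that the \((x,y)\) entry of the \(q\)-th power of an adjacency matrix counts walks of length \(q\) from \(x\) to \(y\). Concretely, I will show by induction on \(q\) that
\[
(A_\sigma^q)_{x,y}=p_q(x,y)\qquad\text{for all }x,y\in\mathbb Z_N .
\]
For \(q=0\) both sides equal the Kronecker delta, since the only walk of length \(0\) is the trivial walk at a vertex. For the inductive step I will expand
\[
(A_\sigma^{q+1})_{x,y}=\sum_{z\in\mathbb Z_N}(A_\sigma^q)_{x,z}(A_\sigma)_{z,y}.
\]
Here \((A_\sigma)_{z,y}\in\{0,1\}\) indicates whether \(z\) is adjacent to \(y\). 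A walk of length \(q+1\) from \(x\) to \(y\) decomposes uniquely as a walk of length \(q\) from \(x\) to some penultimate vertex \(z\), followed by the edge \(zy\). Summing over \(z\) therefore gives \(p_{q+1}(x,y)\).

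Next I will recall the definition of a circulant matrix. An \(N\times N\) matrix \(M\) indexed by \(\mathbb Z_N\) is circulant if \(M_{x,y}\) depends only on \(y-x \bmod N\). This is equivalent to \(M_{x,y}=M_{x+1,y+1}\) for all \(x,y\in\mathbb Z_N\). One direction is immediate. For the other, iterating the shift gives \(M_{x,y}=M_{0,y-x}\), because indices are taken mod \(N\).

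Combining the two facts, the pair \((G,\sigma)\) is walk-circulant at length \(q\) exactly when \((A_\sigma^q)_{x,y}=(A_\sigma^q)_{x+1,y+1}\) for all \(x,y\), that is, exactly when \(A_\sigma^q\) is circulant. The argument is essentially routine. The only point needing care is the base case \(q=0\), where one should note that \(A_\sigma^0=I\) is circulant and that \(p_0\) is the delta function. A second, minor point is checking that the shift-invariance formulation of circulancy matches whatever definition underlies Lemma~\ref{lem:diagonal-circulant}; I will verify that the two agree. Both directions of the equivalence then follow at once.
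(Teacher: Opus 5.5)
Your proposal is correct and takes the same approach as the paper. The paper's proof simply notes that $(A_\sigma^q)_{x,y}$ counts walks of length $q$ from $x$ to $y$, so the walk-circulant identity is exactly the translation identity defining a circulant matrix. Your induction on $q$ and your check that $M_{x,y}=M_{x+1,y+1}$ for all $x,y$ is equivalent to $M_{x,y}$ depending only on $y-x$ fill in details the paper leaves implicit.
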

\begin{proof}
The $(a,b)$-entry of $A_\sigma^q$ is the number of walks of length $q$ from the vertex labeled $a$ to the vertex labeled $b$. Thus the walk-circulant condition is exactly the defining translation identity for a circulant matrix.
\end{proof}

\begin{corollary}\label{cor:even-walk-circulant}
Let $G$ attain equality in Theorem~\ref{thm:energy-bound}, and let $\sigma$ be a labeling attaining equality. Then $(G,\sigma)$ is walk-circulant at length $2q$ for every positive integer $q$.
\end{corollary}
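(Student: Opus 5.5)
The plan is to reduce the claim to Lemma~\ref{lem:walk-circulant}: it suffices to show that $A_\sigma^{2q}$ is circulant for every positive integer $q$. We could argue directly from Theorem~\ref{thm:square-circulant}, since $A_\sigma^{2q}=(A_\sigma^2)^q$ and products of circulant matrices are circulant. To stay inside the Fourier framework already set up, however, we instead argue through $H=FA_\sigma F^\ast$.

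Put $H=FA_\sigma F^\ast$. The proof of Theorem~\ref{thm:square-circulant} shows that $H^2$ is diagonal, so $H^{2q}=(H^2)^q$ is diagonal for every $q\geq1$. Because $F$ is unitary, the factors $F^\ast F$ cancel and
\[
H^{2q}=FA_\sigma^{2q}F^\ast .
\]
Hence $A_\sigma^{2q}=F^\ast H^{2q}F$, and Lemma~\ref{lem:diagonal-circulant} shows that this matrix is circulant.

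Finally, apply Lemma~\ref{lem:walk-circulant} with length $2q$. It says that $A_\sigma^{2q}$ being circulant is equivalent to $(G,\sigma)$ being walk-circulant at length $2q$, which is the conclusion.

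No step is a real obstacle. The only point needing care is the identity $H^{2q}=FA_\sigma^{2q}F^\ast$, which uses the unitarity of $F$; it is immediate.
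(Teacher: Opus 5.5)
Your proposal is correct and follows essentially the same route as the paper: the paper writes $A_\sigma^2=F^\ast DF$ with $D$ diagonal (your $D=H^2$), takes $q$-th powers to get $A_\sigma^{2q}=F^\ast D^qF$, and concludes with Lemma~\ref{lem:diagonal-circulant} and Lemma~\ref{lem:walk-circulant}. Your remark that $(A_\sigma^2)^q$ is circulant because products of circulants are circulant is also valid.
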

\begin{proof}
Write $A_\sigma^2=F^\ast DF$, where $D$ is diagonal. Then
\[
A_\sigma^{2q}=F^\ast D^qF,
\]
which is circulant by Lemma~\ref{lem:diagonal-circulant}. The result follows from Lemma~\ref{lem:walk-circulant}.
\end{proof}

\begin{corollary}\label{cor:regularity}
Every graph attaining equality in Theorem~\ref{thm:energy-bound} is regular.
\end{corollary}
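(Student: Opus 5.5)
The idea is to read vertex degrees off the diagonal of $A_\sigma^2$ and then use Theorem~\ref{thm:square-circulant}. Fix a labeling $\sigma$ attaining equality in Theorem~\ref{thm:energy-bound}. For a simple graph, the $(x,x)$-entry of $A_\sigma^2$ counts closed walks of length two starting at the vertex labeled $x$. Each such walk goes out along an edge and returns along it, so
\[
(A_\sigma^2)_{xx}=\sum_{y}A_\sigma(x,y)^2=\sum_y A_\sigma(x,y)=\deg(\sigma^{-1}(x)),
\]
using that the entries are $0$ or $1$ and the diagonal of $A_\sigma$ is zero.

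By Theorem~\ref{thm:square-circulant}, $A_\sigma^2$ is circulant. Equivalently, by Corollary~\ref{cor:even-walk-circulant} with $q=1$ and Lemma~\ref{lem:walk-circulant}, $p_2(x,y)=p_2(x+1,y+1)$ for all $x,y$. Taking $y=x$ gives $(A_\sigma^2)_{xx}=(A_\sigma^2)_{x+1,x+1}$ for every $x\in\mathbb Z_N$. Iterating along $\mathbb Z_N$, all diagonal entries coincide, so every vertex has the same degree. Hence $G$ is regular, with common degree $2s/N$.

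No step is a real obstacle. The only point to check is the identification of the diagonal of $A^2$ with the degrees, which relies on $G$ being simple: symmetric $0/1$ adjacency matrix with no loops. Alternatively, one could argue in the Fourier domain: a circulant $A^2=F^\ast DF$ has constant diagonal, equal to $\operatorname{tr}(D)/N$. The walk-counting argument is more direct.
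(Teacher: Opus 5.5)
Your proof is correct and matches the paper's argument: by Theorem~\ref{thm:square-circulant} the matrix $A_\sigma^2$ is circulant, so its diagonal entries are all equal, and for a simple graph these entries are exactly the vertex degrees. Your explicit check that this last identification uses simplicity ($0/1$ symmetric entries, zero diagonal) fills in the only detail the paper leaves implicit.
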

\begin{proof}
Choose an extremizing adjacency matrix $A_\sigma$. By Theorem~\ref{thm:square-circulant}, $A_\sigma^2$ is circulant, so its diagonal entries are equal. These entries are precisely the vertex degrees.
\end{proof}

\begin{corollary}\label{cor:common-neighbors}
Let $A_\sigma$ be an extremizing adjacency matrix. There is a function $\gamma:\mathbb Z_N\to\mathbb Z_{\geq0}$ such that
\[
(A_\sigma^2)_{x,y}=\gamma(y-x)
\qquad\text{for all }x,y\in\mathbb Z_N.
\]
Thus, for distinct vertices, the number of common neighbors depends only on the difference of their labels. In particular, the degree is $\gamma(0)$.
\end{corollary}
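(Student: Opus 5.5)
This follows from Theorem~\ref{thm:square-circulant} together with the definition of a circulant matrix and the combinatorial meaning of $A_\sigma^2$.

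By Theorem~\ref{thm:square-circulant}, $A_\sigma^2$ is circulant. Concretely, $(A_\sigma^2)_{x,y}=(A_\sigma^2)_{x+1,y+1}$ for all $x,y\in\mathbb Z_N$. Iterating this identity $t$ times gives $(A_\sigma^2)_{x,y}=(A_\sigma^2)_{x+t,y+t}$ for every $t\in\mathbb Z_N$. Define
\[
\gamma(d)=(A_\sigma^2)_{0,d}.
\]
Taking $t=-x$ then yields $(A_\sigma^2)_{x,y}=(A_\sigma^2)_{0,y-x}=\gamma(y-x)$.

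Since $A_\sigma$ has entries in $\{0,1\}$, so does its square only up to counting: the entry is
\[
(A_\sigma^2)_{x,y}=\sum_z (A_\sigma)_{x,z}(A_\sigma)_{z,y},
\]
a sum of products of $0$'s and $1$'s. Hence $\gamma$ takes values in $\mathbb Z_{\geq0}$. This sum counts the vertices $z$ adjacent to both $x$ and $y$. For $x\neq y$ it is therefore the number of common neighbors, which depends only on $y-x$.

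For $x=y$, the graph is simple and $A_\sigma$ is symmetric, so $(A_\sigma)_{x,z}^2=(A_\sigma)_{x,z}$. Thus $(A_\sigma^2)_{x,x}=\sum_z (A_\sigma)_{x,z}=\deg(x)$. This equals $\gamma(0)$ for every $x$, which also recovers Corollary~\ref{cor:regularity}.

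There is no genuine obstacle here. The only care needed is in passing from the circulant property to a function of the difference $y-x$, which is the iteration of the shift identity described above.
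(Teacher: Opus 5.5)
Your proposal is correct and follows the paper's proof: circulancy of $A_\sigma^2$ from Theorem~\ref{thm:square-circulant}, read entrywise as a function of $y-x$, with the off-diagonal entries counting common neighbors and the diagonal entries giving the degree. You also write out the shift-iteration step and the integrality and degree computations, which the paper leaves implicit.
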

\begin{proof}
This is the entrywise characterization of the circulant matrix $A_\sigma^2$ supplied by Theorem~\ref{thm:square-circulant}. For $x\neq y$, the entry $(A_\sigma^2)_{x,y}$ counts common neighbors of the vertices labeled $x$ and $y$.
\end{proof}

We next record a restriction on the connected components of an equality-attaining graph.

\begin{proposition}\label{prop:components}
Let $G$ attain equality in Theorem~\ref{thm:energy-bound}. Then there is a positive integer $v$ such that every connected component $C$ of $G$ is of one of the following two types:
\begin{itemize}
\item $C$ is nonbipartite and has $v$ vertices;
\item $C$ is bipartite, has $2v$ vertices, and its two bipartition classes both have $v$ vertices.
\end{itemize}
\end{proposition}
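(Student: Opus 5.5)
The plan is to read off the component structure from the fact that $A_\sigma^2$ is circulant. Specifically, I will apply this to the orthogonal projector onto the top eigenspace of $A_\sigma^2$, which is itself circulant and whose diagonal encodes the sizes of certain vertex blocks.

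First, fix an extremizing labeling $\sigma$ and write $A=A_\sigma$. By Corollary~\ref{cor:regularity}, $G$ is $k$-regular, and $k\geq1$ since $s>0$. By Theorem~\ref{thm:square-circulant}, $A^2$ is circulant, so $A^2=F^\ast DF$ with $D$ diagonal. Let $P$ be the orthogonal projector onto the $k^2$-eigenspace of $A^2$. Then $P=F^\ast D'F$, where $D'$ is the diagonal $0/1$ matrix selecting the entries of $D$ equal to $k^2$. By Lemma~\ref{lem:diagonal-circulant}, $P$ is circulant, and in particular all diagonal entries $P_{xx}$ are equal.

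Second, I will compute $P$ combinatorially. Call a \emph{block} either a nonbipartite component of $G$ or one bipartition class of a bipartite component. The blocks partition $V$, and $A^2$ is block diagonal with respect to them, because walks of length two stay inside a component and preserve the side of a bipartition. On each block $S$, the restriction $A^2|_S$ is nonnegative with all row sums $k^2$. It is also irreducible: in a connected nonbipartite graph any two vertices are joined by a walk of even length, and in a connected bipartite graph any two vertices on the same side are. By Perron--Frobenius, $k^2$ is then a simple eigenvalue of $A^2|_S$ with eigenvector $\mathbf 1_S$. Since $\|A^2\|\leq k^2$ by regularity, $k^2$ is the top eigenvalue, and the $k^2$-eigenspace of $A^2$ is spanned by the orthogonal vectors $\mathbf 1_S$. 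Hence
\[
P=\sum_{S}\frac{1}{|S|}\mathbf 1_S\mathbf 1_S^{T},\qquad P_{xx}=\frac1{|S(x)|},
\]
where $S(x)$ is the block containing $x$.

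Third, combining the two descriptions of $P$: constancy of the diagonal forces every block to have the same size $v$. Thus each nonbipartite component has $v$ vertices, and each bipartite component has two classes of size $v$ each, hence $2v$ vertices. (Regularity would already force the two classes to have equal size via edge counting, but the argument gives this directly.)

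The main obstacle is the second step, namely identifying the $k^2$-eigenspace of $A^2$ exactly. This requires the irreducibility of $A^2$ on each block, which follows from the even-walk connectivity facts, together with the observation that $k^2$ is attained only through the Perron vectors. The first and third steps are routine given Theorem~\ref{thm:square-circulant} and Lemma~\ref{lem:diagonal-circulant}.
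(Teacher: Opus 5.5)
Your argument is correct, and it reaches the conclusion by a different mechanism from the paper's proof.

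\textbf{The paper's route.} It works with the zero--nonzero pattern of high even powers. By Corollary~\ref{cor:even-walk-circulant} every $A^{2q}$ is circulant, and backtracking shows that for all large $q$ one has $(A^{2q})_{xy}>0$ exactly when $x$ and $y$ are joined by an even walk. The even-walk classes are therefore the components of a circulant graph on $\mathbb Z_N$. These components are cosets of a single subgroup, so they all have the same size.

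\textbf{Your route.} You pass to the orthogonal projector $P$ onto the $k^2$-eigenspace of $A^2$. It is circulant because it equals $F^\ast D'F$. Perron--Frobenius on the irreducible diagonal blocks of $A^2$ identifies $P=\sum_S |S|^{-1}\mathbf 1_S\mathbf 1_S^{T}$, and you read the block sizes off the constant diagonal $P_{xx}=1/|S(x)|$.

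\textbf{How the two compare.}
\begin{itemize}
\item The two routes are closely linked. Since $A^2/k^2$ has spectrum in $[0,1]$, $P$ is the limit of $(A^2/k^2)^q$. In effect you take the paper's large-$q$ limit quantitatively, using entry values, rather than qualitatively, using only supports.
\item The paper's argument is purely combinatorial and avoids Perron--Frobenius. It also exhibits the blocks directly as cosets of a subgroup of $\mathbb Z_N$ under the extremizing labeling. Your $P$ recovers this too, because the support of a circulant matrix is translation invariant.
\item Your argument uses Theorem~\ref{thm:square-circulant} directly through the functional calculus and gives extra spectral information. The number of blocks equals the multiplicity of $k^2$ as an eigenvalue of $A^2$. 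Equivalently, it is the number of indices $j$ with $|c_j|=k$ in Proposition~\ref{prop:weighted-involution}, since $FA^2F^\ast$ is diagonal with entries $|c_j|^2$. Hence $v$ is $N$ divided by that count.
\end{itemize}
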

\begin{proof}
Choose a labeling $\sigma$ attaining equality and write $A=A_\sigma$. Since $G$ has at least one edge and is regular by Corollary~\ref{cor:regularity}, every vertex has positive degree.

Declare two vertices equivalent if there is an even-length walk between them. In a connected nonbipartite component there is a single equivalence class, whereas in a connected bipartite component the two equivalence classes are precisely the two bipartition classes.

For every equivalent pair $x,y$, choose an even walk from $x$ to $y$, of length $2r(x,y)$. Because every vertex has positive degree, a backtrack at the endpoint extends such a walk by two steps. There are only finitely many equivalent pairs, so for all sufficiently large $q$ there is a walk of length exactly $2q$ from $x$ to $y$ if and only if $x$ and $y$ are equivalent.

By Corollary~\ref{cor:even-walk-circulant}, $A^{2q}$ is circulant, so its zero--nonzero pattern is also circulant. Consider the graph on $\mathbb Z_N$ in which distinct vertices $x$ and $y$ are adjacent when $(A^{2q})_{xy}>0$. Its connected components are exactly the equivalence classes above. For completeness, if $K=\{t:(A^{2q})_{0t}>0\}$, the vertices connected to $0$ form the subgroup of $\mathbb Z_N$ generated by $K$; all other components are its cosets. Thus all equivalence classes have the same cardinality, say $v$, and the stated alternatives follow.
\end{proof}

We next give a consequence for Laplacian spectral projectors.

\begin{proposition}\label{prop:spectral-projector}
Let $G$ have a labeling $\sigma$ attaining equality in Theorem~\ref{thm:energy-bound}. Let $a$ be the common vertex degree, let $L=aI-A_\sigma$ be the Laplacian, and let $\mu$ be an eigenvalue of $A_\sigma$. Suppose that $|\mu|$ is different from the magnitude of every nonzero off-diagonal entry of $\widehat{A_\sigma}=FA_\sigma F^\ast$. Then $\lambda=a-\mu$ is an eigenvalue of $L$ and
\[
\operatorname{FR}_{\min}(\Pi_\lambda)=\sqrt{m(\lambda)}.
\]
\end{proposition}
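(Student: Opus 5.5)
Since $L=aI-A_\sigma$ and the identity commutes with everything, $\lambda=a-\mu$ is a Laplacian eigenvalue with the same eigenspace as $\mu$ for $A_\sigma$. The projector $\Pi_{\lambda,\sigma}$ therefore equals the spectral projector $P_\mu$ of $A_\sigma$ onto its $\mu$-eigenspace. Theorem~\ref{thm:projector-bound} already gives $\operatorname{FR}_{\min}(\Pi_\lambda)\ge\sqrt{m(\lambda)}$. It thus suffices to show that, under the fixed labeling $\sigma$, the ratio $\|\widehat{P_\mu}\|_1/\|\widehat{P_\mu}\|_2$ equals $\sqrt{m(\lambda)}$. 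As noted before Lemma~\ref{lem:hermitian-transform}, we may compute with the Hermitian transform $FP_\mu F^\ast$ instead of $FP_\mu F$, since the two have the same norms.

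The plan is to compute $FP_\mu F^\ast$ from the weighted involution form of Proposition~\ref{prop:weighted-involution}. Write $H=FA_\sigma F^\ast$. Then $FP_\mu F^\ast$ is the spectral projector of $H$ onto its $\mu$-eigenspace. After a simultaneous permutation, $H$ is block diagonal:
\begin{itemize}
\item a $1\times1$ block $(c_j)$ for each fixed point $j$ of $\tau$;
\item a $2\times2$ block $\begin{pmatrix}0&c_j\\ \overline{c_j}&0\end{pmatrix}$ for each two-cycle;
\item zero blocks for indices outside $J$.
\end{itemize}
The spectral projector of a block diagonal Hermitian matrix is the direct sum of the spectral projectors of the blocks. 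The eigenvalues of a two-cycle block are $\pm|c_j|$. By hypothesis $|\mu|\neq|c_j|$ for every off-diagonal entry, so no two-cycle block contributes to the $\mu$-eigenspace. Hence the only contributions come from diagonal entries of $H$: the fixed points $j$ with $c_j=\mu$, together with the indices outside $J$ when $\mu=0$. (When $\mu=0$ the hypothesis requires every $|c_j|$ for a two-cycle to be nonzero, which holds automatically.)

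Consequently $FP_\mu F^\ast$ is a diagonal $0/1$ matrix with exactly $m=m(\lambda)$ ones. Its $\ell^1$-norm is $m$ and its Frobenius norm is $\sqrt m$, so the Fourier ratio is $\sqrt m$ under $\sigma$. This gives $\operatorname{FR}_{\min}(\Pi_\lambda)\le\sqrt{m(\lambda)}$, and with the reverse inequality we get equality.

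The main care point is the middle step. It must be justified that the spectral projector respects the block decomposition, and that the hypothesis exactly rules out mixing with the $2\times2$ blocks. Otherwise the projector would contain entries like $\tfrac12\begin{pmatrix}1&c/|c|\\ \overline c/|c|&1\end{pmatrix}$, whose contribution to the ratio exceeds the diagonal case.
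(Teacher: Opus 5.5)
Your proposal is correct and follows the paper's proof essentially step for step. You identify $\Pi_{\lambda,\sigma}$ with the adjacency spectral projector, and you split the Hermitian transform into $1\times1$ and $2\times2$ blocks, with the hypothesis excluding the $2\times2$ blocks so that the transformed projector is a diagonal $0/1$ matrix with $m(\lambda)$ ones. You then close with the lower bound of Theorem~\ref{thm:projector-bound}; your explicit handling of the zero blocks outside $J$ when $\mu=0$ is a minor clarification the paper leaves implicit.
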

\begin{proof}
Let $\Phi_{\mu,\sigma}$ be the orthogonal projector onto the $\mu$-eigenspace of $A_\sigma$, and let $m(\mu)$ denote the multiplicity of $\mu$. Since $L=aI-A_\sigma$, the $\lambda$-eigenspace of $L$, where $\lambda=a-\mu$, is exactly the $\mu$-eigenspace of $A_\sigma$. Thus
\[
\Pi_{\lambda,\sigma}=\Phi_{\mu,\sigma},
\qquad
m(\lambda)=m(\mu).
\]

The matrix $\widehat{A_\sigma}$ is Hermitian and has at most one nonzero entry in each row and column. After a simultaneous permutation of its rows and columns, it is therefore block diagonal with one-dimensional blocks and two-dimensional blocks of the form
\[
\begin{pmatrix}
0&b\\
\overline b&0
\end{pmatrix},
\qquad b\neq0.
\]
Each two-dimensional block contributes the eigenvalues $|b|$ and $-|b|$. By the hypothesis on $\mu$, the $\mu$-eigenspace is supported entirely on one-dimensional blocks. Consequently the spectral projector $\Psi_{\mu,\sigma}$ of $\widehat{A_\sigma}$ is diagonal, with exactly $m(\mu)$ diagonal entries equal to $1$ and all other entries equal to $0$.

Unitary conjugation gives
\[
\Psi_{\mu,\sigma}=F\Phi_{\mu,\sigma}F^\ast
=\widehat{\Phi_{\mu,\sigma}}.
\]
As noted above, replacing $F\Phi_{\mu,\sigma}F$ by $F\Phi_{\mu,\sigma}F^\ast$ only permutes the columns and therefore does not change the Fourier ratio. It follows that
\[
\operatorname{FR}(\Pi_{\lambda,\sigma})
=\operatorname{FR}(\Phi_{\mu,\sigma})
=\frac{\|\Psi_{\mu,\sigma}\|_1}{\|\Psi_{\mu,\sigma}\|_2}
=\frac{m(\mu)}{\sqrt{m(\mu)}}
=\sqrt{m(\lambda)}.
\]
The lower bound in Theorem~\ref{thm:projector-bound} proves that this labeling is minimizing.
\end{proof}

\begin{corollary}\label{cor:spectral-projector}
Let $G$ have a labeling $\sigma$ attaining equality in Theorem~\ref{thm:energy-bound}, let $a$ be its common vertex degree, and let $\Lambda$ be its Laplacian spectrum. If $\lambda\in\Lambda$ and $2a-\lambda\notin\Lambda$, then
\[
\operatorname{FR}(\Pi_{\lambda,\sigma})
=\operatorname{FR}_{\min}(\Pi_\lambda)
=\sqrt{m(\lambda)}.
\]
\end{corollary}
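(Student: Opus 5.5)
The plan is to reduce the corollary to Proposition~\ref{prop:spectral-projector}. Write $\mu=a-\lambda$. Since $L=aI-A_\sigma$, $\mu$ is an eigenvalue of $A_\sigma$. Its multiplicity equals $m(\lambda)$, and the hypothesis $2a-\lambda\notin\Lambda$ says exactly that $-\mu=a-(2a-\lambda)$ is \emph{not} an eigenvalue of $A_\sigma$. It therefore suffices to verify the hypothesis of Proposition~\ref{prop:spectral-projector}: $|\mu|$ differs from $|b|$ for every nonzero off-diagonal entry $b$ of $\widehat{A_\sigma}=FA_\sigma F^\ast$.

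For this I use the block structure from Proposition~\ref{prop:weighted-involution}. After a simultaneous permutation, $\widehat{A_\sigma}$ is a direct sum of $1\times1$ real blocks and $2\times2$ blocks $\begin{pmatrix}0&b\\ \overline b&0\end{pmatrix}$ with $b\neq0$. Each nonzero off-diagonal entry $b$ belongs to such a $2\times2$ block. That block contributes both eigenvalues $|b|$ and $-|b|$ to the spectrum of $\widehat{A_\sigma}$, which is the spectrum of $A_\sigma$ by unitary conjugation.

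Suppose, for contradiction, that $|\mu|=|b|$ for such an entry. Then both $\mu$ and $-\mu$ lie in $\{|b|,-|b|\}$, so both are eigenvalues of $A_\sigma$. In particular $-\mu$ is an eigenvalue, so $a-(-\mu)=2a-\lambda\in\Lambda$, contradicting the hypothesis. The case $\mu=0$ is harmless: then $\lambda=a=2a-\lambda$, so the hypothesis already fails and the case does not occur.

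With the hypothesis verified, Proposition~\ref{prop:spectral-projector} gives that the Fourier transform of $\Pi_{\lambda,\sigma}$ is a diagonal $0/1$ matrix with $m(\lambda)$ ones. Hence $\operatorname{FR}(\Pi_{\lambda,\sigma})=\sqrt{m(\lambda)}$. The lower bound in Theorem~\ref{thm:projector-bound} then gives $\operatorname{FR}_{\min}(\Pi_\lambda)=\sqrt{m(\lambda)}$ as well, so all three quantities coincide.

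The only step needing care is the translation between Laplacian and adjacency spectra. With $L=aI-A_\sigma$, the reflection $\mu\mapsto-\mu$ becomes $\lambda\mapsto 2a-\lambda$. I do not expect a genuine obstacle beyond checking this correspondence and the $\mu=0$ edge case.
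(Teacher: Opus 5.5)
Your proposal is correct and follows the paper's proof. Like the paper, you set $\mu=a-\lambda$, note that $-\mu$ is not an adjacency eigenvalue, use the $2\times2$ Hermitian blocks contributing $\pm|b|$ to rule out $|\mu|=|b|$, and then invoke Proposition~\ref{prop:spectral-projector}; your explicit treatment of $\mu=0$ and of $\operatorname{FR}(\Pi_{\lambda,\sigma})=\sqrt{m(\lambda)}$ (which comes from that proposition's proof rather than its statement) just spells out what the paper leaves implicit.
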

\begin{proof}
Set $\mu=a-\lambda$. Then $\mu$ is an adjacency eigenvalue and $-\mu$ is not. Every nonzero off-diagonal entry $b$ of $FA_\sigma F^\ast$ belongs to a two-dimensional Hermitian block that contributes both eigenvalues $|b|$ and $-|b|$. Consequently $|\mu|$ cannot equal the magnitude of a nonzero off-diagonal entry. Proposition~\ref{prop:spectral-projector} now applies.
\end{proof}

\subsection{Constructions attaining the energy bound}

We now turn to a sufficient condition. The construction yields bound-attaining graphs beyond the cyclic Cayley examples and, through the Singer family below, shows that equality is not confined to abelian Cayley graphs.

\begin{theorem}[Affine involution construction]\label{thm:affine-involution}
Let \(a\in\mathbb Z_N^\times\) satisfy
\[
 a^2\equiv1\pmod N.
\]
Let \(S\subset\mathbb Z_N\) be nonempty and assume
\[
 S=-aS,
 \qquad
 S\cap(1-a)\mathbb Z_N=\emptyset.
\]
Define
\[
 f(x,y)=
 \begin{cases}
 1,&\text{if }y-ax\in S,\\
 0,&\text{otherwise.}
 \end{cases}
\]
Then \(f\) is the edge indicator of a simple undirected graph \(G\) on \(\mathbb Z_N\). If \(s=|E(G)|\), then
\[
 \operatorname{FR}_{\min}(G)=\frac{\mathcal E(G)}{\sqrt{2s}}.
\]
\end{theorem}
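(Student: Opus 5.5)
We prove the statement in two steps. First we check that $f$ is a legitimate adjacency matrix. Then we compute its Fourier transform and apply Theorem~\ref{thm:equality-criterion}.

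\emph{Symmetry.} Suppose $y-ax\in S$. Multiplying by $-a$ and using $a^2\equiv 1$ gives
\[
-a(y-ax)=x-ay.
\]
This element lies in $-aS=S$, so $f(y,x)=1$ whenever $f(x,y)=1$.

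\emph{No loops.} A loop at $x$ would require $x-ax=(1-a)x\in S$, which is excluded by $S\cap(1-a)\mathbb Z_N=\emptyset$. Hence $f$ is a $0$--$1$ symmetric matrix with zero diagonal, i.e.\ the edge indicator of a simple graph on $\mathbb Z_N$. Since $S\neq\emptyset$, each row contains $|S|$ ones, so $s=N|S|/2>0$.

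\emph{Fourier support.} We write $f(x,y)=1_S(y-ax)$ and compute
\[
\widehat f(m,n)=\frac1N\sum_{x}\sum_{u}1_S(u)\,e^{-2\pi i(mx+n(u+ax))/N},
\]
using the substitution $u=y-ax$, which is a bijection in $y$ for each fixed $x$. The inner sum over $u$ factors out as $N\,\widehat{1_S}(n)$, with the one-dimensional transform suitably normalized. The remaining sum $\sum_x e^{-2\pi i(m+an)x/N}$ equals $N$ if $m\equiv -an$ and vanishes otherwise. Thus
\[
\widehat f(m,n)=0 \quad\text{unless } m=-an.
\]
Because $a$ is a unit, $n\mapsto -an$ is a bijection of $\mathbb Z_N$. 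Each column $n$ therefore has at most one nonzero entry, in row $-an$. Each row $m$ has at most one nonzero entry, in column $n=-a^{-1}m=-am$.

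\emph{Conclusion.} Theorem~\ref{thm:equality-criterion} (stated for $FfF$, which is $\widehat f$ up to the normalization convention, irrelevant to supports) gives
\[
\operatorname{FR}(f)=\frac{\mathcal E(G)}{\sqrt{2s}}.
\]
Combined with the lower bound of Theorem~\ref{thm:energy-bound}, the identity labeling is a minimizer, so
\[
\operatorname{FR}_{\min}(G)=\frac{\mathcal E(G)}{\sqrt{2s}}.
\]

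The only point needing care is the Fourier computation: the change of variables and confirming that the character sum collapses onto the graph of a bijection. This works precisely because $a\in\mathbb Z_N^\times$. Everything else is direct verification.
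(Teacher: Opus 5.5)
Your proof is correct and follows the paper's argument essentially step for step. You get symmetry from $-a(y-ax)=x-ay$ and $S=-aS$, and no loops from $S\cap(1-a)\mathbb Z_N=\emptyset$. The substitution $u=y-ax$ then collapses $\widehat f$ onto the line $m+an\equiv0$, which is the graph of a bijection because $a$ is a unit, and Theorem~\ref{thm:equality-criterion} finishes the argument. Your explicit checks that $s=N|S|/2>0$ and that the lower bound of Theorem~\ref{thm:energy-bound} upgrades $\operatorname{FR}(f)$ to $\operatorname{FR}_{\min}(G)$ are small points the paper leaves implicit. Also, in this paper $\widehat f$ is exactly $FfF$, so your normalization caveat is unnecessary.
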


\begin{proof}
Suppose \(f(x,y)=1\). Then there exists \(t\in S\) such that \(y-ax=t\). Combining with
\(a^2\equiv1\pmod N\), we have
\[
 x-ay=x-a(ax+t)=x-a^2x-at=-at.
\]
Since \(S=-aS\), we have \(-at\in S\). Hence \(f(y,x)=1\). Thus the corresponding adjacency matrix is symmetric.

If \(f(x,x)=1\), then \((1-a)x\in S\), which contradicts
\(S\cap(1-a)\mathbb Z_N=\emptyset\). Hence the corresponding graph is simple.

It remains to compute the Fourier transform. We have
\[
 \widehat f(m,n)=\frac1N\sum_{x,y\in\mathbb Z_N}\mathbf 1_S(y-ax)e^{-2\pi i(mx+ny)/N}.
\]
Set \(t=y-ax\), so \(y=ax+t\). Then
\[
 \begin{aligned}
 \widehat f(m,n)
 &=\frac1N\sum_{x\in\mathbb Z_N}\sum_{t\in S}e^{-2\pi i(mx+n(ax+t))/N}\\
 &=\frac1N\left(\sum_{x\in\mathbb Z_N}e^{-2\pi i(m+an)x/N}\right)
 \left(\sum_{t\in S}e^{-2\pi i nt/N}\right).
 \end{aligned}
\]
By character orthogonality,
\[
 \sum_{x\in\mathbb Z_N}e^{-2\pi i(m+an)x/N}
 =
 \begin{cases}
 N,&\text{if }m+an\equiv0\pmod N,\\
 0,&\text{otherwise.}
 \end{cases}
\]
Therefore
\[
 \widehat f(m,n)=\mathbf 1_{\{m+an\equiv0\pmod N\}}\sum_{t\in S}e^{-2\pi i nt/N}.
\]
Since \(a\in\mathbb Z_N^\times\), the congruence \(m+an\equiv0\pmod N\) has a unique solution
\(m\) for each \(n\), and a unique solution \(n\) for each \(m\). Hence every row and
every column of \(\widehat f\) contains at most one nonzero entry. The result follows
from Theorem~\ref{thm:equality-criterion}.
\end{proof}

Choosing specific involutions $a\in\mathbb Z_N^\times$ in
Theorem~\ref{thm:affine-involution} produces several families of examples. The case
$a=1$ recovers the Cayley graph equality case of \cite{GuptaIosevich}, while the case
$a=-1$ produces the sum-graph examples used below.

\begin{remark*}
The construction also allows nontrivial square roots of one when $N$ is composite. For instance, in $\mathbb Z_{15}$ one may take $a=4$ and $S=\{1,11\}$. Then $a^2\equiv1\pmod{15}$, the map $t\mapsto-4t$ interchanges $1$ and $11$, and
\[
(1-a)\mathbb Z_{15}=3\mathbb Z_{15}=\{0,3,6,9,12\}
\]
is disjoint from $S$. The resulting $2$-regular graph therefore attains equality. This illustrates that the affine-involution hypothesis is not restricted to the two choices $a=\pm1$.
\end{remark*}

\begin{corollary}[Cayley graphs]\label{cor:cyclic-cayley}
Let \(S\subset\mathbb Z_N\) be nonempty and assume
\[
 S=-S,
 \qquad
 0\notin S.
\]
Define
\[
 f(x,y)=\mathbf 1_S(y-x).
\]
Then \(f\) is the edge indicator of a simple undirected Cayley graph \(G\) on \(\mathbb Z_N\). If \(s=|E(G)|\), then
\[
 \operatorname{FR}_{\min}(G)=\frac{\mathcal E(G)}{\sqrt{2s}}.
\]
\end{corollary}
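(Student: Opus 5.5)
This is the special case $a=1$ of Theorem~\ref{thm:affine-involution}, so the plan is to check that its hypotheses hold and then invoke it. First, $a=1$ is a unit in $\mathbb Z_N$ and satisfies $a^2\equiv1\pmod N$. With $a=1$ the symmetry hypothesis $S=-aS$ reads $S=-S$, which is assumed.

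The disjointness hypothesis needs one line of attention. Here
\[
(1-a)\mathbb Z_N=0\cdot\mathbb Z_N=\{0\},
\]
so $S\cap(1-a)\mathbb Z_N=\emptyset$ is exactly the assumption $0\notin S$. The function defined in Theorem~\ref{thm:affine-involution} is $\mathbf 1_S(y-ax)=\mathbf 1_S(y-x)$, which is the given $f$.

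Theorem~\ref{thm:affine-involution} then says that $f$ is the edge indicator of a simple undirected graph $G$ on $\mathbb Z_N$ with $\operatorname{FR}_{\min}(G)=\mathcal E(G)/\sqrt{2s}$. Adjacency depends only on $y-x\in S$, so $G$ is by definition the Cayley graph $\operatorname{Cay}(\mathbb Z_N,S)$. The condition $S=-S$ makes it undirected, and $0\notin S$ rules out loops.

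There is no real obstacle; the only point to state explicitly is the identification $(1-a)\mathbb Z_N=\{0\}$ when $a=1$. As an optional sanity check, one can note directly that for $a=1$ the Fourier transform $\widehat f$ is supported on the antidiagonal $m+n\equiv0\pmod N$. Thus each row and column has at most one nonzero entry, and Theorem~\ref{thm:equality-criterion} applies.
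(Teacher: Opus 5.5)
Your proposal is correct and matches the paper's proof. Both specialize Theorem~\ref{thm:affine-involution} to $a=1$, where the hypotheses $S=-aS$ and $S\cap(1-a)\mathbb Z_N=\emptyset$ become $S=-S$ and $0\notin S$, and both note that the Fourier transform is supported on the line $m+n\equiv0\pmod N$.
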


\begin{proof}
This is Theorem~\ref{thm:affine-involution} with \(a=1\). The hypotheses
\(S=-aS\) and \(S\cap(1-a)\mathbb Z_N=\emptyset\) become \(S=-S\) and \(0\notin S\), respectively.
The Fourier transform is supported on the line \(m+n=0\). This recovers the cyclic
Cayley equality case quoted in \cite{GuptaIosevich}.
\end{proof}

\begin{corollary}\label{cor:cyclic-sum}
Let \(S\subset\mathbb Z_N\) be nonempty and suppose
\[
 S\cap2\mathbb Z_N=\emptyset.
\]
Define
\[
 f(x,y)=\mathbf 1_S(x+y).
\]
Then \(f\) is the edge indicator of a simple undirected graph \(G\). If \(s=|E(G)|\), then
\[
 \operatorname{FR}_{\min}(G)=\frac{\mathcal E(G)}{\sqrt{2s}}.
\]
\end{corollary}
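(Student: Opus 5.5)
This is the special case $a=-1$ of Theorem~\ref{thm:affine-involution}, so the plan is to verify its hypotheses and then invoke it directly.

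First, $a=-1$ is a unit in $\mathbb Z_N$ and satisfies $a^2=1$. The edge rule $y-ax\in S$ then becomes $y+x\in S$, so the edge indicator $f(x,y)=\mathbf 1_S(x+y)$ is exactly the function defined in the theorem.

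Next I check the two conditions on $S$.
\begin{itemize}
\item The symmetry condition $S=-aS$ reads $S=S$, so it holds trivially.
\item The loop-free condition $S\cap(1-a)\mathbb Z_N=\emptyset$ reads $S\cap 2\mathbb Z_N=\emptyset$, which is precisely the hypothesis of the corollary.
\end{itemize}
This is really the only step requiring care. When $N$ is odd, $2\mathbb Z_N=\mathbb Z_N$, so the hypothesis forces $S=\emptyset$ and the statement is vacuous (nonemptiness fails). When $N$ is even, the hypothesis says $S$ consists of odd residues, so $x+y\in S$ forces $x\neq y$ and no loops occur.

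With both hypotheses verified, Theorem~\ref{thm:affine-involution} shows that $f$ is the edge indicator of a simple undirected graph $G$ and that
\[
\operatorname{FR}_{\min}(G)=\frac{\mathcal E(G)}{\sqrt{2s}}.
\]

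As a sanity check, the Fourier transform should be supported on the antidiagonal line $m-n\equiv0$, that is, on the diagonal. This gives at most one nonzero entry per row and column, consistent with Theorem~\ref{thm:equality-criterion}.

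I expect no real obstacle. The only point worth remarking is the parity discussion, which shows the corollary has content only for even $N$.
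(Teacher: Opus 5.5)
Your proposal is correct and is exactly the paper's argument: you specialize Theorem~\ref{thm:affine-involution} to $a=-1$, where $S=-aS$ holds automatically and $S\cap(1-a)\mathbb Z_N=\emptyset$ becomes the stated hypothesis $S\cap2\mathbb Z_N=\emptyset$. One wording slip in your sanity check: the support line $m\equiv n$ is the diagonal, not the antidiagonal, as your own next phrase says.
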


\begin{proof}
This is Theorem~\ref{thm:affine-involution} with \(a=-1\). The condition \(S=-aS\)
is automatic, and \(S\cap(1-a)\mathbb Z_N=\emptyset\) becomes \(S\cap2\mathbb Z_N=\emptyset\).
\end{proof}

\begin{corollary}\label{cor:Q3}
The three-dimensional hypercube \(Q_3\) is not a Cayley graph on $\mathbb Z_8$ and satisfies
\[
 \operatorname{FR}_{\min}(Q_3)=\frac{\mathcal E(Q_3)}{\sqrt{24}}=\sqrt6.
\]
\end{corollary}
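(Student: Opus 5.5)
Our plan has two parts: realize $Q_3$ through Corollary~\ref{cor:cyclic-sum} to get equality, and then show that $Q_3$ is not a circulant. For the first part we take $N=8$ and look for $S\subset\mathbb Z_8$ with $S\cap 2\mathbb Z_8=\emptyset$, so $S$ consists of odd residues. The graph $f(x,y)=\mathbf 1_S(x+y)$ must then be $3$-regular. A natural choice is $S=\{1,3,5\}$ (or any three odd residues). Since every element of $S$ is odd, the graph is bipartite between even and odd labels, and every even $x$ is joined to the three odd vertices $s-x$ with $s\in S$. We will verify that this graph is isomorphic to $Q_3$.

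Concretely, $Q_3$ is $K_{4,4}$ minus a perfect matching. In our graph, even $x$ is non-adjacent exactly to the odd $y$ with $x+y=7$, i.e.\ $y=7-x$. This is a perfect matching between the even and odd classes, so the graph is $K_{4,4}$ minus a perfect matching, which is $Q_3$. Corollary~\ref{cor:cyclic-sum} then gives $\operatorname{FR}_{\min}(Q_3)=\mathcal E(Q_3)/\sqrt{2s}$ with $s=12$, so the denominator is $\sqrt{24}$. The adjacency spectrum of $Q_3$ is $\{3,1,1,1,-1,-1,-1,-3\}$, hence $\mathcal E(Q_3)=12$ and $12/\sqrt{24}=\sqrt6$.

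For the second part we show $Q_3$ is not a Cayley graph on $\mathbb Z_8$. A connection set $T=-T$ with $0\notin T$ and $|T|=3$ must contain the involution $4$ together with a pair $\{t,-t\}$. Bipartiteness forces all elements of $T$ to be odd, but $4$ is even, a contradiction. Alternatively, one can argue through $4$-cycles or through the spectrum: the circulant eigenvalues $\sum_{t\in T}\omega^{jt}$ with $T=\{4,t,-t\}$ equal $(-1)^j+2\cos(2\pi jt/8)$, and at $j=4$ this is $1+2(-1)^t$, which is never $-3$. The parity argument is cleanest.

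The only step requiring care is the isomorphism check, and the $K_{4,4}$-minus-matching description settles it quickly. We do not expect any real obstacle.
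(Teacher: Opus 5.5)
Your realization of $Q_3$ is correct and more economical than the paper's. The paper writes down an explicit bijection $\mathbb Z_8\to\{0,1\}^3$, lists the $12$ edges of the sum graph, and checks that each edge flips exactly one coordinate. You instead observe that the sum graph with $S=\{1,3,5\}$ is $K_{4,4}$ minus the perfect matching $\{x,7-x\}$, and you use the standard fact that $Q_3$ is $K_{4,4}$ minus the antipodal matching. This avoids the edge-by-edge check and shows that any three odd residues work; the paper's route has the advantage of giving explicit coordinates. Your appeal to Corollary~\ref{cor:cyclic-sum} and the computation $12/\sqrt{24}=\sqrt6$ agree with the paper.

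The non-Cayley part has a gap. You claim that bipartiteness forces every element of $T$ to be odd, but this is false for Cayley graphs on $\mathbb Z_8$ in general: $\operatorname{Cay}(\mathbb Z_8,\{2,6\})$ is two disjoint $4$-cycles, so it is bipartite with an even connection set. The claim holds only for \emph{connected} Cayley graphs. Indeed, if $T$ contains an odd $t$ and an even $u$, write $u=kt$ with $k$ even; then $0,t,2t,\dots,kt=u,0$ is a closed walk of odd length $k+1$. Connectedness is what guarantees that an odd $t$ exists, so you must use that $Q_3$ is connected, and your argument never mentions it. With that added, your argument becomes the paper's. For $T=\{4,t,-t\}$, an even $t$ gives $T=\{2,4,6\}\subset 2\mathbb Z_8$, hence a disconnected graph. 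An odd $t$ gives $4t=4$ and therefore the $5$-cycle $0,t,2t,3t,4$.

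Your spectral alternative is also incomplete as written. Checking only $j=4$ does not exclude the eigenvalue $-3$ at some other frequency, so you must check every $j$. For even $j$ the eigenvalue is at least $-1$. For odd $j$, the equation $-1+2\cos(2\pi jt/8)=-3$ forces $jt\equiv4\pmod 8$, hence $t\equiv4$, which is excluded.
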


\begin{proof}
Consider the graph on \(\mathbb Z_8\) defined by
\[
 f(x,y)=\mathbf 1_{\{1,3,5\}}(x+y).
\]

By Corollary~\ref{cor:cyclic-sum}, this graph attains the energy lower bound.

Define \(\sigma:\mathbb Z_8\to\{0,1\}^3\) by
\[
\begin{array}{c|cccccccc}
 x&0&1&2&3&4&5&6&7\\ \hline
 \sigma(x)&000&001&011&010&101&100&110&111.
\end{array}
\]

The unordered edges determined by \(x+y\in\{1,3,5\}\) are
\[
\begin{gathered}
\{0,1\},\{0,3\},\{0,5\},\{1,2\},\{1,4\},\{2,3\},\\
\{2,7\},\{3,6\},\{4,5\},\{4,7\},\{5,6\},\{6,7\}.
\end{gathered}
\]

Under \(\sigma\), each listed edge changes exactly one coordinate. Since there are \(12\) listed edges and \(Q_3\) has \(12\) edges, this graph is isomorphic to \(Q_3\).

Moreover, $Q_3$ is not a Cayley graph on $\mathbb Z_8$. Indeed, every inverse-closed three-element connection set in $\mathbb Z_8$ has the form $\{4,a,-a\}$. If $a$ is even, the resulting Cayley graph is disconnected. If $a$ is odd, it contains the $5$-cycle
\[
0,\ a,\ 2a,\ 3a,\ 4,\ 0.
\]
Neither possibility is consistent with the fact that $Q_3$ is connected and bipartite.

The adjacency spectrum of $Q_3$ is $\{3,1^{(3)},(-1)^{(3)},-3\}$, so $\mathcal E(Q_3)=12$ and $s=12$. Therefore
\[
 \operatorname{FR}_{\min}(Q_3)=\frac{12}{\sqrt{24}}=\sqrt6.
\]
\end{proof}

\subsection{The cube and its spectral projectors}\label{subsec:cube-projectors}

The three-dimensional cube is a small natural example showing that equality in the edge-energy bound does not force a cyclic Cayley representation. It also illustrates the scope and the limitations of Proposition~\ref{prop:spectral-projector}.

\begin{proposition}\label{prop:cube-projectors}
There is a labeling of the three-dimensional cube $Q_3$ by $\mathbb Z_8$ for which the adjacency Fourier transform has two one-dimensional blocks with eigenvalues $3$ and $-3$, while every nonzero off-diagonal entry has modulus $1$. Consequently the Laplacian projectors associated with the eigenvalues $0$ and $6$ satisfy
\[
\operatorname{FR}_{\min}(\Pi_0)=1,
\qquad
\operatorname{FR}_{\min}(\Pi_6)=1.
\]
The criterion in Proposition~\ref{prop:spectral-projector} does not determine whether the projectors associated with the eigenvalues $2$ and $4$ attain equality.
\end{proposition}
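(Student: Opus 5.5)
We will work with the explicit labeling used in the proof of Corollary~\ref{cor:Q3}, namely the graph on $\mathbb Z_8$ with edge indicator $A(x,y)=\mathbf 1_S(x+y)$, where $S=\{1,3,5\}$. That corollary already shows this graph is isomorphic to $Q_3$ and attains equality in Theorem~\ref{thm:energy-bound}. The plan is to compute $H=FAF^\ast$ explicitly, read off its one- and two-dimensional blocks, and then invoke Proposition~\ref{prop:spectral-projector} for the adjacency eigenvalues $\pm3$.

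\textbf{Step 1: compute $H$.} We have
\[
H_{m,k}=\frac18\sum_{x,y}\mathbf 1_S(x+y)\,e^{-2\pi i(mx-ky)/8}.
\]
Substitute $y=t-x$, exactly as in the proof of Theorem~\ref{thm:affine-involution}. This gives
\[
H_{m,k}=\mathbf 1_{\{k\equiv -m\}}\sum_{t\in S}e^{2\pi i kt/8}.
\]
So $H$ is supported on the antidiagonal $k=-m$, and the involution of Proposition~\ref{prop:weighted-involution} is $\tau(m)=-m$. Its fixed points are $m=0$ and $m=4$, which give the one-dimensional blocks. The remaining indices pair up as $\{1,7\},\{2,6\},\{3,5\}$, which give the two-dimensional blocks.

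\textbf{Step 2: evaluate the block entries.} At the fixed points the entries are:
\begin{itemize}
\item $m=0$: the sum is $|S|=3$;
\item $m=4$: the sum is $\sum_{t\in S}(-1)^t=-3$.
\end{itemize}
These are the eigenvalues $3$ and $-3$ of the one-dimensional blocks. For the off-diagonal entries, write $\omega=e^{2\pi i/8}$; only the moduli of $\omega^{k}+\omega^{3k}+\omega^{5k}$ for $k=1,2,3$ are needed, since $k$ and $-k$ give complex conjugates.
\begin{itemize}
\item $k=1$: since $\omega^5=-\omega$, the sum collapses to $\omega^3$, of modulus $1$.
\item $k=2$: the terms are $i,-i,i$, so the sum is $i$.
\item $k=3$: we have $\omega^9+\omega^{15}=\omega+\overline\omega=\sqrt2$, so the sum is $\sqrt2+\omega^{3\cdot3}$-type terms that combine to $\tfrac1{\sqrt2}\pm\tfrac{i}{\sqrt2}$, of modulus $1$.
\end{itemize}
Hence every nonzero off-diagonal entry has modulus $1$. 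This is consistent with the spectrum $\{3,1^{(3)},(-1)^{(3)},-3\}$.

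\textbf{Step 3: conclude.} The Laplacian is $L=3I-A$. Its eigenvalue $0$ corresponds to $\mu=3$, and its eigenvalue $6$ corresponds to $\mu=-3$. In both cases $|\mu|=3\neq1$, so the hypothesis of Proposition~\ref{prop:spectral-projector} holds. Each of these eigenvalues has multiplicity $1$, so the proposition gives $\operatorname{FR}_{\min}(\Pi_0)=\operatorname{FR}_{\min}(\Pi_6)=\sqrt1=1$.

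The Laplacian eigenvalues $2$ and $4$ correspond to $\mu=\pm1$. Here $|\mu|=1$ coincides with the modulus of the off-diagonal entries, so the $\mu$-eigenspaces live on the two-dimensional blocks, and the projector there is not diagonal. Therefore the hypothesis of Proposition~\ref{prop:spectral-projector} fails, and the criterion gives no conclusion. The same applies to Corollary~\ref{cor:spectral-projector}, since $2a-\lambda$ lies in the spectrum for $\lambda=2,4$.

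The main obstacle is purely bookkeeping. One must get the sign conventions right, in particular that $FAF^\ast$ rather than $FAF$ turns the diagonal support from Theorem~\ref{thm:affine-involution} into the antidiagonal $k=-m$. One must also check the $k=3$ modulus computation carefully.
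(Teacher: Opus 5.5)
Your proposal is correct and follows essentially the same route as the paper. Both use the sum-graph labeling $\mathbf 1_{\{1,3,5\}}(x+y)$ and compute $FAF^\ast$ as supported on $m+k\equiv0$. Both read off the diagonal entries $3$ and $-3$ at the fixed points $0$ and $4$, check that the six off-diagonal entries have modulus $1$, and then invoke Proposition~\ref{prop:spectral-projector}.

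One small slip: in the $k=3$ line the leftover term is $\omega^{3}$, not $\omega^{3\cdot3}$. The sum is exactly $\sqrt2+\omega^3=\tfrac{1}{\sqrt2}+\tfrac{i}{\sqrt2}$, with no $\pm$.
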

\begin{proof}
Use the sum-graph labeling constructed in Corollary~\ref{cor:Q3}, so that
\[
A(x,y)=\mathbf 1_{\{1,3,5\}}(x+y),
\qquad x,y\in\mathbb Z_8.
\]
For the Hermitian Fourier convention $H=FAF^\ast$, a direct change of variables gives
\[
H(m,n)=\mathbf 1_{\{m+n\equiv0\pmod 8\}}
\sum_{t\in\{1,3,5\}}e^{2\pi i nt/8}.
\]
The fixed points of the involution $n\mapsto-n$ are $0$ and $4$. At these indices the corresponding diagonal entries are $3$ and $-3$. At each of the remaining six indices, the displayed exponential sum has modulus $1$. Proposition~\ref{prop:spectral-projector} therefore applies to the adjacency eigenvalues $3$ and $-3$. Since $Q_3$ is $3$-regular, these correspond to the Laplacian eigenvalues $0$ and $6$. The adjacency eigenvalues $1$ and $-1$, corresponding to the Laplacian eigenvalues $2$ and $4$, have the same modulus as the off-diagonal entries, so the sufficient condition does not apply to them.
\end{proof}

\subsection{Levi graphs from Singer difference sets}\label{sec:singer}
In this subsection, we use difference sets to produce, for every prime power $q\geq2$, an equality-attaining graph that is not an abelian Cayley graph. The Heawood graph appears as the case $q=2$.

\begin{definition}[\cite{BJL}]\label{def:difference-set}
    Let $D\subset\mathbb Z_v$ with $|D|=k$. We call $D$ a $(v,k,\lambda)$-difference set in the cyclic group $\mathbb Z_v$ if every nonzero element of $\mathbb Z_v$ has exactly $\lambda$ representations of the form
    \[
        d-d',\qquad d,d'\in D.
    \]
    Here, ordered pairs are counted separately.
\end{definition}

\begin{theorem}[\cite{Singer1938}]\label{thm:singer-difference-set}
    Let $q$ be a prime power and put $v=q^2+q+1$. Then there exists a
    $(v,q+1,1)$-difference set $D\subset\mathbb Z_v$.
    Such a set is called a Singer difference set.
\end{theorem}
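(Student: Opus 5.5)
The plan is the classical finite-field construction: realize the projective plane $\mathrm{PG}(2,q)$ as a cyclic group of order $v$, and take $D$ to be the points of one fixed line. Let $K=\mathbb F_{q^3}$, viewed as a $3$-dimensional vector space over $k=\mathbb F_q$. The multiplicative group $K^\times$ is cyclic of order $q^3-1$, and it contains $k^\times$, of order $q-1$. Hence the quotient
\[
\Gamma=K^\times/k^\times
\]
is cyclic of order $(q^3-1)/(q-1)=q^2+q+1=v$. Fixing a generator identifies $\Gamma$ with $\mathbb Z_v$, and multiplication in $\Gamma$ becomes addition in $\mathbb Z_v$. The elements of $\Gamma$ are exactly the one-dimensional $k$-subspaces of $K$, that is, the points of $\mathrm{PG}(2,q)$.

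Next I fix a $2$-dimensional $k$-subspace $W\subset K$; for instance, the kernel of the trace map $K\to k$ works. Let $D\subset\Gamma$ be the image of $W\setminus\{0\}$. Its size is
\[
|D|=\frac{q^2-1}{q-1}=q+1.
\]
In multiplicative notation, the number of ordered pairs $(d,d')\in D^2$ with $d(d')^{-1}=g$ equals $|D\cap gD|$. If $g$ is represented by $\alpha\in K^\times$, then $gD$ is the image of the $2$-dimensional subspace $\alpha W$.

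The key step, and the one I expect to be the main obstacle, is to show that $\alpha W\neq W$ whenever $g\neq1$, that is, whenever $\alpha\notin k$. Suppose instead that $\alpha W=W$ with $\alpha\notin k$. Then $W$ is closed under multiplication by $k(\alpha)$. Since $[K:k]=3$ is prime, every element outside $k$ generates all of $K$, so $k(\alpha)=K$. This would make $W$ a nonzero $K$-subspace of $K$, forcing $W=K$ and contradicting $\dim_k W=2$.

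Given $\alpha W\neq W$, the conclusion is a dimension count. Two distinct $2$-dimensional subspaces of a $3$-dimensional space satisfy
\[
\dim(W\cap\alpha W)=2+2-\dim(W+\alpha W)=2+2-3=1.
\]
So their projective images share exactly one point, and $|D\cap gD|=1$ for every $g\neq1$. Transporting through the isomorphism $\Gamma\cong\mathbb Z_v$, every nonzero element of $\mathbb Z_v$ has exactly one representation $d-d'$ with $d,d'\in D$. Hence $D$ is a $(v,q+1,1)$-difference set.
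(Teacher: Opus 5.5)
Your proof is correct and complete. The paper gives no argument of its own for this statement: it quotes the result from \cite{Singer1938} and uses it only through the defining property of a $(v,q+1,1)$-difference set, namely in the four-cycle argument of Theorem~\ref{thm:singer-levi} and in the computation $MM^T=qI+J$.

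What you have written is the classical proof, essentially Singer's own in modern language. The cyclic group $\mathbb F_{q^3}^\times/\mathbb F_q^\times$ acts regularly on the points of $\mathrm{PG}(2,q)$, and any line of the plane is then a planar difference set. Your two delicate steps both hold as stated. First, $\alpha W=W$ makes $W$ a module over $k[\alpha]=k(\alpha)$, and $k(\alpha)=K$ for $\alpha\notin k$ because $[K:k]=3$ is prime. Second, $D\cap gD$ is exactly the image of $(W\cap\alpha W)\setminus\{0\}$, because $W$ and $\alpha W$ are unions of $k^\times$-cosets together with $0$.

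Compared with the bare citation, your argument makes the section self-contained. It also shows that any $2$-dimensional $\mathbb F_q$-subspace of $\mathbb F_{q^3}$, not only the trace kernel, gives a Singer set. Your use of the primality of $3$ is the one step that does not transfer verbatim to the higher-dimensional Singer parameters. For a hyperplane $W\subset\mathbb F_{q^{n+1}}$, consider the multiplier field $L=\{\alpha:\alpha W\subseteq W\}$. Its degree $[L:k]$ divides both $\dim_k W=n$ and $n+1$, so $L=k$, and this replaces the primality argument.
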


\begin{definition}\label{def:levi-graph}
    Let $D\subset\mathbb Z_v$. The Levi graph associated with $D$ is the graph with
    vertex set $P\cup L$, where
    \[
        P=\{P_i:i\in\mathbb Z_v\}
        \qquad\text{and}\qquad
        L=\{L_j:j\in\mathbb Z_v\}
    \]
    are two disjoint classes of $v$ vertices each, and in which $P_i$ is adjacent to $L_j$
    if and only if
    \[
        i+j\in D.
    \]
    Since every edge joins a vertex of $P$ to a vertex of $L$, the graph is bipartite with
    parts $P$ and $L$.
\end{definition}

To show that a given graph is not an abelian Cayley graph, we use the following lemma.

\begin{lemma}\label{lem:abelian-cayley-four-cycle}
Every simple undirected abelian Cayley graph of degree at least \(3\) contains a
\(4\)-cycle.
\end{lemma}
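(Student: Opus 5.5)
The plan is to exhibit an explicit $4$-cycle built from commuting generators. Let $\Gamma=\operatorname{Cay}(\Gamma_0,S)$, where $\Gamma_0$ is an abelian group written additively, $S=-S$, $0\notin S$, and $|S|\geq3$. The vertex $x$ is adjacent to $x+s$ for $s\in S$. For any $s,t\in S$, the walk
\[
0,\quad s,\quad s+t,\quad t,\quad 0
\]
is closed, because $s+t=t+s$. The edges used are $\{0,s\}$ (difference $s$), $\{s,s+t\}$ (difference $t$), $\{s+t,t\}$ (difference $s$), and $\{t,0\}$ (difference $t$), so each is indeed an edge.

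The only work is to pick $s,t$ so that the four vertices $0,s,s+t,t$ are distinct. This requires the following:
\begin{itemize}
\item $s\neq0$ and $t\neq0$, which hold automatically since $0\notin S$;
\item $s\neq t$;
\item $s+t\neq0$, that is, $t\neq -s$;
\item $s+t\neq s$ and $s+t\neq t$, which hold automatically.
\end{itemize}
So we need $t\in S\setminus\{s,-s\}$. Since $|\{s,-s\}|\leq2$ and $|S|\geq3$, such a $t$ exists for any choice of $s\in S$.

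The step that needs care is confirming that these four distinct vertices really form a $4$-cycle rather than a degenerate closed walk. Once the vertices are distinct, the four edges listed above are distinct unordered pairs, so the closed walk is a genuine $4$-cycle $C_4$ (as a subgraph, not necessarily induced). This is the only point where the degree hypothesis enters: for degree $2$ with $S=\{s,-s\}$ the construction fails, and indeed cycles $C_n$ with $n\neq 4$ are Cayley graphs without $4$-cycles. This matches the requirement of degree at least $3$ in the lemma.

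Finally, note that ``abelian Cayley graph'' means isomorphic to such a Cayley graph. Containing a $4$-cycle is an isomorphism invariant, so the conclusion transfers to any graph isomorphic to $\operatorname{Cay}(\Gamma_0,S)$. There is no real obstacle here; the proof is a direct construction.
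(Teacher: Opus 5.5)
Your proposal is correct and is essentially the paper's proof: the paper uses the closed walk $0,c,c-d,-d,0$ with $d\in S\setminus\{c,-c\}$, which becomes your $4$-cycle $0,s,s+t,t$ under the substitution $t=-d$, with the same distinctness checks. Your explicit remark that containing a $4$-cycle is an isomorphism invariant is a small but welcome addition, since the lemma is later applied to graphs that are only isomorphic to abelian Cayley graphs.
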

\begin{proof}
Let the graph be \(\operatorname{Cay}(G,S)\), where \(G\) is abelian, \(0\notin S\), and \(S=-S\). Its degree equals \(|S|\), so \(|S|\geq 3\).

Fix any \(c\in S\). Since \(|\{c,-c\}|\leq 2<3\leq|S|\), the set \(S\setminus\{c,-c\}\) is
nonempty. Thus there exists \(d\in S\) with \(d\neq c\) and \(d\neq -c\). We claim that the following chain
\[
 0\to c\to c-d\to -d\to0
\]
is a walk. The consecutive vertex differences are \(c,-d,-c,d\), all of which lie in \(S\) since
\(c,d\in S=-S\). Hence consecutive vertices are adjacent, and the last is adjacent to the first.

The four vertices are distinct. Indeed, $c,d\neq0$, $c\neq d$, and $c\neq-d$; these inequalities also imply $c-d\neq0$, $c-d\neq c$, and $c-d\neq-d$. Thus $0,c,c-d,-d$ form a $4$-cycle.
\end{proof}

\begin{theorem}\label{thm:singer-levi}
For every prime power \(q\geq2\), there is a graph \(G_q\) such that, with \(s_q=|E(G_q)|\),
\[
 \operatorname{FR}_{\min}(G_q)=\frac{\mathcal E(G_q)}{\sqrt{2s_q}},
\]
but \(G_q\) is not isomorphic to any abelian Cayley graph.
\end{theorem}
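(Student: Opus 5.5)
We take \(G_q\) to be the Levi graph of a Singer difference set \(D\subset\mathbb Z_v\), \(v=q^2+q+1\), supplied by Theorem~\ref{thm:singer-difference-set}. It has \(N=2v\) vertices and is \((q+1)\)-regular, since each \(P_i\) is adjacent to \(L_j\) exactly for \(j\in D-i\), and symmetrically. The proof has two parts: realize \(G_q\) by the affine-involution construction, and show that \(G_q\) has no \(4\)-cycle. Lemma~\ref{lem:abelian-cayley-four-cycle} then finishes, because \(q+1\geq3\).

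\emph{Equality.} Since \(v\) is odd, \(N=2v\) and \(\mathbb Z_{2v}\cong\mathbb Z_2\times\mathbb Z_v\) by the Chinese remainder theorem. We label \(P_i\) by the element \(x\) with \(x\equiv0\pmod 2\) and \(x\equiv i\pmod v\), and \(L_j\) by the element \(y\) with \(y\equiv1\pmod2\) and \(y\equiv j\pmod v\). Put
\[
S=\{t\in\mathbb Z_{2v}: t\equiv1\pmod2,\ t\bmod v\in D\}.
\]
With \(a=-1\) we have \(f(x,y)=\mathbf 1_S(x+y)\). The sum \(x+y\) is odd exactly when the two vertices lie in different classes. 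In that case \(x+y\bmod v=i+j\), so the edges of this sum graph are exactly the Levi incidences. The set \(S\) consists of odd residues, and \(2\mathbb Z_{2v}\) is the set of even residues, so \(S\cap2\mathbb Z_{2v}=\emptyset\). Corollary~\ref{cor:cyclic-sum} therefore gives \(\operatorname{FR}_{\min}(G_q)=\mathcal E(G_q)/\sqrt{2s_q}\). The only points to check are that the labeling is a bijection onto \(\mathbb Z_{2v}\) and that the parity and CRT bookkeeping is correct. This is routine.

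\emph{No \(4\)-cycles.} The graph is bipartite, so any \(4\)-cycle has the form \(P_i,L_j,P_{i'},L_{j'}\) with \(i\neq i'\) and \(j\neq j'\). Then \(i+j,\ i'+j,\ i+j',\ i'+j'\in D\). Consider
\[
(i+j)-(i'+j)=i-i'=(i+j')-(i'+j').
\]
This writes the nonzero element \(i-i'\) as a difference of elements of \(D\) in two ways. The pairs \((i+j,i'+j)\) and \((i+j',i'+j')\) are distinct because \(j\neq j'\). This contradicts \(\lambda=1\). This step is the main content but is short.

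\emph{Conclusion.} Suppose \(G_q\cong\operatorname{Cay}(\Gamma,T)\) with \(\Gamma\) abelian. Its degree is \(q+1\geq3\), so it contains a \(4\)-cycle by Lemma~\ref{lem:abelian-cayley-four-cycle}. This contradicts the previous step. I expect no real obstacle. The most delicate point is making sure the labeling matches the hypotheses of Corollary~\ref{cor:cyclic-sum} exactly, namely that \(f\) is symmetric and has no loops. Both follow since \(x+y\) odd forces \(x\neq y\).
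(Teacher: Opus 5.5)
Your proof is correct and follows the paper's argument. You realize the Singer Levi graph as a sum graph $\mathbf 1_S(x+y)$ on $\mathbb Z_{2v}$ with $S$ consisting of odd residues, apply Corollary~\ref{cor:cyclic-sum}, exclude $4$-cycles using $\lambda=1$, and conclude with Lemma~\ref{lem:abelian-cayley-four-cycle}. The differences are cosmetic: the paper labels $P_i=2i$ and $L_j=2j+1$ with $S=1+2D$, which does not need $v$ odd, and it uses the difference $j-j'$ where you use $i-i'$.
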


\begin{proof}
Let \(v=q^2+q+1\), and choose \(D\subset\mathbb Z_v\) as in
Theorem~\ref{thm:singer-difference-set}. Set
\[
 N=2v,
 \qquad
 S=1+2D\subset\mathbb Z_{2v}.
\]
Define a graph \(G_q\) on \(\mathbb Z_{2v}\) by
\[
 f(x,y)=\mathbf 1_S(x+y).
\]
Since every element of \(S\) is odd, Corollary~\ref{cor:cyclic-sum} gives
\[
 \operatorname{FR}_{\min}(G_q)=\frac{\mathcal E(G_q)}{\sqrt{2s_q}}.
\]
Now we write the even vertices as
\[
 P_i=2i,
 \qquad i\in\mathbb Z_v,
\]
and the odd vertices as
\[
 L_j=2j+1,
 \qquad j\in\mathbb Z_v.
\]
Then
\[
 P_i\sim L_j
 \quad\Longleftrightarrow\quad
 2i+(2j+1)\in1+2D
 \quad\Longleftrightarrow\quad
 i+j\in D.
\]
Thus \(G_q\) is the Levi graph associated with \(D\).
It is bipartite and \((q+1)\)-regular.

We next prove that \(G_q\) has no \(4\)-cycle. Suppose two distinct vertices
\(P_i,P_{i'}\) had two distinct common neighbors \(L_j,L_{j'}\). Then
\[
 i+j,
 \quad i+j',
 \quad i'+j,
 \quad i'+j'
\]
all belong to \(D\). Moreover
\[
 (i+j)-(i+j')=(i'+j)-(i'+j').
\]
Since \(j\neq j'\), this is a nonzero element of \(\mathbb Z_v\). By the uniqueness
property in Theorem~\ref{thm:singer-difference-set}, the two ordered representations
must be the same. Hence
\[
 i+j=i'+j,
 \qquad
 i+j'=i'+j',
\]
and therefore \(i=i'\), a contradiction. Thus no \(4\)-cycle exists.

Since \(q\geq2\), the degree \(q+1\) is at least \(3\). By
Lemma~\ref{lem:abelian-cayley-four-cycle}, every simple undirected abelian Cayley graph
of degree at least \(3\) contains a \(4\)-cycle. Therefore \(G_q\) is not an abelian
Cayley graph.
\end{proof}

\begin{proposition}\label{prop:energy-singer}
The adjacency spectrum of $G_q$ is
\[
\{(q+1)^{(1)},\ (\sqrt q)^{(v-1)},\ (-\sqrt q)^{(v-1)},\ (-(q+1))^{(1)}\},
\]
where $v=q^2+q+1$. In particular,
\[
\mathcal E(G_q)=2(q+1)+2(v-1)\sqrt q.
\]
\end{proposition}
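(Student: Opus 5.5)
We plan to compute the spectrum of $G_q$ from the bipartite block structure. Order the vertices as $P$ followed by $L$, and let $M$ be the $v\times v$ incidence matrix with $M_{ij}=\mathbf 1_D(i+j)$. The adjacency matrix is then
\[
A=\begin{pmatrix}0&M\\ M^{T}&0\end{pmatrix}.
\]
Its eigenvalues are $\pm$ the singular values of $M$. So the task reduces to finding the eigenvalues of $MM^{T}$ (multiplicities included). Since $M$ is square, no extra zero eigenvalues appear.

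First we compute $MM^{T}$ entrywise. We have
\[
(MM^{T})_{ii'}=\#\{j: i+j\in D,\ i'+j\in D\}.
\]
For $i=i'$ this equals $|D|=q+1$. For $i\neq i'$, each such $j$ gives a representation $(i+j)-(i'+j)=i-i'$ as a difference of elements of $D$. Conversely, each representation $d-d'=i-i'$ determines $j=d-i$. By Theorem~\ref{thm:singer-difference-set} with $\lambda=1$, this count is exactly $1$. Hence
\[
MM^{T}=qI+J,
\]
where $J$ is the all-ones matrix. This is the only step with real content: it is the standard projective-plane identity, and the point to check is that the bijection between common neighbours and difference representations respects the ordered counting in Definition~\ref{def:difference-set}.

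Next, $J$ has eigenvalue $v$ once, on the constant vector, and $0$ with multiplicity $v-1$. So $MM^{T}$ has eigenvalues $q+v=(q+1)^2$ once and $q$ with multiplicity $v-1$. The singular values of $M$ are therefore $q+1$ once and $\sqrt q$ with multiplicity $v-1$; note they are all nonzero since $q\ge 2$.

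To pass to $A$, we use the SVD $M=U\Sigma V^{*}$. The vectors $(u_k,\pm v_k)/\sqrt2$ are eigenvectors of $A$ with eigenvalues $\pm\sigma_k$, and they span $\mathbb C^{2v}$. This gives exactly the stated multiset: $\pm(q+1)$ once each and $\pm\sqrt q$ with multiplicity $v-1$ each. Summing absolute values yields
\[
\mathcal E(G_q)=2(q+1)+2(v-1)\sqrt q.
\]

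As an alternative check, the Fourier picture of Proposition~\ref{prop:cube-projectors} applies. The $2\times2$ blocks have entries $\widehat{\mathbf 1_S}(n)$ with $|\widehat{\mathbf 1_D}|^2=q$ for the nonzero frequencies, which gives the same spectrum. We will use the combinatorial route as the main argument.
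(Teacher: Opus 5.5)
Your proposal is correct and follows essentially the same route as the paper: you use the $\lambda=1$ difference-set property to get $MM^{T}=qI+J$, read off the singular values $q+1$ and $\sqrt q$ with their multiplicities, and pass to $\pm$ these values through the bipartite block form of the adjacency matrix. Your explicit bijection between common neighbours and ordered difference representations, and the SVD eigenvector construction $(u_k,\pm v_k)/\sqrt2$, fill in details that the paper leaves implicit.
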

\begin{proof}
Let $M$ be the $v\times v$ bipartite incidence matrix defined by
\[
M_{i,j}=1 \quad\Longleftrightarrow\quad i+j\in D.
\]
For $i,i'\in\mathbb Z_v$, the $(i,i')$-entry of $MM^T$ counts the number of $j$ such that both $i+j$ and $i'+j$ belong to $D$. This number is $q+1$ when $i=i'$ and is $1$ when $i\neq i'$, by the difference-set property. Hence
\[
MM^T=qI+J.
\]
The singular values of $M$ are therefore $q+1$ with multiplicity $1$ and $\sqrt q$ with multiplicity $v-1$. Since the adjacency matrix of $G_q$ is
\[
\begin{pmatrix}
0&M\\
M^T&0
\end{pmatrix},
\]
its eigenvalues are the positive and negative singular values of $M$, with the asserted multiplicities. Summing their absolute values gives the energy formula.
\end{proof}

The smallest member of this family is the Heawood graph.

\begin{corollary}\label{cor:q2-heawood}
The graph $G_2$ in Theorem~\ref{thm:singer-levi} is the Heawood graph. Consequently,
\[
\operatorname{FR}_{\min}(G_2)
=\frac{6+12\sqrt2}{\sqrt{42}}.
\]
\end{corollary}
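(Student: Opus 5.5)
The corollary has two parts: identify $G_2$ with the Heawood graph, then substitute $q=2$ into the results already proved.

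For the identification, set $q=2$. Then $v=7$, and a Singer $(7,3,1)$-difference set in $\mathbb Z_7$ is, for example, $D=\{0,1,3\}$. One can check directly that the nonzero differences $1,3,2,6,4,5$ each occur exactly once. By Theorem~\ref{thm:singer-levi}, $G_2$ is the Levi graph of $D$: it has $14$ vertices, is bipartite, is $3$-regular, and has no $4$-cycles.

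Two routes then give the Heawood graph.
\begin{itemize}
\item \emph{Incidence-graph argument.} $G_2$ is the point--line incidence graph of the Fano plane. The points are $i\in\mathbb Z_7$. The lines are the translates $D-j$: vertex $L_j$ is adjacent to $P_i$ exactly when $i\in D-j$. Because $D$ is a $(7,3,1)$-difference set, these translates form a projective plane of order $2$, namely any two points lie on exactly one common line. Since the projective plane of order $2$ is unique up to isomorphism, $G_2$ is its incidence graph, which is by definition the Heawood graph.
\item \emph{Cage argument.} $G_2$ is cubic and bipartite, so it has no odd cycles. It has no $4$-cycles. One checks that it has a $6$-cycle, for example the triangle of the Fano plane formed by three non-collinear points and the three lines joining them. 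So $G_2$ is a cubic graph of girth $6$ on $14$ vertices. The $(3,6)$-cage is unique and has $14$ vertices, and it is the Heawood graph.
\end{itemize}
I would present the first route, since it uses only the difference-set structure already recorded in the paper. The only external input is the classical uniqueness of the Fano plane, which I would cite.

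For the numerical value, Proposition~\ref{prop:energy-singer} with $q=2$ and $v=7$ gives
\[
\mathcal E(G_2)=2\cdot 3+2\cdot 6\sqrt2=6+12\sqrt2 .
\]
The graph is $3$-regular on $14$ vertices, so $s_2=21$ and $\sqrt{2s_2}=\sqrt{42}$. Theorem~\ref{thm:singer-levi} states that equality holds in the energy bound, which gives $\operatorname{FR}_{\min}(G_2)=(6+12\sqrt2)/\sqrt{42}$.

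The only non-routine step is the identification with the Heawood graph. Both routes depend on a classical uniqueness result, either for the Fano plane or for the $(3,6)$-cage, and these should be cited rather than reproved.
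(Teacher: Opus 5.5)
Your proposal is correct and follows the paper's proof. You identify $G_2$ as the Levi graph of the Fano plane via a $(7,3,1)$-difference set, then substitute $q=2$, $v=7$, $s_2=21$ into Proposition~\ref{prop:energy-singer} and Theorem~\ref{thm:singer-levi}. Your choice $D=\{0,1,3\}$ instead of the paper's $\{0,2,6\}=2\cdot\{0,1,3\}$ is immaterial, since your argument uses only the difference-set property and the uniqueness of the Fano plane, which the paper relies on implicitly through its citation.
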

\begin{proof}
For $q=2$, we have $v=7$, and
\[
D=\{0,2,6\}\subset\mathbb Z_7
\]
is a $(7,3,1)$-difference set: its six nonzero ordered differences are precisely the six nonzero elements of $\mathbb Z_7$. The associated symmetric $(7,3,1)$ design is the projective plane of order $2$, namely the Fano plane, and its Levi graph is the Heawood graph; see, for example, \cite{BJL}. Proposition~\ref{prop:energy-singer} gives
\[
\mathcal E(G_2)=6+12\sqrt2.
\]
Since $G_2$ is $3$-regular on $14$ vertices, it has $21$ edges, and Theorem~\ref{thm:singer-levi} completes the calculation.
\end{proof}

\section{Edge complexity and graph products}\label{section:products}

Graph products provide a systematic way of generating larger graphs from smaller ones. In the present setting a minor arithmetic issue arises: edge complexity is defined using the cyclic group $\mathbb Z_N$, whereas the natural labeling of a product graph is by $\mathbb Z_{N_1}\times\mathbb Z_{N_2}$. We therefore assume throughout the Fourier-ratio statements in this section that $\gcd(N_1,N_2)=1$. The Chinese remainder theorem then identifies these groups, and the Fourier transform on $\mathbb Z_{N_1N_2}$ agrees with the product Fourier transform up to independent permutations of the physical and frequency variables. Such permutations preserve the entrywise $\ell^1$-norm and the Frobenius norm. Throughout this section, $\widehat A$ again denotes the original transform $FAF$.

\subsection{Fourier-ratio bounds}

\begin{definition}
Let $G$ and $H$ be graphs. Their weak product $G\times H$ has vertex set $V(G)\times V(H)$, and $(g_1,h_1)$ is adjacent to $(g_2,h_2)$ if and only if $g_1$ is adjacent to $g_2$ in $G$ and $h_1$ is adjacent to $h_2$ in $H$.
\end{definition}

\begin{definition}
The Cartesian product $G\square H$ has vertex set $V(G)\times V(H)$, and $(g_1,h_1)$ is adjacent to $(g_2,h_2)$ if and only if either $g_1=g_2$ and $h_1$ is adjacent to $h_2$, or $h_1=h_2$ and $g_1$ is adjacent to $g_2$.
\end{definition}

\begin{definition}
The strong product $G\boxtimes H$ is obtained by taking the union of the weak and Cartesian products on the common vertex set $V(G)\times V(H)$.
\end{definition}

\begin{proposition}\label{prop:weak-product-fr}
Let $G$ and $H$ be graphs of coprime orders $N_1$ and $N_2$, each with at least one edge. Then
\[
\operatorname{FR}_{\min}(G\times H)
\leq
\operatorname{FR}_{\min}(G)\operatorname{FR}_{\min}(H).
\]
\end{proposition}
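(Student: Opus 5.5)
Pick optimal labelings $\sigma$ of $G$ by $\mathbb Z_{N_1}$ and $\tau$ of $H$ by $\mathbb Z_{N_2}$, with adjacency matrices $A=A_\sigma$ and $B=A_\tau$ attaining $\operatorname{FR}_{\min}(G)$ and $\operatorname{FR}_{\min}(H)$. The weak product $G\times H$ labeled by $\mathbb Z_{N_1}\times\mathbb Z_{N_2}$ has edge indicator
\[
h((x_1,x_2),(y_1,y_2))=A(x_1,y_1)\,B(x_2,y_2),
\]
that is, its adjacency matrix is the Kronecker product $A\otimes B$. I will transport this to a labeling of $G\times H$ by $\mathbb Z_{N_1N_2}$ and show that its Fourier ratio is exactly $\operatorname{FR}(A)\operatorname{FR}(B)$. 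Minimizing over labelings then gives the inequality.

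First I handle the transport through the Chinese remainder theorem. Because $\gcd(N_1,N_2)=1$, the map $\phi:\mathbb Z_{N_1N_2}\to\mathbb Z_{N_1}\times\mathbb Z_{N_2}$, $z\mapsto(z\bmod N_1,z\bmod N_2)$, is a group isomorphism, and this defines the labeling. Every character of $\mathbb Z_{N_1N_2}$ factors as $e^{-2\pi i mz/(N_1N_2)}=e^{-2\pi i m_1z_1/N_1}e^{-2\pi i m_2z_2/N_2}$ for a unique pair $(m_1,m_2)$, where $z_j=z\bmod N_j$. The pair is determined by a bijection $\psi$ of frequencies; explicitly, $m_1\equiv mN_2^{-1}\pmod{N_1}$, or a similar formula depending on convention. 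So the one-dimensional DFT on $\mathbb Z_{N_1N_2}$ equals $F_{N_1}\otimes F_{N_2}$ up to a permutation of rows given by $\psi$ and a permutation of columns given by $\phi$. The normalizations $1/\sqrt{N_1N_2}=1/\sqrt{N_1}\cdot1/\sqrt{N_2}$ match. Hence the two-dimensional transform of $h$ on $\mathbb Z_{N_1N_2}^2$ is, up to independent permutations of rows and columns, the matrix $\widehat A\otimes\widehat B$.

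Next I compute the norms. Permutations preserve $\|\cdot\|_1$ and $\|\cdot\|_2$. For Kronecker products the entrywise norms are multiplicative, $\|\widehat A\otimes\widehat B\|_1=\|\widehat A\|_1\|\widehat B\|_1$ and $\|\widehat A\otimes\widehat B\|_2=\|\widehat A\|_2\|\widehat B\|_2$. Both norms of $\widehat B$ are nonzero because each graph has an edge, so the ratio factors:
\[
\operatorname{FR}_{\min}(G\times H)\le\operatorname{FR}(h)=\operatorname{FR}(A)\operatorname{FR}(B)=\operatorname{FR}_{\min}(G)\operatorname{FR}_{\min}(H).
\]

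The main obstacle is verifying carefully that the CRT identification turns the $\mathbb Z_{N_1N_2}$ Fourier matrix into $F_{N_1}\otimes F_{N_2}$ up to permutations only, with no extra phases. The key step is checking that $mz/(N_1N_2)\equiv m_1z_1/N_1+m_2z_2/N_2 \pmod 1$ for a bijective choice of $(m_1,m_2)$. I would write $z\equiv z_1N_2\bar N_2+z_2N_1\bar N_1\pmod{N_1N_2}$, where $\bar N_2$ is the inverse of $N_2$ modulo $N_1$ and $\bar N_1$ is the inverse of $N_1$ modulo $N_2$. This gives $m_1=m\bar N_2$ and $m_2=m\bar N_1$, which is a bijection by CRT. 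Because the same $F$ acts on both variables, the argument applies to both sides of $FhF$.
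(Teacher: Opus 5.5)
Your proposal is correct and follows the paper's proof: you tensor the optimal labelings, transport via the Chinese remainder theorem so that the transform becomes $\widehat A\otimes\widehat B$ up to row and column permutations, and then use multiplicativity of the entrywise $\ell^1$ and Frobenius norms under Kronecker products. Your phase identity $mz/(N_1N_2)\equiv m_1z_1/N_1+m_2z_2/N_2 \pmod 1$ with $m_1=m\bar N_2$, $m_2=m\bar N_1$ supplies the detail the paper leaves implicit. Computing $\widehat h(m,n)$ entrywise with it gives $\widehat h(m,n)=\widehat A(m_1,n_1)\,\widehat B(m_2,n_2)$, so the same frequency bijection acts on rows and columns.
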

\begin{proof}
Choose minimizing labelings of $G$ and $H$ by $\mathbb Z_{N_1}$ and $\mathbb Z_{N_2}$. Under the induced product labeling, the edge indicator of $G\times H$ is the tensor product of the two edge indicators. The Chinese remainder theorem transports this labeling to $\mathbb Z_{N_1N_2}$. On the Fourier side, the transformed adjacency matrix is, up to independent permutations of rows and columns,
\[
\widehat A_G\otimes\widehat A_H.
\]
Both the entrywise $\ell^1$-norm and the Frobenius norm multiply under tensor products. Hence the Fourier ratio of this labeling is
\[
\operatorname{FR}(A_G)\operatorname{FR}(A_H),
\]
and minimizing over all labelings of the product proves the result.
\end{proof}

\begin{proposition}\label{prop:cartesian-product-fr}
Let $G$ and $H$ be graphs of coprime orders $N_1,N_2$ and positive edge counts $s_G,s_H$. Then
\[
\operatorname{FR}_{\min}(G\square H)
\leq
\frac{N_2\sqrt{s_G}\operatorname{FR}_{\min}(G)
+N_1\sqrt{s_H}\operatorname{FR}_{\min}(H)}
{\sqrt{N_2s_G+N_1s_H}}.
\]
\end{proposition}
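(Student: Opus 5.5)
Choose labelings of $G$ and $H$ by $\mathbb Z_{N_1}$ and $\mathbb Z_{N_2}$ that attain $\operatorname{FR}_{\min}(G)$ and $\operatorname{FR}_{\min}(H)$, with adjacency matrices $A_G$ and $A_H$. Under the product labeling, the adjacency matrix of $G\square H$ is
\[
A=A_G\otimes I_{N_2}+I_{N_1}\otimes A_H .
\]
As in Proposition~\ref{prop:weak-product-fr}, the Chinese remainder theorem transports this labeling to $\mathbb Z_{N_1N_2}$. On the Fourier side, $\widehat A$ then agrees with the product transform up to independent permutations of rows and columns. Since the Fourier transform is linear and multiplicative under tensor products, that product transform is
\[
\widehat{A_G}\otimes\widehat{I_{N_2}}+\widehat{I_{N_1}}\otimes\widehat{A_H}.
\]
With the convention $\widehat A=FAF$, we have $\widehat{I_n}=F^2$, which is the reflection permutation matrix. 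So $\widehat{I_n}$ has exactly $n$ entries of modulus one and $\|\widehat{I_n}\|_1=n$.

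For the numerator, apply the triangle inequality for the entrywise $\ell^1$-norm and then multiplicativity under tensor products:
\[
\|\widehat A\|_1\le \|\widehat{A_G}\|_1\,N_2+N_1\,\|\widehat{A_H}\|_1 .
\]
Next write $\|\widehat{A_G}\|_1=\operatorname{FR}_{\min}(G)\,\|\widehat{A_G}\|_2$. By Parseval, $\|\widehat{A_G}\|_2=\|A_G\|_2=\sqrt{2s_G}$, and similarly $\|\widehat{A_H}\|_2=\sqrt{2s_H}$.

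For the denominator, use Parseval again: $\|\widehat A\|_2=\|A\|_2$. The two summands $A_G\otimes I$ and $I\otimes A_H$ are $0/1$ matrices with disjoint supports. The first is supported where $h_1=h_2$ and $g_1\ne g_2$, the second where $g_1=g_2$, because $A_G$ and $A_H$ have zero diagonal. Hence
\[
\|A\|_2^2=2s_GN_2+2s_HN_1 .
\]
This step needs care: it is the only place where no crude inequality is allowed, and disjointness of supports is exactly what makes the Frobenius norm add.

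Dividing the two bounds, the factors of $\sqrt2$ cancel and we obtain
\[
\operatorname{FR}(A)\le \frac{N_2\sqrt{s_G}\operatorname{FR}_{\min}(G)+N_1\sqrt{s_H}\operatorname{FR}_{\min}(H)}{\sqrt{N_2s_G+N_1s_H}} .
\]
Minimizing over labelings of $G\square H$ gives the claim.

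The main obstacle is bookkeeping rather than substance. One must confirm that the CRT identification sends the Cartesian-product matrix to the stated tensor expression with the same row and column permutations applied to both summands, so that the triangle inequality is applied to a genuine sum of matrices. This holds because both summands are transported by the same relabeling. One must also track the reflection $F^2$ coming from $\widehat{I}$, which affects only positions and not $\ell^1$-norms.
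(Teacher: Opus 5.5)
Your proposal is correct and follows the paper's proof essentially step for step. It uses the same decomposition $A_{G\square H}=A_G\otimes I_{N_2}+I_{N_1}\otimes A_H$, the fact that $\widehat{I_n}=F^2$ has $\ell^1$-norm $n$, the triangle inequality after the Chinese remainder identification, and Parseval with the edge count $N_2s_G+N_1s_H$; your remark that both summands are transported by the same relabeling, so the triangle inequality is applied to a genuine matrix sum, makes explicit a point the paper leaves implicit.
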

\begin{proof}
Under the product labeling,
\[
A_{G\square H}=A_G\otimes I_{N_2}+I_{N_1}\otimes A_H.
\]
Under this convention, the Fourier transform of $I_N$ is the reflection permutation matrix $F^2$. In particular,
\[
\|\widehat I_N\|_1=N,
\qquad
\|\widehat I_N\|_2=\sqrt N.
\]
After applying the Chinese remainder identification and the triangle inequality,
\[
\|\widehat A_{G\square H}\|_1
\leq
N_2\|\widehat A_G\|_1+N_1\|\widehat A_H\|_1.
\]
The Cartesian product has $N_2s_G+N_1s_H$ edges, so Parseval gives
\[
\|\widehat A_{G\square H}\|_2
=\sqrt{2(N_2s_G+N_1s_H)}.
\]
Substitution of
\(
\|\widehat A_G\|_1=\sqrt{2s_G}\operatorname{FR}_{\min}(G)
\)
and the analogous identity for $H$ proves the claim.
\end{proof}

\begin{proposition}\label{prop:strong-product-fr}
Let $G$ and $H$ be graphs of coprime orders $N_1,N_2$ and positive edge counts $s_G,s_H$. Then
\[
\operatorname{FR}_{\min}(G\boxtimes H)
\leq
\frac{
N_2\sqrt{s_G}\operatorname{FR}_{\min}(G)
+N_1\sqrt{s_H}\operatorname{FR}_{\min}(H)
+\sqrt{2s_Gs_H}\operatorname{FR}_{\min}(G)\operatorname{FR}_{\min}(H)}
{\sqrt{N_2s_G+N_1s_H+2s_Gs_H}}.
\]
\end{proposition}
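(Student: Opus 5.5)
The plan follows the proof of Proposition~\ref{prop:cartesian-product-fr}, adding the weak-product term from Proposition~\ref{prop:weak-product-fr}. Fix labelings of $G$ and $H$ by $\mathbb Z_{N_1}$ and $\mathbb Z_{N_2}$ that minimize the Fourier ratio, and give $G\boxtimes H$ the induced product labeling. Since the strong product is the edge-disjoint union of the weak and Cartesian products, its adjacency matrix is
\[
A_{G\boxtimes H}=A_G\otimes I_{N_2}+I_{N_1}\otimes A_H+A_G\otimes A_H.
\]
The Chinese remainder identification of $\mathbb Z_{N_1}\times\mathbb Z_{N_2}$ with $\mathbb Z_{N_1N_2}$ makes the Fourier transform on $\mathbb Z_{N_1N_2}$ agree with the tensor product of the factor transforms, up to independent row and column permutations. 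These permutations are the same for all three terms, so the triangle inequality can be applied after transforming.

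For the numerator, I will use $\|\widehat I_N\|_1=N$, which holds because $\widehat I_N$ is the reflection permutation, and the multiplicativity of the entrywise $\ell^1$-norm under tensor products. This gives
\[
\|\widehat A_{G\boxtimes H}\|_1\le N_2\|\widehat A_G\|_1+N_1\|\widehat A_H\|_1+\|\widehat A_G\|_1\|\widehat A_H\|_1 .
\]
Next substitute $\|\widehat A_G\|_1=\sqrt{2s_G}\operatorname{FR}_{\min}(G)$ and $\|\widehat A_H\|_1=\sqrt{2s_H}\operatorname{FR}_{\min}(H)$. The first two terms become $\sqrt2\,N_2\sqrt{s_G}\operatorname{FR}_{\min}(G)+\sqrt2\,N_1\sqrt{s_H}\operatorname{FR}_{\min}(H)$. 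The cross term becomes $2\sqrt{s_Gs_H}\operatorname{FR}_{\min}(G)\operatorname{FR}_{\min}(H)=\sqrt2\cdot\sqrt{2s_Gs_H}\operatorname{FR}_{\min}(G)\operatorname{FR}_{\min}(H)$.

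For the denominator, I will count edges. The Cartesian part has $N_2s_G+N_1s_H$ edges and the weak part has $2s_Gs_H$ edges, since each pair of edges $\{g_1,g_2\}$ and $\{h_1,h_2\}$ yields the two product edges $\{(g_1,h_1),(g_2,h_2)\}$ and $\{(g_1,h_2),(g_2,h_1)\}$. The two edge sets are disjoint, because a weak-product edge changes both coordinates and a Cartesian edge changes exactly one. Hence $A_{G\boxtimes H}$ is a $0$-$1$ matrix with $2(N_2s_G+N_1s_H+2s_Gs_H)$ ones, and Parseval gives
\[
\|\widehat A_{G\boxtimes H}\|_2=\sqrt2\,\sqrt{N_2s_G+N_1s_H+2s_Gs_H}.
\]
Dividing, the factors of $\sqrt2$ cancel and we get exactly the stated bound for this labeling. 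Minimizing over all labelings of the product finishes the proof.

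The only step that needs care is the bookkeeping: the edge count of the weak product, including the factor $2$, and the disjointness that makes the Frobenius norm exact rather than merely bounded. The coprimality hypothesis is used only to justify the Chinese remainder identification, which was already established before Proposition~\ref{prop:weak-product-fr}.
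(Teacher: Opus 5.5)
Your proposal is correct and follows the paper's proof. Both arguments decompose $A_{G\boxtimes H}$ into the three tensor terms, bound $\|\widehat A_{G\boxtimes H}\|_1$ by the triangle inequality under the Chinese remainder identification, and divide by the exact Frobenius norm $\sqrt{2(N_2s_G+N_1s_H+2s_Gs_H)}$ obtained from disjointness of the three edge sets; you simply track the $\sqrt2$ factors and the shared row and column permutations more explicitly.
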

\begin{proof}
The adjacency matrix is
\[
A_{G\boxtimes H}
=A_G\otimes I_{N_2}+I_{N_1}\otimes A_H+A_G\otimes A_H.
\]
The same product-Fourier calculation used above gives
\[
\|\widehat A_{G\boxtimes H}\|_1
\leq
N_2\|\widehat A_G\|_1
+N_1\|\widehat A_H\|_1
+\|\widehat A_G\|_1\|\widehat A_H\|_1.
\]
The three edge sets in the definition of the strong product are disjoint, and therefore
\[
s_{G\boxtimes H}=N_2s_G+N_1s_H+2s_Gs_H.
\]
Dividing by the Frobenius norm $\sqrt{2s_{G\boxtimes H}}$ gives the stated estimate.
\end{proof}

\subsection{Comparison with the energy bound}

For a graph $G$ with edge count $s_G>0$, define its energy-gap factor
\[
\kappa(G)
=
\frac{\operatorname{FR}_{\min}(G)}
{\mathcal E(G)/\sqrt{2s_G}}.
\]
Theorem~\ref{thm:energy-bound} says that $\kappa(G)\geq1$, and equality is precisely the equality problem studied in Section~\ref{section3}.

\begin{proposition}\label{prop:weak-product-energy}
Let $G$ and $H$ be graphs of coprime orders, each with at least one edge. Then
\[
\kappa(G\times H)\leq\kappa(G)\kappa(H).
\]
In particular, if both factors attain equality in Theorem~\ref{thm:energy-bound}, then so does $G\times H$.
\end{proposition}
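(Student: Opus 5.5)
We compare the upper bound of Proposition~\ref{prop:weak-product-fr} with the exact energy and edge count of the weak product, both of which factor multiplicatively. First, the adjacency matrix of $G\times H$ under the product labeling is $A_G\otimes A_H$. Its eigenvalues are all products $\lambda_i(G)\lambda_j(H)$, so
\[
\mathcal E(G\times H)=\sum_{i,j}|\lambda_i(G)||\lambda_j(H)|=\mathcal E(G)\mathcal E(H).
\]
Equivalently, the nuclear norm is multiplicative under tensor products. Second, $A_G\otimes A_H$ has exactly $(2s_G)(2s_H)$ entries equal to one, so $2s_{G\times H}=(2s_G)(2s_H)$. Since $G$ and $H$ each have an edge, $s_{G\times H}>0$ and $\kappa(G\times H)$ is defined. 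Together these give
\[
\frac{\mathcal E(G\times H)}{\sqrt{2s_{G\times H}}}
=\frac{\mathcal E(G)}{\sqrt{2s_G}}\cdot\frac{\mathcal E(H)}{\sqrt{2s_H}}.
\]

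Dividing the inequality $\operatorname{FR}_{\min}(G\times H)\leq\operatorname{FR}_{\min}(G)\operatorname{FR}_{\min}(H)$ from Proposition~\ref{prop:weak-product-fr} by this product yields $\kappa(G\times H)\leq\kappa(G)\kappa(H)$. The coprimality hypothesis enters only through that proposition, which requires the Chinese remainder identification so that $\operatorname{FR}_{\min}$ of the product is computed over labelings by the cyclic group $\mathbb Z_{N_1N_2}$. The energy identity itself does not depend on the labeling.

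For the equality statement, suppose $\kappa(G)=\kappa(H)=1$. Then $\kappa(G\times H)\leq1$. Theorem~\ref{thm:energy-bound} gives $\kappa(G\times H)\geq1$, so $\kappa(G\times H)=1$. As a consistency check, one can also argue via Theorem~\ref{thm:equality-criterion}. The Fourier transform of the product labeling is, up to independent row and column permutations, $\widehat A_G\otimes\widehat A_H$. A tensor product of matrices with at most one nonzero entry per row and column has the same property, and row and column permutations preserve it.

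The only point needing care is the energy multiplicativity, which rests on the spectrum of a Kronecker product of symmetric matrices. I expect no real obstacle there; the substantive work was already done in Proposition~\ref{prop:weak-product-fr}.
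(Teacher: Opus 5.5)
Your proposal is correct and follows the paper's proof essentially verbatim: you derive $\mathcal E(G\times H)=\mathcal E(G)\mathcal E(H)$ and $s_{G\times H}=2s_Gs_H$ from the Kronecker structure $A_G\otimes A_H$, divide the bound of Proposition~\ref{prop:weak-product-fr} by the resulting multiplicative energy ratio, and settle the equality case with the universal lower bound $\kappa\geq1$. Your extra check via Theorem~\ref{thm:equality-criterion} is also valid but not needed; it uses the fact that a tensor product of matrices with at most one nonzero entry per row and column has the same property.
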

\begin{proof}
The adjacency matrix of the weak product is $A_G\otimes A_H$. Hence
\[
\mathcal E(G\times H)=\mathcal E(G)\mathcal E(H),
\qquad
s_{G\times H}=2s_Gs_H.
\]
Combining these identities with Proposition~\ref{prop:weak-product-fr} gives the first assertion. If $\kappa(G)=\kappa(H)=1$, the resulting upper bound for $\operatorname{FR}_{\min}(G\times H)$ coincides with the universal energy lower bound, proving equality.
\end{proof}

\begin{proposition}\label{prop:cartesian-energy}
Let $G$ and $H$ be graphs of orders $N_1$ and $N_2$, respectively, each with at least one edge. Then
\[
\mathcal E(G\square H)
\geq
\frac12\bigl(N_2\mathcal E(G)+N_1\mathcal E(H)\bigr).
\]
If in addition $N_1$ and $N_2$ are coprime, then
\[
\kappa(G\square H)
\leq2\max\{\kappa(G),\kappa(H)\}.
\]
\end{proposition}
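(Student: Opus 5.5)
The plan has two parts: a spectral lower bound for the Cartesian energy, and then a substitution of that bound into Proposition~\ref{prop:cartesian-product-fr}.

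\textbf{Energy inequality.} The eigenvalues of $A_{G\square H}=A_G\otimes I_{N_2}+I_{N_1}\otimes A_H$ are the sums $\lambda_i+\mu_j$, where $\lambda_i$ ranges over the spectrum of $G$ and $\mu_j$ over that of $H$. Hence
\[
\mathcal E(G\square H)=\sum_{i,j}|\lambda_i+\mu_j|.
\]
For reals, $|a+b|+|a-b|=2\max\{|a|,|b|\}\geq|a|+|b|$, but the spectrum of $H$ need not be symmetric, so this cannot be paired directly. Instead I will average over $j$ using the vanishing trace $\sum_j\mu_j=0$. By the triangle inequality, $\sum_j|\lambda_i+\mu_j|\geq|N_2\lambda_i+\sum_j\mu_j|=N_2|\lambda_i|$. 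Summing over $i$ gives $\mathcal E(G\square H)\geq N_2\mathcal E(G)$, and symmetrically $\mathcal E(G\square H)\geq N_1\mathcal E(H)$. Averaging the two bounds yields the first claim. This step is elementary.

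\textbf{Bound on $\kappa$.} Assume now that $\gcd(N_1,N_2)=1$, and write $a=N_2 s_G$ and $b=N_1 s_H$. Using $\sqrt{2s_G}\operatorname{FR}_{\min}(G)=\kappa(G)\mathcal E(G)$ and the analogous identity for $H$, Proposition~\ref{prop:cartesian-product-fr} becomes
\[
\operatorname{FR}_{\min}(G\square H)\leq\frac{N_2\kappa(G)\mathcal E(G)+N_1\kappa(H)\mathcal E(H)}{\sqrt{2(a+b)}}\leq\max\{\kappa(G),\kappa(H)\}\,\frac{N_2\mathcal E(G)+N_1\mathcal E(H)}{\sqrt{2(a+b)}}.
\]
Since $s_{G\square H}=a+b$, the energy lower bound for the product is $\mathcal E(G\square H)/\sqrt{2(a+b)}$. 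Dividing by it and applying the first part, $N_2\mathcal E(G)+N_1\mathcal E(H)\leq2\mathcal E(G\square H)$, we get $\kappa(G\square H)\leq2\max\{\kappa(G),\kappa(H)\}$.

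\textbf{Main obstacle.} The only delicate point is the energy inequality. The naive pairing argument fails for asymmetric spectra, and the trace-zero averaging is what makes the bound work. Everything else reduces to bookkeeping of the normalizations $\sqrt{2s}$, which are identical on both sides and cancel.
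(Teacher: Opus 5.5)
Your proposal is correct and follows the paper's proof essentially step for step. The trace-zero triangle inequality $\sum_j|\lambda_i+\mu_j|\geq N_2|\lambda_i|$ gives $\mathcal E(G\square H)\geq\max\{N_2\mathcal E(G),N_1\mathcal E(H)\}$, and substituting $\sqrt{2s_G}\operatorname{FR}_{\min}(G)=\kappa(G)\mathcal E(G)$ into Proposition~\ref{prop:cartesian-product-fr} yields the factor $2\max\{\kappa(G),\kappa(H)\}$, exactly as in the paper (you average the two one-sided bounds directly, while the paper passes through the maximum first).
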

\begin{proof}
Let $\lambda_1,\dots,\lambda_{N_1}$ and $\mu_1,\dots,\mu_{N_2}$ be the adjacency eigenvalues of $G$ and $H$. The eigenvalues of the Cartesian product are $\lambda_i+\mu_j$. Since the adjacency matrix of $H$ has trace zero, for each fixed $i$ the triangle inequality gives
\[
\sum_{j=1}^{N_2}|\lambda_i+\mu_j|
\geq
\left|\sum_{j=1}^{N_2}(\lambda_i+\mu_j)\right|
=N_2|\lambda_i|.
\]
Summing in $i$ yields
\[
\mathcal E(G\square H)\geq N_2\mathcal E(G).
\]
By symmetry,
\[
\mathcal E(G\square H)\geq N_1\mathcal E(H).
\]
Consequently,
\[
\mathcal E(G\square H)
\geq
\max\{N_2\mathcal E(G),N_1\mathcal E(H)\}
\geq
\frac12\bigl(N_2\mathcal E(G)+N_1\mathcal E(H)\bigr).
\]
When the orders are coprime, Proposition~\ref{prop:cartesian-product-fr} gives
\[
\operatorname{FR}_{\min}(G\square H)
\leq
\frac{N_2\kappa(G)\mathcal E(G)+N_1\kappa(H)\mathcal E(H)}
{\sqrt{2s_{G\square H}}}.
\]
The numerator is at most
\[
2\max\{\kappa(G),\kappa(H)\}\mathcal E(G\square H),
\]
which proves the second assertion.
\end{proof}

\begin{proposition}\label{prop:strong-energy}
Let $G$ and $H$ be graphs of orders $N_1$ and $N_2$, respectively, each with at least one edge, and define
\[
r(G,H)=\frac{N_1}{\mathcal E(G)}+\frac{N_2}{\mathcal E(H)}.
\]
Then
\[
\mathcal E(G\boxtimes H)
\geq
\mathcal E(G)\mathcal E(H)\bigl(1-r(G,H)\bigr).
\]
If the orders are coprime and $r(G,H)<1$, then
\[
\kappa(G\boxtimes H)
\leq
\kappa(G)\kappa(H)
\frac{1+r(G,H)}{1-r(G,H)}.
\]
\end{proposition}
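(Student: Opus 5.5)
The lower bound on the energy follows by treating the strong product adjacency matrix as a perturbation of the weak product and applying the triangle inequality for the Schatten $1$-norm. I would write
\[
A_{G\boxtimes H}=A_G\otimes A_H+\bigl(A_G\otimes I_{N_2}+I_{N_1}\otimes A_H\bigr).
\]
Then
\[
\mathcal E(G\boxtimes H)\geq \|A_G\otimes A_H\|_{S_1}-\|A_G\otimes I_{N_2}\|_{S_1}-\|I_{N_1}\otimes A_H\|_{S_1}.
\]
Singular values multiply under Kronecker products, so these three norms are $\mathcal E(G)\mathcal E(H)$, $N_2\mathcal E(G)$ and $N_1\mathcal E(H)$. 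Factoring out $\mathcal E(G)\mathcal E(H)$ turns the subtracted terms into $N_2/\mathcal E(H)+N_1/\mathcal E(G)=r(G,H)$, which gives the stated inequality. An alternative route is through the eigenvalues $(1+\lambda_i)(1+\mu_j)-1$, but the norm argument avoids any case analysis.

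For the second assertion I would start from Proposition~\ref{prop:strong-product-fr}. Its numerator is $N_2\|\widehat A_G\|_1+N_1\|\widehat A_H\|_1+\|\widehat A_G\|_1\|\widehat A_H\|_1$ and its denominator is $\sqrt{2s_{G\boxtimes H}}$, both evaluated at minimizing labelings. By the definition of $\kappa$, $\|\widehat A_G\|_1=\kappa(G)\mathcal E(G)$ and likewise for $H$. The numerator then equals
\[
\kappa(G)\kappa(H)\mathcal E(G)\mathcal E(H)\left(1+\frac{N_2}{\kappa(H)\mathcal E(H)}+\frac{N_1}{\kappa(G)\mathcal E(G)}\right).
\]
Since $\kappa\geq1$ by Theorem~\ref{thm:energy-bound}, this is at most $\kappa(G)\kappa(H)\mathcal E(G)\mathcal E(H)(1+r(G,H))$.

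Dividing by $\mathcal E(G\boxtimes H)/\sqrt{2s_{G\boxtimes H}}$ cancels the Frobenius factors. Because $r<1$, the first part gives $\mathcal E(G\boxtimes H)\geq\mathcal E(G)\mathcal E(H)(1-r)>0$, and substituting this yields the ratio $(1+r)/(1-r)$. Coprimality is used only to invoke Proposition~\ref{prop:strong-product-fr}.

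The main point requiring care is matching the weights. In the $\ell^1$ bound, $N_2$ multiplies the $G$-term, so after factoring the weight $N_2$ sits over $\mathcal E(H)$, and the same happens in the energy bound. Both bounds therefore produce the same symmetric quantity $r(G,H)$, and the argument needs no further work.
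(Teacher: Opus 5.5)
Your proposal is correct and follows essentially the same route as the paper. The only difference is that you apply the triangle inequality for the nuclear norm to the Kronecker decomposition, whereas the paper applies the reverse triangle inequality eigenvalue-by-eigenvalue to $\lambda_i\mu_j+\lambda_i+\mu_j$; the two are equivalent here because the three summands commute. Your second part, using Proposition~\ref{prop:strong-product-fr}, $\kappa\geq1$, and division by the energy bound, is identical to the paper's.
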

\begin{proof}
The eigenvalues of the strong product are
\[
\lambda_i\mu_j+\lambda_i+\mu_j.
\]
The reverse triangle inequality therefore gives
\begin{align*}
\mathcal E(G\boxtimes H)
&\geq
\sum_{i,j}\bigl(|\lambda_i\mu_j|-|\lambda_i|-|\mu_j|\bigr)\\
&=
\mathcal E(G)\mathcal E(H)-N_2\mathcal E(G)-N_1\mathcal E(H)\\
&=
\mathcal E(G)\mathcal E(H)\bigl(1-r(G,H)\bigr).
\end{align*}
Assume now that the orders are coprime. Since $\kappa(G),\kappa(H)\geq1$, Proposition~\ref{prop:strong-product-fr} implies
\[
\operatorname{FR}_{\min}(G\boxtimes H)
\leq
\frac{\kappa(G)\kappa(H)\mathcal E(G)\mathcal E(H)(1+r(G,H))}
{\sqrt{2s_{G\boxtimes H}}}.
\]
When $r(G,H)<1$, the preceding energy estimate converts this into the stated bound for $\kappa(G\boxtimes H)$.
\end{proof}

\begin{corollary}\label{cor:iterated-weak-products}
Let $G_1,\dots,G_k$ be equality-attaining graphs whose orders are pairwise coprime. Then the iterated weak product
\[
G_1\times\cdots\times G_k
\]
also attains equality in Theorem~\ref{thm:energy-bound}.
\end{corollary}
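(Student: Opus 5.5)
The plan is induction on $k$, with Proposition~\ref{prop:weak-product-energy} as the inductive step. For $k=1$ there is nothing to prove. Assume $G_1\times\cdots\times G_{k-1}$ attains equality in Theorem~\ref{thm:energy-bound}, and write $P_{k-1}$ for this product. Its order is $N_1\cdots N_{k-1}$. Because the orders are pairwise coprime, this number is coprime to $N_k$: any prime dividing both $N_k$ and the product would divide some $N_i$ with $i<k$, which is impossible.

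We also need $P_{k-1}$ to have at least one edge, since Proposition~\ref{prop:weak-product-energy} requires each factor to have an edge. This holds because the weak product of two graphs with edges has edges: if $g_1\sim g_2$ and $h_1\sim h_2$, then $(g_1,h_1)\sim(g_2,h_2)$. Equivalently, $s_{G\times H}=2s_Gs_H>0$. By induction every partial product has at least one edge.

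Next, apply Proposition~\ref{prop:weak-product-energy} to the pair $P_{k-1}$ and $G_k$. This gives $\kappa(P_{k-1}\times G_k)\le\kappa(P_{k-1})\kappa(G_k)=1$. Theorem~\ref{thm:energy-bound} gives $\kappa\ge1$, so $\kappa(P_{k-1}\times G_k)=1$, which is equality.

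Finally, $P_{k-1}\times G_k$ should be identified with $G_1\times\cdots\times G_k$. The weak product is associative up to the canonical isomorphism $((g_1,\dots,g_{k-1}),g_k)\mapsto(g_1,\dots,g_k)$. Since $\operatorname{FR}_{\min}$, energy, and edge count are all isomorphism invariants, equality passes to the iterated product. I expect no real obstacle here. The only points that need care are the coprimality of the partial product with the next factor and the nonemptiness of its edge set, both handled above.
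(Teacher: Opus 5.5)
Your proof is correct and follows the paper's argument: induction using Proposition~\ref{prop:weak-product-energy}, where pairwise coprimality ensures that each accumulated product has order coprime to the next factor. Two details the paper leaves implicit are handled explicitly in your version: that every partial product has an edge (via $s_{G\times H}=2s_Gs_H>0$), and that the weak product is associative up to an isomorphism preserving $\operatorname{FR}_{\min}$, energy, and edge count.
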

\begin{proof}
Apply Proposition~\ref{prop:weak-product-energy} inductively. Pairwise coprimality ensures that at every stage the order of the accumulated product is coprime to the order of the next factor.
\end{proof}

\begin{remark*}
The coprimality hypothesis is not merely a technicality in the present argument: it is what permits the product labeling to be viewed as a cyclic labeling. Proposition~\ref{prop:weak-product-energy} does not prove closure under arbitrary weak products. Consequently, unrestricted closure of the equality class under the weak product has not been established. Whether such closure holds is a natural open question.
\end{remark*}

\subsection{A related operation: graph joins}

\begin{definition}
If $G$ and $H$ have disjoint vertex sets, their join $G\vee H$ is obtained from their disjoint union by adding every edge between $V(G)$ and $V(H)$.
\end{definition}

\begin{proposition}\label{prop:join-energy}
Let $G$ and $H$ have orders $N_1,N_2$ and edge counts $s_G,s_H$. Then
\[
\mathcal E(G)+\mathcal E(H)-2\sqrt{N_1N_2}
\leq
\mathcal E(G\vee H)
\leq
\mathcal E(G)+\mathcal E(H)+2\sqrt{N_1N_2}.
\]
Consequently,
\[
\operatorname{FR}_{\min}(G\vee H)
\geq
\frac{\max\{0,\mathcal E(G)+\mathcal E(H)-2\sqrt{N_1N_2}\}}
{\sqrt{2(s_G+s_H+N_1N_2)}}.
\]
\end{proposition}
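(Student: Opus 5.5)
Write the adjacency matrix of the join in block form, with vertices of $G$ first and vertices of $H$ second:
\[
A_{G\vee H}=\begin{pmatrix}A_G&0\\0&A_H\end{pmatrix}+\begin{pmatrix}0&J_{N_1\times N_2}\\J_{N_2\times N_1}&0\end{pmatrix}=:D+K,
\]
where $J$ denotes an all-ones matrix. The plan is to apply the triangle inequality for the Schatten $1$-norm, which is a genuine norm on symmetric matrices. Since $\mathcal E(G\vee H)=\|D+K\|_{S_1}$, this gives
\[
\|D\|_{S_1}-\|K\|_{S_1}\leq\mathcal E(G\vee H)\leq\|D\|_{S_1}+\|K\|_{S_1}.
\]
The remaining work is to evaluate the two nuclear norms.

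\begin{itemize}
\item \emph{The block-diagonal part.} The singular values of $D$ are the union of those of $A_G$ and $A_H$. Hence $\|D\|_{S_1}=\mathcal E(G)+\mathcal E(H)$.
\item \emph{The off-diagonal part.} $J_{N_1\times N_2}=\mathbf 1_{N_1}\mathbf 1_{N_2}^T$ has rank one, with single singular value $\|\mathbf 1_{N_1}\|\,\|\mathbf 1_{N_2}\|=\sqrt{N_1N_2}$. A symmetric matrix of the form $\begin{pmatrix}0&X\\X^T&0\end{pmatrix}$ has eigenvalues $\pm$ the singular values of $X$, as in the proof of Proposition~\ref{prop:energy-singer}. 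So $K$ has eigenvalues $\pm\sqrt{N_1N_2}$ and zeros, and $\|K\|_{S_1}=2\sqrt{N_1N_2}$.
\end{itemize}

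Substituting these values gives the two-sided energy estimate.

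For the consequence, apply Theorem~\ref{thm:energy-bound} to $G\vee H$. The join has $s_G+s_H+N_1N_2$ edges, since the added edges between $V(G)$ and $V(H)$ are disjoint from the old ones. Therefore
\[
\operatorname{FR}_{\min}(G\vee H)\geq \frac{\mathcal E(G\vee H)}{\sqrt{2(s_G+s_H+N_1N_2)}}.
\]
Bound $\mathcal E(G\vee H)$ below by $\max\{0,\mathcal E(G)+\mathcal E(H)-2\sqrt{N_1N_2}\}$, using the lower energy estimate together with the trivial bound $\mathcal E\geq0$.

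Theorem~\ref{thm:energy-bound} presumes a positive edge count. This holds for the join whenever both $N_1,N_2\geq1$, because the $N_1N_2$ cross edges are present. No step here is a real obstacle; the only care needed is justifying that the Schatten $1$-norm satisfies the triangle inequality, which is standard, e.g.\ via the dual characterization $\|M\|_{S_1}=\max_{W\text{ unitary}}\operatorname{Re}\operatorname{tr}(W^\ast M)$ used in the proof of Theorem~\ref{thm:equality-criterion}.
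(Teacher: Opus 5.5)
Your proposal is correct and follows essentially the same route as the paper: you split $A_{G\vee H}$ into the block-diagonal part plus the all-ones off-diagonal part, whose nuclear norm is $2\sqrt{N_1N_2}$, and apply the triangle and reverse-triangle inequalities for the nuclear norm. You then finish with Theorem~\ref{thm:energy-bound} using the edge count $s_G+s_H+N_1N_2$. Your extra remarks, that $\|D\|_{S_1}=\mathcal E(G)+\mathcal E(H)$ and that the join has positive size, are correct details the paper leaves implicit.
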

\begin{proof}
With the natural block ordering,
\[
A_{G\vee H}
=
\begin{pmatrix}A_G&0\\0&A_H\end{pmatrix}
+
\begin{pmatrix}0&J\\J^T&0\end{pmatrix},
\]
where $J$ is the $N_1\times N_2$ all-ones matrix. The second summand has two nonzero eigenvalues, $\sqrt{N_1N_2}$ and $-\sqrt{N_1N_2}$, and therefore nuclear norm $2\sqrt{N_1N_2}$. The triangle and reverse-triangle inequalities for the nuclear norm give the energy estimate. The Fourier-ratio estimate follows from Theorem~\ref{thm:energy-bound} and the fact that the join has $s_G+s_H+N_1N_2$ edges.
\end{proof}

\section{Enumeration of low-complexity graphs}\label{section:enumeration}

In this section, we use Theorem~\ref{thm:recovery} in reverse. A sparse sample that recovers an adjacency matrix provides a short code for that matrix: the sampled bits determine a decoder output, and the recovery estimate confines the original matrix to a small Hamming ball around a rounded version of that output. Averaging over all matrices in a low-complexity class produces one sample that works simultaneously for a positive proportion of the class.

A related entropy problem for general signal classes was studied recently in \cite{IosevichHovhannisyanKeyshamsVagharshakyan}. That work obtains upper and lower metric-entropy bounds, as well as uniform sampling results, for layers of functions on $\mathbb Z_N$ with prescribed Fourier ratio. The present argument has a different scope and conclusion. We restrict to the finite and highly constrained class of graph adjacency matrices, incorporate minimization over vertex labelings, and use graph recovery together with Hamming-ball counting to estimate the number of low-edge-complexity graphs. These bounds are compared with the concentration argument for random graphs at the end of Section~\ref{section:random}.

For integers $N\geq2$, $1\leq q\leq\binom{N}{2}$, and $r>0$, let $\mathcal A_{N,q}(r)$ denote the set of adjacency matrices of labeled simple graphs on $\mathbb Z_N$ with exactly $q$ edges and Fourier ratio at most $r$. All logarithms from this point onward are natural.

\begin{lemma}\label{lem:common-sample}
Let $\mathcal C$ be a finite class of functions $f:\mathbb Z_N^2\to\{0,1\}$ with a common value of $\|f\|_2$. Suppose that Theorem~\ref{thm:recovery}, with fixed parameters $p$ and $\epsilon$, succeeds for every $f\in\mathcal C$ with probability at least $3/4$. Put $m=pN^2$. Then there is a deterministic set $X\subset\mathbb Z_N^2$ such that
\[
|X|\leq4m
\]
and the recovery conclusion holds for at least half of the functions in $\mathcal C$.
\end{lemma}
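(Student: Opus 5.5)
The plan is a first-moment (averaging) argument over the random sample, followed by Markov's inequality for the sample size and a union bound on the two failure events.

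Let \(X\) be the random subset of \(\mathbb Z_N^2\) that retains each point independently with probability \(p\). The decoder is the minimizer selected by the fixed tie-breaking rule in the Remark. Define the success indicator \(\mathbf 1_{\mathrm{succ}}(f,X)\), which equals \(1\) when the recovery conclusion \(\|f^\ast-f\|_2\le 11.47\epsilon\|f\|_2\) holds for \(f\) with sample \(X\). By hypothesis, \(\mathbb E_X\,\mathbf 1_{\mathrm{succ}}(f,X)\ge 3/4\) for every \(f\in\mathcal C\). Summing over \(\mathcal C\) and exchanging sum and expectation, the random variable \(Z(X)=|\{f\in\mathcal C:\text{recovery succeeds}\}|\) satisfies
\[
\mathbb E Z\ge \tfrac34|\mathcal C|.
\]
Since \(Z\le|\mathcal C|\), the variable \(|\mathcal C|-Z\) is nonnegative with mean at most \(|\mathcal C|/4\). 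Markov's inequality then gives
\[
\mathbb P\bigl(Z<\tfrac12|\mathcal C|\bigr)=\mathbb P\bigl(|\mathcal C|-Z>\tfrac12|\mathcal C|\bigr)\le\tfrac12 .
\]

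For the size of \(X\), we have \(\mathbb E|X|=pN^2=m\), so Markov gives \(\mathbb P(|X|>4m)\le 1/4\). By the union bound, with probability at least \(1-\tfrac12-\tfrac14=\tfrac14>0\), both \(|X|\le 4m\) and \(Z(X)\ge\tfrac12|\mathcal C|\) hold. Hence some realization \(X\) has both properties, and we fix it as the deterministic set.

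The only delicate point is that "the recovery conclusion" must be a well-defined event depending only on \((f,X)\). This is why the fixed deterministic tie-breaking rule from the Remark is needed. It is also why the common value of \(\|f\|_2\) is assumed: the constraint \(\|g-f\|_{\ell^2(X)}\le\epsilon\|f\|_2\) then uses the same radius for all \(f\in\mathcal C\), so the decoder depends only on the sampled values and \(X\).

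If one wanted \(Z\ge\frac12|\mathcal C|\) with probability strictly above \(1/2\), one could use a Chernoff bound for \(|X|\) instead of Markov. This is unnecessary here, because the two Markov estimates already leave positive probability. The main step is therefore just the exchange of expectation and summation; everything else is routine.
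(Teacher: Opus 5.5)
Your proof is correct and is essentially the paper's argument: you average the success indicator over $\mathcal C$ to get $\mathbb E Z\ge\frac34|\mathcal C|$, and you apply Markov's inequality to $|X|$. The only difference is cosmetic. The paper combines the two events through $\mathbb E[\alpha(X)\mathbf 1_{\{|X|\le 4m\}}]\ge \frac34-\frac14=\frac12$, whereas you apply Markov to the failure count and then take a union bound, with the empty class $\mathcal C=\emptyset$ being trivial.
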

\begin{proof}
If $\mathcal C$ is empty, the conclusion is immediate. Assume henceforth that $\mathcal C$ is nonempty.

For a random sample $X$, let $\alpha(X)$ be the proportion of $f\in\mathcal C$ for which recovery succeeds. Averaging first in $f$ gives $\mathbb E\alpha(X)\geq3/4$. Also $\mathbb E|X|=m$, so Markov's inequality gives $\mathbb P(|X|>4m)\leq1/4$. Therefore
\[
\mathbb E\bigl[\alpha(X)\mathbf 1_{\{|X|\leq4m\}}\bigr]
\geq\frac12.
\]
At least one realization has both asserted properties.
\end{proof}

\begin{lemma}\label{lem:rounding-code}
Fix $q$, $\epsilon$, and a sample $X$. Suppose recovery succeeds on a subcollection $\mathcal C_X\subset\mathcal A_{N,q}(r)$. Then
\[
|\mathcal C_X|
\leq
2^{|X|}
\sum_{j=0}^{R}\binom{N^2}{j},
\qquad
R=\min\left\{N^2,\left\lceil C\epsilon^2N^2\right\rceil\right\},
\]
where $C$ is an absolute constant.
\end{lemma}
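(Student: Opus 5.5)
The plan is to build an encoding map $\mathcal C_X\to\{0,1\}^X\times\{\text{Hamming perturbations}\}$ that is injective, and then count its image. For $A\in\mathcal C_X$, record the restriction $A|_X\in\{0,1\}^{X}$; there are at most $2^{|X|}$ such restrictions. Recovery is declared successful for $A$, so the minimizer $A^\ast$ selected by the fixed deterministic tie-breaking rule (see the remark after Theorem~\ref{thm:recovery}) satisfies $\|A^\ast-A\|_2\leq 11.47\epsilon\|A\|_2$. The key observation is that $A^\ast$ is a function of $X$, of the data $A|_X$, and of the constraint radius $\epsilon\|A\|_2$. Since every $A\in\mathcal A_{N,q}(r)$ has exactly $2q$ ones, $\|A\|_2=\sqrt{2q}$ is common to the whole class. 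Hence for fixed $X$, $q$, $\epsilon$, the decoder output $A^\ast=\Phi(A|_X)$ depends only on the sampled bits.

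Next we round. Let $B=B(A|_X)\in\{0,1\}^{N\times N}$ be obtained by rounding each entry of $A^\ast$: set $B_{ij}=1$ if $\operatorname{Re}A^\ast_{ij}\geq1/2$, and $0$ otherwise. At any position where $B_{ij}\neq A_{ij}$ with $A_{ij}\in\{0,1\}$, we have $|A^\ast_{ij}-A_{ij}|\geq1/2$. Therefore
\[
d_H(A,B)\leq 4\|A^\ast-A\|_2^2\leq 4(11.47)^2\epsilon^2\cdot 2q\leq 4(11.47)^2\epsilon^2N^2,
\]
using $2q\leq N^2$. Thus $A\in B_H(B(A|_X),R)$ with $C=4(11.47)^2$, and trivially also $d_H\leq N^2$; the ceiling is harmless since $d_H$ is an integer.

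Finally, we count. $\mathcal C_X\subset\bigcup_{b\in\{0,1\}^X}B_H(B(b),R)$, and each Hamming ball in $\{0,1\}^{N\times N}$ of radius $R$ has exactly $\sum_{j=0}^{R}\binom{N^2}{j}$ elements. Summing over at most $2^{|X|}$ centers gives the bound. The only delicate step is the first: justifying that the decoder output is determined by $A|_X$ alone. This needs both the deterministic tie-breaking and the observation that the constraint $\|g-f\|_{\ell^2(X)}\leq\epsilon\|f\|_2$ involves $f$ only through $f|_X$ and the class-wide constant $\|f\|_2=\sqrt{2q}$. I would state this explicitly, noting it is exactly why $q$ is fixed in the lemma.
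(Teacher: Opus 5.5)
Your proposal is correct and follows the paper's proof essentially step for step. Both index decoder outputs by $y\in\{0,1\}^X$ through a fixed tie-breaking rule with the class-wide radius $\epsilon\sqrt{2q}$, round entrywise so that each mismatch costs at least $1/4$ in squared Frobenius error, and cover $\mathcal C_X$ by at most $2^{|X|}$ Hamming balls. Your explicit constant $C=4(11.47)^2$ and your real-part rounding rule for complex outputs only make the same argument more precise.
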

\begin{proof}
For every binary data vector $y\in\{0,1\}^X$, choose, by a fixed deterministic rule, one minimizer $D_X(y)$ of
\[
\min_g\|\widehat g\|_1
\quad\text{subject to}\quad
\|g-y\|_{\ell^2(X)}\leq\epsilon\sqrt{2q}.
\]
Thus there are at most $2^{|X|}$ possible decoder outputs. If recovery succeeds for $A\in\mathcal C_X$, then
\[
\|D_X(A|_X)-A\|_2
\leq11.47\epsilon\sqrt{2q}
\leq11.47\epsilon N.
\]
Round every entry of the decoder output to the nearest element of $\{0,1\}$, using a fixed rule at ties. If the rounded matrix differs from $A$ in $h$ positions, each such position contributes at least $1/4$ to the squared Frobenius error. Hence $h\leq C\epsilon^2N^2$. For each decoder output, the number of possible matrices $A$ is bounded by the indicated Hamming-ball volume.
\end{proof}

\subsection{A uniform enumeration bound}

\begin{theorem}\label{thm:enumeration-coarse}
There are absolute constants $c,C>0$ such that, whenever $r\geq\sqrt2$ and
\[
r(\log N)^2\leq cN,
\]
one has, uniformly in $q$,
\[
|\mathcal A_{N,q}(r)|
\leq
2^{CrN(\log N)^2}.
\]
\end{theorem}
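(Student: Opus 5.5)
We apply the coding scheme of Lemmas~\ref{lem:common-sample} and~\ref{lem:rounding-code} to the class $\mathcal C=\mathcal A_{N,q}(r)$. All of its members share the Frobenius norm $\sqrt{2q}$. We run this scheme repeatedly: at each round we remove the half of the class that is recovered. The parameter $\epsilon$ is chosen as small as possible while the sample size is still of order $rN(\log N)^2$.

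\textbf{Choice of parameters.} We take $\epsilon=1/N$, up to a constant. Then in Lemma~\ref{lem:rounding-code} we have $C\epsilon^2N^2<1$, so for a suitable absolute constant the rounding radius is $R=0$. In other words, every recovered matrix is determined exactly by the decoder output. By Theorem~\ref{thm:recovery}, recovery succeeds with probability at least $1-e^{-cpN^2}\geq 3/4$ as soon as
\[
m=pN^2\geq C r^2N^2(\log(rN))^2\log N .
\]
This is too large, since it is quadratic in $r$ and in $N$. So we instead take $\epsilon$ of order $\sqrt{r/N}$. The sample size then becomes
\[
m\asymp \frac{r^2}{\epsilon^2}(\log N)^3 \asymp rN(\log N)^3,
\]
and the Hamming radius becomes $R\asymp \epsilon^2N^2\asymp rN$. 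The Hamming-ball volume is then
\[
\sum_{j\le R}\binom{N^2}{j}\le (eN^2/R)^R = 2^{O(rN\log N)} .
\]
To remove the extra logarithm, we refine the choice to $\epsilon^2\asymp r\log N/N$. This gives $m\asymp rN(\log N)^2$ up to the factor $(\log(r/\epsilon))^2/\log N$. The hypothesis $r(\log N)^2\le cN$ guarantees that $\epsilon<1$, that $p\le 1$, and that $\log(r/\epsilon)\lesssim\log N$. The main point to check is that the logarithmic factors combine to exactly $(\log N)^2$ in the exponent. If they do not, we will settle for tracking them carefully; the condition $r\geq\sqrt2$ ensures $\log(r/\epsilon)\gtrsim 1$. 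We also need $cpN^2\geq\log 4$, which holds since $m\gtrsim rN$ is large.

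\textbf{Iteration.} Lemma~\ref{lem:common-sample} gives a sample $X_1$ with $|X_1|\le 4m$ that recovers at least half of $\mathcal C$. By Lemma~\ref{lem:rounding-code}, this recovered half has size at most $2^{4m}\cdot 2^{O(R\log N)}$. We remove it and apply the lemma again to the remainder, which still has common Frobenius norm. After at most $\log_2|\mathcal C|\le N^2$ rounds the class is exhausted. Alternatively, and more simply, the first round already shows that
\[
|\mathcal C|\le 2\cdot 2^{4m+O(R\log N)}.
\]
This is because the recovered half has at least $|\mathcal C|/2$ elements, so no iteration is needed. The total bound is therefore $2^{O(m+R\log N)}=2^{O(rN(\log N)^2)}$, which is uniform in $q$ since $q$ does not enter the estimates.

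The expected obstacle is the bookkeeping in the first step: we must balance $m\sim r^2\epsilon^{-2}\log^2(r/\epsilon)\log N$ against $R\log N\sim\epsilon^2N^2\log N$. With the optimal choice $\epsilon^2\sim r/N$, both terms are of order $rN\log^{O(1)}N$. We first try this choice and check the power of $\log N$; if it comes out as $(\log N)^3$, we will adjust $\epsilon$ by a logarithmic factor.
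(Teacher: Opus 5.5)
Your proposal is correct and, once you settle on $\epsilon^2\asymp r\log N/N$, it is the paper's argument. You apply Lemma~\ref{lem:common-sample} once to get $|X|\le 4m$ with $m\asymp rN(\log N)^2$, then Lemma~\ref{lem:rounding-code} with $R=O(rN\log N)$, and multiply by two for the unrecovered half; the iteration and the earlier trial values of $\epsilon$ are unnecessary.

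The logarithmic bookkeeping you left tentative does close. With this $\epsilon$ one has $r^2\epsilon^{-2}\log N=rN$ exactly and $\log(r/\epsilon)\le\log N$. So the sampling requirement is at most $O(rN(\log N)^2)$, not the $(\log N)^3$ your intermediate phrasing suggests, and the ball volume is $2^{O(R\log N)}=2^{O(rN(\log N)^2)}$.
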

\begin{proof}
Set
\[
\epsilon^2=\frac{r\log N}{N},
\qquad
m=KrN(\log N)^2,
\qquad
p=\frac{m}{N^2},
\]
where $K$ is a sufficiently large absolute constant. The hypothesis, after decreasing $c$, ensures $0<\epsilon<1$ and $0<p\leq1$. Moreover,
\[
\frac{r^2}{\epsilon^2}
\left(\log\frac r\epsilon\right)^2\log N
\leq
C\frac{rN}{\log N}(\log N)^3
=CrN(\log N)^2.
\]
Thus Theorem~\ref{thm:recovery} succeeds for every matrix in $\mathcal A_{N,q}(r)$ with probability at least $3/4$ for all sufficiently large $N$.

Lemma~\ref{lem:common-sample} supplies a set $X$ of cardinality at most $4m$ that recovers at least half the class. Lemma~\ref{lem:rounding-code} applies with
\[
R\leq CrN\log N.
\]
Since $R\leq N^2/2$, the standard binomial estimate gives
\[
\sum_{j=0}^{R}\binom{N^2}{j}
\leq
\left(\frac{eN^2}{R}\right)^R
\leq
2^{CrN(\log N)^2}.
\]
There are at most $2^{|X|}\leq2^{CrN(\log N)^2}$ decoder outputs. Multiplying by two to account for the unrecovered half of the class proves the theorem. The finitely many small values of $N$ are absorbed by increasing $C$.
\end{proof}

\subsection{A sharper entropy bound}

The next result uses a larger recovery tolerance. This decreases the number of sampled entries and is more efficient when $r$ is close to the largest range permitted by the argument.

\begin{theorem}\label{thm:enumeration}
There are absolute constants $c,C>0$ with the following property. Suppose $r\geq\sqrt2$ and
\[
r(\log N)^{3/2}\leq cN.
\]
Then, uniformly in $q$,
\[
|\mathcal A_{N,q}(r)|
\leq
2^{C rN(\log N)^{3/2}
\left(1+\log\frac{N}{r(\log N)^{3/2}}\right)}.
\]
\end{theorem}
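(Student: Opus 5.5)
The plan is to rerun the coding argument of Theorem~\ref{thm:enumeration-coarse}, but with a larger tolerance $\epsilon$. A larger $\epsilon$ lowers the sampling cost $|X|$ and raises the Hamming radius $R$. The two costs are then balanced so that the Hamming-ball volume is computed with its sharper entropy form $\left(eN^2/R\right)^R$, not the crude form. Concretely, I would set
\[
\epsilon^2=\frac{r(\log N)^{3/2}}{N},
\qquad
m=K\frac{r^2}{\epsilon^2}\Bigl(\log\frac r\epsilon\Bigr)^2\log N,
\qquad
p=\frac m{N^2},
\]
with $K$ a large absolute constant. Since $r\le N$ and $\epsilon\ge N^{-1/2}$, we have $\log(r/\epsilon)\le C\log N$. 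Hence $m\le CrN(\log N)^{3/2}$; indeed $r^2/\epsilon^2=rN(\log N)^{-3/2}$, which times $(\log N)^3$ gives $rN(\log N)^{3/2}$. The hypothesis $r(\log N)^{3/2}\le cN$ ensures $\epsilon<1$ and $p\le1$. Also $pN^2=m\to\infty$, so the failure probability $e^{-cpN^2}$ is at most $1/4$ for large $N$, and Theorem~\ref{thm:recovery} succeeds with probability at least $3/4$ for each matrix in $\mathcal A_{N,q}(r)$.

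Next, Lemma~\ref{lem:common-sample} gives a deterministic $X$ with $|X|\le4m$ that works for at least half of $\mathcal A_{N,q}(r)$. The matrices in this class share $\|f\|_2=\sqrt{2q}$, so the lemma applies. Lemma~\ref{lem:rounding-code} then gives radius
\[
R\le C\epsilon^2N^2=CrN(\log N)^{3/2},
\]
and we have $R\le N^2/2$ once $c$ is small. The binomial estimate gives
\[
\sum_{j\le R}\binom{N^2}{j}\le\Bigl(\frac{eN^2}{R}\Bigr)^R
=\exp\Bigl(R\bigl(1+\log\tfrac{N^2}{R}\bigr)\Bigr).
\]
Here $N^2/R\approx N/(r(\log N)^{3/2})$, up to the constant $C$, so the exponent is
\[
\le CrN(\log N)^{3/2}\Bigl(1+\log\frac{N}{r(\log N)^{3/2}}\Bigr).
\]
The map $R\mapsto R\log(eN^2/R)$ is increasing for $R\le N^2$, so replacing $R$ by its upper bound is legitimate. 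The constant inside the logarithm is absorbed into the additive $1$.

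Finally, I multiply three factors: $2^{|X|}\le 2^{4m}$ decoder outputs, the Hamming-ball volume, and a factor $2$ for the unrecovered half. This gives the stated bound, and small $N$ are absorbed by enlarging $C$.

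The main obstacle is the sampling-cost estimate: I must check that $(\log(r/\epsilon))^2\log N$ is really $O((\log N)^3)$ uniformly in the admissible range of $r$. I must also check that the total is dominated by $rN(\log N)^{3/2}$ rather than losing an extra logarithm. If the $\log(r/\epsilon)$ factor turns out to be wasteful, the fallback is to use the cruder bound $\log(r/\epsilon)\le\log N$ directly. That still gives $m\lesssim rN(\log N)^{3/2}$, which is exactly the leading term, so the claimed exponent survives.
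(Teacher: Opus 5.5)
Your proposal is correct and follows the paper's proof essentially step for step: the same tolerance $\epsilon^2=r(\log N)^{3/2}/N$, the common-sample and rounding-code lemmas, and the entropy form $(eN^2/R)^R$ of the Hamming-ball bound. The differences are cosmetic: you set $m$ equal to the recovery threshold and then bound it by $CrN(\log N)^{3/2}$, whereas the paper sets $m=KrN(\log N)^{3/2}$ directly, and your monotonicity check for $R\log(eN^2/R)$ makes explicit a step the paper leaves implicit.
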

\begin{proof}
Set
\[
L=(\log N)^{3/2},
\qquad
\epsilon^2=\frac{rL}{N},
\qquad
m=KrNL,
\qquad
p=\frac{m}{N^2}.
\]
After decreasing $c$, the parameters lie in the required ranges. Also
\[
\frac{r^2}{\epsilon^2}
\left(\log\frac r\epsilon\right)^2\log N
\leq
\frac{rN}{L}(\log N)^3
=rNL.
\]
For sufficiently large $K$, Theorem~\ref{thm:recovery} therefore succeeds with probability at least $3/4$ for every matrix in the class.

By Lemma~\ref{lem:common-sample}, a deterministic set $X$ with $|X|\leq4m$ recovers at least half the class. Lemma~\ref{lem:rounding-code} gives a Hamming radius
\[
R\leq CrNL.
\]
The assumption $rL\leq cN$ ensures $R\leq N^2/2$, and hence
\[
\sum_{j=0}^{R}\binom{N^2}{j}
\leq
\left(\frac{eN^2}{R}\right)^R
\leq
2^{CrNL\left(1+\log\frac{N}{rL}\right)}.
\]
The factor $2^{|X|}$ and the factor two for the unrecovered half are absorbed into the same bound.
\end{proof}

Let $\mathcal A_N(r)=\bigcup_q\mathcal A_{N,q}(r)$. Let $\mathcal G_{N,q}(r)$ be the set of labeled graphs on the fixed vertex set $[N]$ with $q$ edges and minimum Fourier ratio at most $r$.

\begin{corollary}\label{cor:all-edge-counts}
Under the hypotheses of either Theorem~\ref{thm:enumeration-coarse} or Theorem~\ref{thm:enumeration}, the same bound, after changing the absolute constant, holds for $|\mathcal A_N(r)|$.
\end{corollary}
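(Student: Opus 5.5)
The corollary follows from a union bound over the edge count $q$. By definition,
\[
\mathcal A_N(r)=\bigcup_{q=1}^{\binom N2}\mathcal A_{N,q}(r),
\]
where only $q\geq1$ appears, since Fourier ratios are defined only for graphs with at least one edge. Hence
\[
|\mathcal A_N(r)|\leq\binom N2\max_q|\mathcal A_{N,q}(r)|\leq N^2\max_q|\mathcal A_{N,q}(r)|.
\]
Theorems~\ref{thm:enumeration-coarse} and \ref{thm:enumeration} are stated uniformly in $q$, under hypotheses that involve only $r$ and $N$. So the maximum over $q$ is bounded by $2^{CrN(\log N)^2}$ in the first case and by $2^{CrNL(1+\log\frac{N}{rL})}$, with $L=(\log N)^{3/2}$, in the second.

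It remains to absorb the factor $N^2=2^{2\log_2 N}$ into the exponent. Since $r\geq\sqrt2$, we have $rN(\log N)^2\geq\sqrt2\,N(\log N)^2$, and this is at least $\log_2 N$ for every $N\geq3$. For $N=2$, note that $\log 2>0$ and the exponent is a fixed positive number, so a constant adjustment handles it. In the sharper case the exponent is at least $rNL$, because the factor $1+\log\frac{N}{rL}\geq1$: the hypothesis $rL\leq cN$ with $c\leq1$ gives $\frac{N}{rL}\geq1$, so the logarithm is nonnegative. Also $rNL\geq\sqrt2\,N(\log N)^{3/2}\gtrsim\log N$ for $N\geq3$. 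In both cases, then, $2\log_2N\leq C'E$, where $E$ denotes the relevant exponent. Replacing $C$ by $C+C'$ gives the claim.

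The only point needing care is the small-$N$ regime, where $\log N$ may be small. For instance, at $N=2$ the bound exponent is of order $1$ while $N^2=4$. I would handle this exactly as in the theorems, by absorbing finitely many small values of $N$ into the constant, or by noting that the exponent is bounded below by a positive absolute multiple of $\log N$ for all $N\geq2$. Nothing else is an obstacle; the argument is a one-line union bound.
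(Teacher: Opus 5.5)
Your proposal is correct and matches the paper's argument. The paper also takes a union bound over the at most $N^2$ possible edge counts and absorbs the polynomial factor into either exponential estimate by enlarging the constant. Your extra checks, that $1+\log\frac{N}{rL}\geq 1$ and that small $N$ only affects the constant, are valid details that the paper leaves implicit.
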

\begin{proof}
There are at most $\binom{N}{2}+1\leq N^2$ possible edge counts, and this polynomial factor is absorbed into either exponential estimate.
\end{proof}

\begin{corollary}\label{cor:labeled-enumeration}
Under the assumptions of Theorem~\ref{thm:enumeration},
\[
|\mathcal G_{N,q}(r)|
\leq
N!\cdot 2^{C rN(\log N)^{3/2}
\left(1+\log\frac{N}{r(\log N)^{3/2}}\right)}.
\]
The number of isomorphism classes satisfying the same conditions is bounded by the same exponential expression without the factor $N!$.
\end{corollary}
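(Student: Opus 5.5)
The plan is to map each graph in $\mathcal G_{N,q}(r)$ to an adjacency matrix in $\mathcal A_{N,q}(r)$ and count preimages. Fix $G\in\mathcal G_{N,q}(r)$. By definition of $\operatorname{FR}_{\min}$ there is a labeling $\sigma\in S_N$ with $\operatorname{FR}(A_\sigma)\leq r$. Since $G$ has exactly $q$ edges, $A_\sigma$ lies in $\mathcal A_{N,q}(r)$. Choose one such $\sigma$ for each $G$ by a fixed rule, and consider the map $G\mapsto(\sigma_G,A_{\sigma_G})$.

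This map is injective. Given $\sigma$ and the labeled adjacency matrix $A_\sigma$ on $\mathbb Z_N$, the graph on $[N]$ is recovered by pulling edges back through $\sigma$: vertices $u,v\in[N]$ are adjacent exactly when $(A_\sigma)_{\sigma(u),\sigma(v)}=1$. Hence
\[
|\mathcal G_{N,q}(r)|\leq |S_N|\cdot|\mathcal A_{N,q}(r)|=N!\,|\mathcal A_{N,q}(r)|.
\]
The first assertion then follows by inserting the bound of Theorem~\ref{thm:enumeration}.

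For isomorphism classes, the point is that $\operatorname{FR}_{\min}$ and the edge count are isomorphism invariants. Consequently every isomorphism class counted has at least one representative labeled on $\mathbb Z_N$ whose adjacency matrix lies in $\mathcal A_{N,q}(r)$. Sending each class to one such matrix, again by a fixed rule, gives an injection, because distinct classes cannot share an adjacency matrix. The number of classes is therefore at most $|\mathcal A_{N,q}(r)|$, and Theorem~\ref{thm:enumeration} gives the bound with no $N!$ factor.

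There is no real obstacle here. The only point needing care is that the minimizing labeling preserves the edge count $q$, so the image really lands in the fixed-$q$ class to which Theorem~\ref{thm:enumeration} applies uniformly in $q$.
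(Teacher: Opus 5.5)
Your proof is correct and uses the same counting argument as the paper: a fixed choice of minimizing labeling injects isomorphism classes into $\mathcal A_{N,q}(r)$, and a factor $N!$ accounts for the labelings. The only difference is cosmetic: you bound labeled graphs directly through the pair $(\sigma_G,A_{\sigma_G})$, whereas the paper first bounds isomorphism classes and then uses that each class has at most $N!$ labeled realizations.
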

\begin{proof}
Choosing one minimizing labeling from each isomorphism class gives an injection from the relevant isomorphism classes into $\mathcal A_{N,q}(r)$. Hence the number of such isomorphism classes is at most $|\mathcal A_{N,q}(r)|$. Each isomorphism class has at most $N!$ labeled realizations, which gives the stated bound for $|\mathcal G_{N,q}(r)|$.
\end{proof}

\begin{remark*}
The proof gives more than a cardinality estimate. For a fixed edge count, at least half of the low-complexity adjacency matrices lie in Hamming balls of radius $O(rN(\log N)^{3/2})$ around at most $2^{O(rN(\log N)^{3/2})}$ rounded decoder outputs. Using Chernoff concentration for $|X|$ together with the exponentially small recovery-failure probability, the fraction one half can be replaced by $1-o(1)$. Thus small Fourier ratio imposes a strong metric-entropy constraint even without a complete structural classification.
\end{remark*}

\subsection{A lower bound for low-complexity adjacency matrices}

We complement the preceding upper bounds with a general lower bound. The argument uses the stability of the Fourier ratio under a small number of edge edits.

\begin{lemma}\label{lem:stability-enumeration}
Let $G$ and $H$ be labeled graphs of the same order $N$, with adjacency matrices $A_G$ and $A_H$ and positive sizes $s_G$ and $s_H$. If the edge edit distance between $G$ and $H$ is $k$, then
\[
\left|\operatorname{FR}(A_G)-\operatorname{FR}(A_H)\right|
\leq
\frac{2N\sqrt{k}}{\sqrt{\min\{s_G,s_H\}}}.
\]
\end{lemma}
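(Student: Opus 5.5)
Write $A=A_G$, $B=A_H$ and $\Delta=A-B$, so that $\widehat A=\widehat B+\widehat\Delta$ by linearity of the Fourier transform. The plan is to control the change in numerator and denominator of the Fourier ratio separately in terms of $\Delta$. Since the edit distance is $k$, the matrix $\Delta$ has exactly $2k$ nonzero entries, each equal to $\pm1$. Parseval then gives $\|\widehat\Delta\|_2=\|\Delta\|_2=\sqrt{2k}$. Cauchy--Schwarz over the $N^2$ frequencies gives
\[
\|\widehat\Delta\|_1\leq N\|\widehat\Delta\|_2=N\sqrt{2k}.
\]
We also note the trivial bound $\|\widehat B\|_1\leq N\|\widehat B\|_2$, that is, $\operatorname{FR}(B)\leq N$.

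For the main step, assume without loss of generality that $s_G\leq s_H$; the other case is symmetric. Split the difference as
\[
\operatorname{FR}(A)-\operatorname{FR}(B)
=\frac{\|\widehat A\|_1-\|\widehat B\|_1}{\|\widehat A\|_2}
+\|\widehat B\|_1\left(\frac1{\|\widehat A\|_2}-\frac1{\|\widehat B\|_2}\right).
\]
By the triangle inequality, the first term is at most $\|\widehat\Delta\|_1/\|\widehat A\|_2\leq N\sqrt{2k}/\sqrt{2s_G}=N\sqrt{k}/\sqrt{s_G}$.

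For the second term, rewrite it as
\[
\operatorname{FR}(B)\,\frac{\|\widehat B\|_2-\|\widehat A\|_2}{\|\widehat A\|_2}.
\]
The reverse triangle inequality in Frobenius norm gives $\bigl|\|\widehat B\|_2-\|\widehat A\|_2\bigr|\leq\|\widehat\Delta\|_2=\sqrt{2k}$. Together with $\operatorname{FR}(B)\leq N$, the second term is bounded by $N\sqrt{2k}/\sqrt{2s_G}=N\sqrt k/\sqrt{s_G}$.

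Adding the two bounds gives
\[
\bigl|\operatorname{FR}(A)-\operatorname{FR}(B)\bigr|\leq \frac{2N\sqrt k}{\sqrt{s_G}}=\frac{2N\sqrt{k}}{\sqrt{\min\{s_G,s_H\}}},
\]
which is the claim.

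The step that needs the most care is choosing the decomposition so that the denominator is always the Frobenius norm of the \emph{smaller} graph. That is why we normalize so that $s_G$ is the minimum and divide by $\|\widehat A\|_2$ in both terms, with the factor $\operatorname{FR}(B)$ absorbing the other norm. Everything else is routine: Parseval, Cauchy--Schwarz and the triangle inequality. No earlier result of the paper is needed beyond Parseval's identity.
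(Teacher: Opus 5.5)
Your proof is correct and is essentially the paper's argument: the same bound $\|\widehat\Delta\|_1\le N\sqrt{2k}$, the same normalization by the Frobenius norm of the smaller graph, and the same use of the other graph's Fourier ratio (bounded by $N$) to absorb the change in denominators. The only cosmetic difference is how you control the denominators: you bound $\bigl|\|\widehat A\|_2-\|\widehat B\|_2\bigr|$ by the reverse triangle inequality, whereas the paper uses $|s_G-s_H|\le k$ to get $\sqrt{2s_G}-\sqrt{2s_H}\le\sqrt{2k}$.
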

\begin{proof}
Let $\Delta=A_G-A_H$. Since one edge edit changes two symmetric matrix entries, $\Delta$ has exactly $2k$ nonzero entries, each equal to $1$ or $-1$. Hence
\[
\|\Delta\|_2=\sqrt{2k}.
\]
By the reverse triangle inequality, Cauchy--Schwarz, and Parseval's identity,
\begin{align*}
\left|\|\widehat{A_G}\|_1-\|\widehat{A_H}\|_1\right|
&\leq \|\widehat\Delta\|_1\\
&\leq N\|\widehat\Delta\|_2\\
&=N\|\Delta\|_2\\
&=N\sqrt{2k}.
\end{align*}
Assume without loss of generality that $s_H\leq s_G$. Since $|s_G-s_H|\leq k$,
\[
\sqrt{2s_G}-\sqrt{2s_H}\leq\sqrt{2k}.
\]
Also, every function on $\mathbb Z_N^2$ has Fourier ratio at most $N$. Therefore
\begin{align*}
\left|\operatorname{FR}(A_G)-\operatorname{FR}(A_H)\right|\sqrt{2s_H}
&\leq
\left|\operatorname{FR}(A_G)\sqrt{2s_G}
-
\operatorname{FR}(A_H)\sqrt{2s_H}\right|\\
&\quad+
\operatorname{FR}(A_G)
\left(\sqrt{2s_G}-\sqrt{2s_H}\right)\\
&\leq 2N\sqrt{2k}.
\end{align*}
Dividing by $\sqrt{2s_H}$ proves the result.
\end{proof}

\begin{theorem}\label{thm:enumeration-lower}
Let $r_N$ be a sequence of real numbers such that
\[
r_N\to\infty
\qquad\text{and}\qquad
\frac{r_N}{N}\to0.
\]
There is an absolute constant $c>0$ such that, for all sufficiently large $N$,
\[
|\mathcal A_N(r_N)|
\geq
2^{c r_N^2\log(N/r_N)}.
\]
\end{theorem}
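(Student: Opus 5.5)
The plan is to perturb the complete graph $K_N$ under its natural labeling and to control the Fourier ratio with Lemma~\ref{lem:stability-enumeration}. The complete graph is the Cayley graph on $\mathbb Z_N$ with connection set $\mathbb Z_N\setminus\{0\}$, and its adjacency matrix is $J-I$.

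First compute its Fourier ratio. Under the convention $\widehat f=FfF$, the transform $\widehat J$ has a single nonzero entry, equal to $N$, at $(0,0)$. The transform $\widehat I=F^2$ is a permutation matrix with $N$ unit entries, one of which also sits at $(0,0)$. Hence
\[
\|\widehat{A_{K_N}}\|_1=(N-1)+(N-1)=2N-2,
\qquad
\|\widehat{A_{K_N}}\|_2=\sqrt{N(N-1)},
\]
and so $\operatorname{FR}(A_{K_N})\le 2$. (This is consistent with Theorem~\ref{thm:energy-bound}, since $\mathcal E(K_N)=2N-2$.)

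Next, take any labeled graph $H$ obtained from $K_N$ by deleting $k$ edges, with $k\le N^2/8$. Then $s_H\ge N^2/4$ for large $N$, and Lemma~\ref{lem:stability-enumeration} gives
\[
\operatorname{FR}(A_H)\le 2+\frac{2N\sqrt k}{\sqrt{N^2/4}}=2+4\sqrt k .
\]
Choose $k=\lfloor (r_N-2)^2/16\rfloor$. Since $r_N\to\infty$, we have $k\ge c_0r_N^2$ for large $N$. Since $r_N/N\to0$, we have $k\le N^2/8$, so the hypothesis above is satisfied. With this choice every such $H$ has Fourier ratio at most $r_N$, and therefore its adjacency matrix lies in $\mathcal A_N(r_N)$.

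Different edge sets give different adjacency matrices, so
\[
|\mathcal A_N(r_N)|\ge\binom{\binom N2}{k}\ge\left(\frac{N^2}{2k}\right)^{k}\cdot\frac12 .
\]
Because $k\asymp r_N^2$, we get $\log(N^2/(2k))\ge 2\log(N/r_N)-O(1)$. This quantity is at least $\log(N/r_N)$ for large $N$, because $N/r_N\to\infty$. It follows that $|\mathcal A_N(r_N)|\ge 2^{c r_N^2\log(N/r_N)}$.

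The step I expect to need the most care is the Fourier computation for $K_N$ together with checking the hypotheses of Lemma~\ref{lem:stability-enumeration}. The minimum size $s_H$ must stay of order $N^2$ so that the loss from the perturbation is $O(\sqrt k)$ and not $O(N\sqrt k/\sqrt{s})$ with $s$ small. If we had perturbed a sparse Cayley graph, $s$ would be small and the loss would be too large, and the count would be weaker.
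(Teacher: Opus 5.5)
Your proposal is correct and follows the same route as the paper. You delete $k\asymp r_N^2$ edges from $K_N$ under its natural labeling, use $\operatorname{FR}(K_N)=2\sqrt{1-1/N}$ and Lemma~\ref{lem:stability-enumeration} with $s_H\geq N^2/4$ to get $\operatorname{FR}\leq 2+4\sqrt{k}\leq r_N$, and count $\binom{\binom N2}{k}$ edge sets; the only difference is your choice $k=\lfloor (r_N-2)^2/16\rfloor$ in place of the paper's $\lfloor r_N^2/25\rfloor$.

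One small point: the factor $\tfrac12$ in your binomial estimate is not what the standard bound $\binom{M}{k}\geq (M/k)^k$ gives when $k$ is much larger than $N$, because $(1-1/N)^k$ can then be tiny. However, $(M/k)^k\geq (N^2/(4k))^k$ for $N\geq 2$ already yields the conclusion, so nothing changes.
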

\begin{proof}
Let
\[
k_N=\left\lfloor\frac{r_N^2}{25}\right\rfloor,
\]
and let $\mathcal S_N$ be the set of adjacency matrices obtained by deleting exactly $k_N$ edges from the complete graph $K_N$ under its natural labeling. A direct computation gives
\[
\operatorname{FR}(K_N)=2\sqrt{1-\frac{1}{N}}.
\]
Since $r_N=o(N)$, every graph represented by a matrix $A\in\mathcal S_N$ has at least $N^2/4$ edges for all sufficiently large $N$. Lemma~\ref{lem:stability-enumeration} then gives
\begin{align*}
\operatorname{FR}(A)
&\leq
\operatorname{FR}(K_N)
+
\frac{2N\sqrt{k_N}}{\sqrt{N^2/4}}\\
&\leq
2+4\sqrt{k_N}\\
&\leq
2+\frac{4}{5}r_N\\
&<r_N
\end{align*}
for all sufficiently large $N$. Thus $\mathcal S_N\subset\mathcal A_N(r_N)$.

Put $M=\binom{N}{2}$. Since $r_N\to\infty$ and $r_N=o(N)$, we have $1\leq k_N\leq M/2$ for all sufficiently large $N$. For all sufficiently large $N$, we also have
\[
k_N\geq\frac{r_N^2}{50}
\qquad\text{and}\qquad
\frac{M}{k_N}\geq c_0\left(\frac{N}{r_N}\right)^2
\]
for an absolute constant $c_0>0$. The standard estimate
\[
\binom{M}{k_N}\geq\left(\frac{M}{k_N}\right)^{k_N}
\]
therefore gives
\[
|\mathcal A_N(r_N)|
\geq
|\mathcal S_N|
=
\binom{M}{k_N}
\geq
2^{c r_N^2\log(N/r_N)}
\]
for a suitable absolute constant $c>0$.
\end{proof}

\section{Fourier complexity of random graphs}\label{section:random}

We write \(G(N,p)\) for the Erd\H{o}s--R\'enyi random graph on the labeled vertex set \([N]\); see \cite{Bollobas2001} for standard background. Gupta and Iosevich asked whether, for fixed \(p\in(0,1)\), there is a constant \(c_p>0\) such that
\[
\operatorname{FR}_{\min}(G(N,p))\geq c_pN
\]
with probability tending to one. We answer this question in the affirmative and, more generally, obtain the optimal linear order whenever \(Np_N/\log N\to\infty\), provided the edge probabilities remain bounded away from one.

\begin{theorem}\label{thm:random-linear}
Let \((p_N)_{N\in\mathbb N}\) be a sequence of edge probabilities such that
\[
\frac{Np_N}{\log N}\longrightarrow\infty
\qquad\text{and}\qquad
\limsup_{N\to\infty}p_N<1.
\]
Equivalently, suppose there is a constant \(\delta_0>0\) such that
\[
p_N\leq1-\delta_0
\]
for all sufficiently large \(N\). Then there is a constant \(c=c(\delta_0)>0\) such that
\[
cN\leq\operatorname{FR}_{\min}(G(N,p_N))\leq N
\]
with probability \(1-o(1)\) as \(N\to\infty\).
\end{theorem}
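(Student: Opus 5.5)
The upper bound $\operatorname{FR}_{\min}(G(N,p_N))\le N$ is deterministic. For any labeling, Cauchy--Schwarz on the $N^2$ entries gives $\|\widehat{A_\sigma}\|_1\le N\|\widehat{A_\sigma}\|_2$, so every function on $\mathbb Z_N^2$ has Fourier ratio at most $N$. This fact was already used in Lemma~\ref{lem:stability-enumeration}. The whole task is the lower bound. I would prove, for each fixed labeling $\sigma$, a concentration inequality for $\Phi_\sigma:=\|\widehat{A_\sigma}\|_1$ strong enough to survive a union bound over all $N!\le e^{N\log N}$ labelings. Separately, with probability $1-o(1)$ we have $s\le 2p_N\binom N2\le p_NN^2$, by Chernoff, since $p_NN^2\to\infty$. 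Then $\operatorname{FR}(A_\sigma)\ge \Phi_\sigma/\sqrt{2p_NN^2}$, and it suffices to show that, with probability $1-o(1)$, $\Phi_\sigma\ge c_1N^2\sqrt{p_N}$ holds simultaneously for all $\sigma$.

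\emph{Concentration.} View $\Phi_\sigma$ as a function of the $\binom N2$ independent edge indicators $x_e\in\{0,1\}$. The map $x\mapsto\widehat{A_\sigma}$ is linear. By Parseval and Cauchy--Schwarz,
\[
|\Phi_\sigma(x)-\Phi_\sigma(x')|\le N\|A_\sigma(x)-A_\sigma(x')\|_2=\sqrt2\,N\|x-x'\|_2 .
\]
So $\Phi_\sigma$ is convex and $\sqrt2N$-Lipschitz. Talagrand's convex-distance inequality then gives
\[
\mathbb P\bigl(|\Phi_\sigma-\operatorname{Med}\Phi_\sigma|\ge t\bigr)\le 4e^{-t^2/(8N^2)}.
\]
Take $t=\tfrac12\operatorname{Med}\Phi_\sigma$ and suppose $\operatorname{Med}\Phi_\sigma\ge 2c_1N^2\sqrt{p_N}$. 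The failure probability is then at most $4\exp(-c_1^2N^2p_N/2)$. Multiplying by $N!$ leaves $\exp(N\log N-c_1^2N^2p_N/2)\to0$, precisely because $Np_N/\log N\to\infty$. The law of $G(N,p_N)$ is invariant under relabeling, so the median does not depend on $\sigma$, and one median estimate suffices.

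\emph{Median lower bound (main obstacle).} Write $A=\mathbb EA+W$, where $\mathbb EA=p(J-I)$ and $W$ is the centered symmetric noise. The mean part is harmless. Its transform is $p(\widehat J-\widehat I)$, where $\widehat J$ has a single entry $N$ and $\widehat I$ is a permutation matrix, so $\|\widehat{\mathbb EA}\|_1\le 2pN=o(N^2\sqrt p)$. Hence it suffices to show $\|\widehat W\|_1\ge 3c_1N^2\sqrt p$ with probability at least $3/4$. Here I would use the H\"older interpolation
\[
\|M\|_2^2\le\|M\|_1^{2/3}\|M\|_4^{4/3},\qquad\text{that is,}\qquad \|M\|_1\ge\frac{\|M\|_2^3}{\|M\|_4^2}.
\]
For the numerator, $\|\widehat W\|_2^2=\|W\|_2^2$, which concentrates at about $N^2p(1-p)\ge\delta_0N^2p$. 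This is exactly where the hypothesis $p_N\le 1-\delta_0$ enters. For the denominator, each entry $\widehat W(m,n)=N^{-1}\sum_{x,y}W_{xy}e(\cdot)$ is a normalized sum of about $N^2/2$ independent centered variables; pairing the symmetric entries $(x,y)$ and $(y,x)$ makes them independent. A fourth-moment expansion gives $\mathbb E|\widehat W(m,n)|^4\lesssim p^2+p/N^2$, so $\mathbb E\|\widehat W\|_4^4\lesssim N^2p^2+p\lesssim N^2p^2$, the last step because $Np\to\infty$. Markov's inequality then bounds $\|\widehat W\|_4^4\le CN^2p^2$ with probability at least $7/8$. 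Combining the two bounds,
\[
\|\widehat W\|_1\gtrsim \frac{(\delta_0N^2p)^{3/2}}{Np}=\delta_0^{3/2}N^2\sqrt p .
\]
This gives the median bound with $c_1\asymp\delta_0^{3/2}$, and hence the final constant is $c=c(\delta_0)$.

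The delicate points are the fourth-moment computation with symmetric dependent entries and the complex phases, and checking that the mean-part subtraction loses nothing. I expect the rest to be routine bookkeeping.
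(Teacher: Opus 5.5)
Your argument is correct. Its skeleton matches the paper's: the deterministic bound $\operatorname{FR}\le N$ by Cauchy--Schwarz, convexity and the $\sqrt2N$-Lipschitz bound for $\Phi$, a union bound over $N!$ labelings using relabeling invariance, and Chernoff for $s$.

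The genuine difference is how you bound the typical size of $\Phi$ from below.

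\emph{The paper's route.} It bounds $\mathbb E\Phi$ coefficient by coefficient, using only frequencies off the lines $m\equiv n$ and $m+n\equiv0$. At each such frequency it shows $\mathbb E\widehat f(m,n)=0$. It computes $\mathbb E|\widehat f(m,n)|^2=v(N-2)/N$ by evaluating the character sum $\sum|c_{xy}|^2=N(N-2)$, which needs $m\not\equiv n$. It bounds the fourth moment by $4v^2$ and applies $\mathbb E|Z|\ge(\mathbb E|Z|^2)^{3/2}/(\mathbb E|Z|^4)^{1/2}$ on the probability space. Summing over the at least $N^2/2$ good frequencies gives $\mathbb E\Phi\ge c_0\sqrt{p_N}N^2$, which feeds directly into concentration about the mean.

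\emph{Your route.} You use the same $L^1$--$L^2$--$L^4$ interpolation, but over the frequency grid for a single realization. You first strip off the mean matrix $p(J-I)$, whose transform has $\ell^1$ mass at most $2pN$. This buys two simplifications. Parseval reduces $\|\widehat W\|_2$ to $\|W\|_2$, a sum of independent terms, so you need no per-frequency second-moment computation and no excluded frequencies. You also need only a fourth-moment upper bound that is uniform over all $(m,n)$.

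The price is that you get a lower bound only with constant probability. You therefore need concentration about the median (Talagrand's form) rather than the mean, plus a Chebyshev step for $\|W\|_2^2$. Bounding $\mathbb E|\widehat W(m,n)|^4$ by $p^2$ rather than $v^2$ gives a constant of order $\delta_0^{3/2}$ instead of the paper's $\sqrt{\delta_0}$, which is immaterial.

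One harmless arithmetic slip: with $t\ge c_1N^2\sqrt{p_N}$ and your tail $4e^{-t^2/(8N^2)}$, the exponent is $c_1^2N^2p_N/8$, not $c_1^2N^2p_N/2$. This does not affect the union bound.
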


\begin{corollary}\label{cor:random-fixed-p}
For every fixed \(p\in(0,1)\), there is a constant \(c_p>0\) such that
\[
\operatorname{FR}_{\min}(G(N,p))\geq c_pN
\]
with probability \(1-o(1)\).
\end{corollary}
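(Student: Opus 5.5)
The statement to prove is Corollary~\ref{cor:random-fixed-p}. It is a direct specialization of Theorem~\ref{thm:random-linear} to a constant sequence of edge probabilities, so the plan is simply to check that the constant sequence \(p_N\equiv p\) meets the two hypotheses of that theorem and then read off the conclusion.

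\emph{First hypothesis.} Since \(p>0\) is fixed,
\[
\frac{Np_N}{\log N}=\frac{pN}{\log N}\longrightarrow\infty .
\]

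\emph{Second hypothesis.} Since \(p<1\), we have \(\limsup_{N\to\infty} p_N=p<1\). Equivalently, \(p_N\le 1-\delta_0\) for all \(N\) with the explicit choice \(\delta_0=1-p>0\).

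Theorem~\ref{thm:random-linear} therefore applies. It yields a constant \(c=c(\delta_0)=c(1-p)>0\) such that
\[
cN\le \operatorname{FR}_{\min}(G(N,p))\le N
\]
with probability \(1-o(1)\). Setting \(c_p:=c(1-p)\) and discarding the upper bound gives the claimed statement. The constant depends only on \(p\), through \(\delta_0\), as the corollary requires.

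No step here is a genuine obstacle. All the substantive work sits in Theorem~\ref{thm:random-linear}: there one shows that \(\|\widehat{A_\sigma}\|_1\) is a convex \(N\)-Lipschitz function of the edge variables, shows its mean is of order \(N\sqrt{pN^2}\), and applies concentration strong enough for a union bound over the \(N!\) labelings. That result may be assumed here. The only point to watch is that the constant produced by the theorem depends only on \(\delta_0\), and hence only on \(p\), rather than on \(N\); this is explicit in the statement of Theorem~\ref{thm:random-linear}.
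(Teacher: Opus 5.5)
Your proposal is correct and is exactly the paper's proof: apply Theorem~\ref{thm:random-linear} to the constant sequence \(p_N=p\) with \(\delta_0=1-p\), and keep only the lower bound. One small correction to your side summary: the Lipschitz constant of \(\Phi\) with respect to the edge variables \(\xi\) is \(\sqrt2N\), not \(N\), because each edge occupies two matrix entries; this has no effect on the corollary.
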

\begin{proof}
Apply Theorem~\ref{thm:random-linear} to the constant sequence \(p_N=p\), with \(\delta_0=1-p\).
\end{proof}

For a graph \(G\) with \(s\) edges, define
\[
W(G)=\min_{\sigma\in S_N}\|\widehat{f_\sigma}\|_1.
\]
Then
\[
\operatorname{FR}_{\min}(G)=\frac{W(G)}{\sqrt{2s}}.
\]
We regard the adjacency matrix of a graph on \([N]\) as a function of its free edge variables. For
\[
\xi=(\xi_{ij})_{1\leq i<j\leq N}\in\mathbb R^{\binom N2},
\]
let \(A(\xi)\) be the symmetric matrix with zero diagonal and with
\[
A(\xi)_{ij}=A(\xi)_{ji}=\xi_{ij}
\qquad\text{for }i<j.
\]
Thus \(G(N,p)\) corresponds to independent Bernoulli\((p)\) variables \(\xi_{ij}\). Define
\[
\Phi(\xi)=\|FA(\xi)F\|_1.
\]

We use the following concentration inequality.

\begin{theorem}[{\cite[Theorem 3.24]{Wainwright2019}}]\label{thm:convex-concentration}
Let \(X_1,\dots,X_n\) be independent random variables taking values in \([0,1]\), set \(X=(X_1,\dots,X_n)\), and let \(g:\mathbb R^n\to\mathbb R\) be convex and \(L\)-Lipschitz with respect to the Euclidean norm. Then, for every \(t\geq0\),
\[
\mathbb P\left(|g(X)-\mathbb E[g(X)]|\geq t\right)
\leq2\exp\left(-\frac{t^2}{2L^2}\right).
\]
\end{theorem}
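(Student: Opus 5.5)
This is the cited concentration inequality \cite[Theorem 3.24]{Wainwright2019}. If I were to prove it rather than cite it, I would use the entropy method: a one-dimensional entropy inequality, tensorization, and the Herbst argument. Throughout, write \(Z=g(X)\). By a standard smoothing argument (convolve \(g\) with a small Gaussian, which preserves convexity and the Lipschitz constant, then pass to the limit), I may assume that \(g\) is differentiable with \(\|\nabla g\|_2\leq L\) everywhere.

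\emph{Step 1: one coordinate.} Fix all coordinates except \(X_i\). Let \(h\) be the resulting convex function of one variable on \([0,1]\), and let \(Y\) be any random variable with values in \([0,1]\). Let \(Y'\) be an independent copy of \(Y\). I would start from the symmetrized bound
\[
\operatorname{Ent}\bigl(e^{\lambda h(Y)}\bigr)\leq \mathbb E\Bigl[\lambda^2\bigl(h(Y)-h(Y')\bigr)_+^2e^{\lambda h(Y)}\Bigr],
\qquad \lambda>0.
\]
This follows from the variational formula for entropy together with the inequality \(e^{-x}-1+x\leq x^2/2\) for \(x\geq0\). Convexity then gives \(h(Y)-h(Y')\leq h'(Y)(Y-Y')\), and \(|Y-Y'|\leq1\) yields
\[
\operatorname{Ent}\bigl(e^{\lambda h(Y)}\bigr)\leq\lambda^2\,\mathbb E\bigl[h'(Y)^2e^{\lambda h(Y)}\bigr].
\]
The point of this step is that the derivative is evaluated at the same point as the exponential weight.

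\emph{Step 2: tensorization and Herbst.} Subadditivity of entropy for product measures turns Step 1 into
\[
\operatorname{Ent}\bigl(e^{\lambda Z}\bigr)\leq\lambda^2\,\mathbb E\bigl[\|\nabla g(X)\|_2^2e^{\lambda Z}\bigr]\leq\lambda^2L^2\,\mathbb E e^{\lambda Z}.
\]
Dividing by \(\lambda^2\mathbb E e^{\lambda Z}\) gives a differential inequality for \(\lambda^{-1}\log\mathbb E e^{\lambda(Z-\mathbb EZ)}\). Integrating it (Herbst) bounds the log-moment generating function by a quadratic in \(\lambda\). Chernoff's bound then gives the upper tail \(\mathbb P(Z-\mathbb EZ\geq t)\leq e^{-t^2/(cL^2)}\), and I would track constants carefully to reach the exponent \(t^2/(2L^2)\). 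With the cruder factor \(\lambda^2\) above one gets \(c=4\), so sharpening this requires the refined inequality \(e^{-x}-1+x\leq x^2/2\) to be used without loss.

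\emph{Step 3: the lower tail, which I expect to be the main obstacle.} The function \(-g\) is concave, so the convexity step in Step 1 cannot be reused verbatim. I would first try running the argument with \(\lambda<0\) and using the positive part in the other direction, so that the increment is controlled by the derivative at \(Y'\) rather than at \(Y\). The Lipschitz bound \(|h'|\leq L_i\) is uniform, so it can be pulled out regardless of the evaluation point; here the uniform \(L\)-Lipschitz hypothesis replaces convexity. If the constants degrade in this step, the fallback is Talagrand's convex-distance inequality. It gives concentration of \(Z\) about its median with Gaussian tails on both sides, because the sublevel sets \(\{g\leq a\}\) are convex. One then converts median to mean at the cost of absolute constants. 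Combining the two tails gives the factor \(2\) in the statement.
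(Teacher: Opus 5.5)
Your Step~3 does not go through, and it carries the whole difficulty of the theorem. The primary argument needs constants $L_i$ with $|\partial_i g|\le L_i$ everywhere and $\sum_i L_i^2\le L^2$. The Euclidean Lipschitz hypothesis gives no such constants: it controls $\|\nabla g(x)\|_2$ only one point at a time. For example, the convex, $L$-Lipschitz function $g(x)=L\max_i x_i$ has $\sup|\partial_i g|=L$ for every $i$.

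With $\lambda<0$ the relevant increment is $(h(Y')-h(Y))_+$, and convexity bounds it by $|h'(Y')|$. So the $i$th partial derivative of $g$ is evaluated at $X$ with its $i$th coordinate resampled (or at a maximizing endpoint, if you compare with $\max h$). After tensorization these are $n$ different points, and the only uniform bound on the sum of their squares is $nL^2$. Swapping $Y$ and $Y'$ to move the derivative back to $Y$ costs a factor $e^{|\lambda|L}$, which leaves a lower tail that is Gaussian only for $t=O(L)$.

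The Talagrand fallback gives Gaussian tails about a median, with exponent $t^2/(4L^2)$, and converting to the mean loses more. You would therefore end with $Ce^{-ct^2/L^2}$ for some $c<1/2$, not $2e^{-t^2/(2L^2)}$. That weaker form would still serve Lemma~\ref{lem:phi-concentration} and the union bound over $N!$ labelings, but it is not the stated theorem.

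The upper tail also misses the stated constant as written. With an independent copy, both halves of the symmetrization contribute $x^2/2$ to leading order (the symmetrized kernel is $x(1-e^{-x})$), so no factor $\frac12$ can be recovered there. This part is easy to repair. Take $u=e^{\lambda m}$ with $m=\min_{[0,1]}h$ in the variational formula, so that only $e^{-x}-1+x\le x^2/2$ enters. Convexity gives $0\le h(Y)-m\le|h'(Y)|$, and the resulting bound $\operatorname{Ent}(e^{\lambda h(Y)})\le\frac{\lambda^2}{2}\mathbb{E}[h'(Y)^2e^{\lambda h(Y)}]$ tensorizes to the upper tail $e^{-t^2/(2L^2)}$.

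The paper gives no proof of its own; it quotes Wainwright, and I believe his proof rests on the idea you are missing: Marton's asymmetric transportation inequality. For a product measure $P$ on $[0,1]^n$ and any $Q\ll P$, there are couplings $(X,Y)$ with $X\sim P$, $Y\sim Q$ and
\[
\mathbb{E}\sum_{i=1}^n\mathbb{P}\bigl(X_i\neq Y_i\mid Y\bigr)^2\le 2D(Q\|P),
\]
and likewise with the conditioning on $X$. Convexity together with $[0,1]$-valued coordinates gives $g(y)-g(x)\le\sum_i|\partial_i g(y)|\mathbf{1}[x_i\neq y_i]$. Condition on the endpoint that carries the gradient and apply Cauchy--Schwarz against $\|\nabla g\|_2\le L$ at that single point. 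This gives $|\mathbb{E}_Q g-\mathbb{E}_P g|\le L\sqrt{2D(Q\|P)}$, with each sign using one of the two conditionings.

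The variational formula $\log\mathbb{E}_P e^{F}=\sup_Q\{\mathbb{E}_Q F-D(Q\|P)\}$ then yields $\log\mathbb{E}e^{\lambda(g(X)-\mathbb{E}g(X))}\le\lambda^2L^2/2$ for every real $\lambda$. Hence each tail is at most $e^{-t^2/(2L^2)}$, which gives the factor $2$. The asymmetry of the cost, meaning the freedom to condition on whichever endpoint carries the gradient, is exactly what removes the evaluation-point obstruction you met in Step~3.
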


\begin{lemma}\label{lem:phi-concentration}
Let \(\xi_{ij}\), \(1\leq i<j\leq N\), be independent Bernoulli\((p)\) random variables. Then, for every \(t\geq0\),
\begin{equation}\label{eq:phi-concentration}
\mathbb P\left(|\Phi-\mathbb E[\Phi]|\geq t\right)
\leq2\exp\left(-\frac{t^2}{4N^2}\right).
\end{equation}
\end{lemma}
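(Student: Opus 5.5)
The plan is to apply Theorem~\ref{thm:convex-concentration} to \(g=\Phi\) with \(n=\binom N2\) and \(X=\xi\). The Bernoulli variables take values in \(\{0,1\}\subset[0,1]\) and are independent. It therefore suffices to show that \(\Phi\) is convex on \(\mathbb R^{\binom N2}\) and \(L\)-Lipschitz with \(L=\sqrt2\,N\). Then \(2L^2=4N^2\), which gives exactly \eqref{eq:phi-concentration}.

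Convexity follows from linearity. The map \(\xi\mapsto A(\xi)\) is linear, and so is \(M\mapsto FMF\). Hence \(\xi\mapsto FA(\xi)F\) is linear, and \(\Phi\) is the entrywise \(\ell^1\)-norm composed with a linear map. Since any norm is convex, \(\Phi\) is convex and in particular measurable, so the expectation makes sense (it is finite because \(\Phi\) is bounded on \(\{0,1\}^{\binom N2}\)).

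For the Lipschitz bound, take \(\xi,\eta\in\mathbb R^{\binom N2}\). By the reverse triangle inequality and linearity,
\[
|\Phi(\xi)-\Phi(\eta)|\leq\|FA(\xi-\eta)F\|_1.
\]
Cauchy--Schwarz over the \(N^2\) entries gives \(\|M\|_1\leq N\|M\|_2\). Unitary invariance of the Frobenius norm (Parseval) then gives
\[
\|FA(\xi-\eta)F\|_2=\|A(\xi-\eta)\|_2.
\]
Each free variable appears in exactly two symmetric positions and the diagonal is zero, so
\[
\|A(\xi-\eta)\|_2=\sqrt2\,\|\xi-\eta\|_2.
\]
Combining these, \(|\Phi(\xi)-\Phi(\eta)|\leq\sqrt2\,N\|\xi-\eta\|_2\). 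This is the same chain of estimates already used in Lemma~\ref{lem:stability-enumeration}.

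There is no real obstacle here. The only point requiring care is the constant: one must use the Frobenius factor \(\sqrt2\) coming from the symmetric doubling, and not a cruder bound, so that \(L^2=2N^2\) matches the \(4N^2\) in the exponent.
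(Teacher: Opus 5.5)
Your proposal is correct and matches the paper's proof essentially step for step. The paper also gets convexity by composing the entrywise $\ell^1$-norm with the linear map $\xi\mapsto FA(\xi)F$. It obtains the Lipschitz constant $L=\sqrt2\,N$ from the triangle inequality, Cauchy--Schwarz, Parseval, and the symmetric doubling $\|A(\xi)-A(\eta)\|_2^2=2\|\xi-\eta\|_2^2$, and then applies Theorem~\ref{thm:convex-concentration} with $2L^2=4N^2$.
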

\begin{proof}
The map \(\xi\mapsto A(\xi)\) is linear, as is the map \(M\mapsto FMF\). Since the entrywise \(\ell^1\)-norm is convex, \(\Phi\) is convex.

For \(\xi,\eta\in\mathbb R^{\binom N2}\), the triangle inequality gives
\[
|\Phi(\xi)-\Phi(\eta)|
\leq\|F(A(\xi)-A(\eta))F\|_1.
\]
Cauchy--Schwarz and Parseval's identity yield
\[
\|F(A(\xi)-A(\eta))F\|_1
\leq N\|A(\xi)-A(\eta)\|_2.
\]
Since \(A(\xi)-A(\eta)\) is symmetric with zero diagonal,
\[
\|A(\xi)-A(\eta)\|_2^2
=2\sum_{i<j}(\xi_{ij}-\eta_{ij})^2
=2\|\xi-\eta\|_2^2.
\]
Thus \(\Phi\) is \(\sqrt2N\)-Lipschitz. Theorem~\ref{thm:convex-concentration} with \(L=\sqrt2N\) gives \eqref{eq:phi-concentration}.
\end{proof}

To apply Lemma~\ref{lem:phi-concentration}, we need a lower bound for \(\mathbb E[\Phi]\). We begin with an elementary moment inequality.

\begin{lemma}\label{lem:moment-ratio}
Let \(Z\) be a complex random variable such that
\[
0<\mathbb E[|Z|^4]<\infty.
\]
Then
\[
\mathbb E[|Z|]
\geq
\frac{\left(\mathbb E[|Z|^2]\right)^{3/2}}
{\left(\mathbb E[|Z|^4]\right)^{1/2}}.
\]
\end{lemma}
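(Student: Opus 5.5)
The inequality is a direct consequence of Hölder's inequality, applied after splitting $|Z|^2$ into a product of powers of $|Z|$. I would write
\[
|Z|^2=|Z|^{2/3}\cdot|Z|^{4/3}
\]
and choose exponents so that one factor becomes $|Z|$ and the other becomes $|Z|^4$. Hölder with conjugate exponents $P=3/2$ and $Q=3$ gives
\[
\mathbb E[|Z|^2]
\le\bigl(\mathbb E[|Z|^{(2/3)\cdot(3/2)}]\bigr)^{2/3}\bigl(\mathbb E[|Z|^{(4/3)\cdot3}]\bigr)^{1/3}
=\bigl(\mathbb E[|Z|]\bigr)^{2/3}\bigl(\mathbb E[|Z|^4]\bigr)^{1/3}.
\]
Raising both sides to the power $3/2$ then yields
\[
\bigl(\mathbb E[|Z|^2]\bigr)^{3/2}\le\mathbb E[|Z|]\,\bigl(\mathbb E[|Z|^4]\bigr)^{1/2}.
\]

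The remaining step is to divide by $(\mathbb E[|Z|^4])^{1/2}$, which is legitimate because the hypothesis $0<\mathbb E[|Z|^4]<\infty$ makes this quantity positive and finite. The same hypothesis handles integrability. Finiteness of the fourth moment implies, through Lyapunov's inequality (or simply $|Z|^k\le 1+|Z|^4$ for $k\le4$), that $\mathbb E[|Z|]$ and $\mathbb E[|Z|^2]$ are also finite. Hence every quantity appearing above is a well-defined nonnegative real number.

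An alternative route is Cauchy--Schwarz applied twice, via $\mathbb E|Z|^2=\mathbb E[|Z|^{1/2}|Z|^{3/2}]$ followed by bounding $\mathbb E|Z|^3$. The single Hölder step is cleaner, so I would use that.

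I expect no real obstacle here. The only point requiring care is checking the exponent bookkeeping: $1/P+1/Q=2/3+1/3=1$, and the exponents $(2/3)P=1$ and $(4/3)Q=4$ come out as required. The positivity hypothesis is used only for the final division.
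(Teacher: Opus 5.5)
Your proof is correct: the Hölder exponents check out ($\tfrac23+\tfrac13=1$, $(2/3)\cdot\tfrac32=1$, $(4/3)\cdot3=4$), raising to the power $3/2$ preserves the inequality between nonnegative quantities, and the final division is justified by the hypothesis.

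The paper takes the two-step Cauchy--Schwarz route that you mention as an alternative. It uses
\[
\mathbb E[|Z|^2]\le(\mathbb E[|Z|])^{1/2}(\mathbb E[|Z|^3])^{1/2}
\qquad\text{and}\qquad
\mathbb E[|Z|^3]\le(\mathbb E[|Z|^2])^{1/2}(\mathbb E[|Z|^4])^{1/2},
\]
combines them, and then cancels a factor $(\mathbb E[|Z|^2])^{1/2}$. That cancellation tacitly needs $\mathbb E[|Z|^2]>0$, which holds because $\mathbb E[|Z|^4]>0$ forces $Z\neq0$ with positive probability.

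Both arguments express the same log-convexity of $t\mapsto\log\mathbb E[|Z|^t]$, here the interpolation $2=\tfrac23\cdot1+\tfrac13\cdot4$. Your single Hölder step does this in one move. It avoids the intermediate third moment and the extra cancellation, at the small cost of invoking Hölder instead of only Cauchy--Schwarz. Your explicit integrability check ($|Z|^k\le1+|Z|^4$ for $0\le k\le4$) also makes a point precise that the paper leaves implicit.
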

\begin{proof}
Cauchy--Schwarz gives
\[
\mathbb E[|Z|^2]
=\mathbb E[|Z|^{1/2}|Z|^{3/2}]
\leq
\left(\mathbb E[|Z|]\right)^{1/2}
\left(\mathbb E[|Z|^3]\right)^{1/2}
\]
and
\[
\mathbb E[|Z|^3]
=\mathbb E[|Z||Z|^2]
\leq
\left(\mathbb E[|Z|^2]\right)^{1/2}
\left(\mathbb E[|Z|^4]\right)^{1/2}.
\]
Combining these inequalities gives the result.
\end{proof}

We next estimate the moments of an individual Fourier coefficient of the random adjacency matrix.

\begin{lemma}\label{lem:moments}
Let \(A\) be the adjacency matrix of \(G(N,p)\), let
\[
\widehat f=FAF,
\]
and write \(v=p(1-p)\). Suppose \(N\geq4\) and \(vN^2\geq8\). If \((m,n)\in\mathbb Z_N^2\) satisfies
\[
m\not\equiv n\pmod N
\qquad\text{and}\qquad
m+n\not\equiv0\pmod N,
\]
then
\[
\mathbb E[\widehat f(m,n)]=0,
\qquad
\mathbb E[|\widehat f(m,n)|^2]\geq\frac v2,
\qquad
\mathbb E[|\widehat f(m,n)|^4]\leq4v^2.
\]
\end{lemma}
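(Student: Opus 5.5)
The plan is to center the edge variables and then compute directly. Write \(e(x)=e^{-2\pi i x/N}\) and \(\xi_{ij}=p+\eta_{ij}\), where the \(\eta_{ij}\) are independent, centered, satisfy \(|\eta_{ij}|\leq1\), and have variance \(v\). Since \(A\) is symmetric with zero diagonal,
\[
\widehat f(m,n)=\sum_{i<j}\xi_{ij}a_{ij},
\qquad
a_{ij}=\frac1N\bigl(e(mi+nj)+e(mj+ni)\bigr).
\]

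\emph{Mean.} The deterministic part is
\[
\frac pN\sum_{i\neq j}e(mi+nj)=\frac pN\Bigl(\sum_{i,j}e(mi+nj)-\sum_i e((m+n)i)\Bigr).
\]
The first sum vanishes unless \(m=n=0\), and this case is excluded by \(m\not\equiv n\). The second sum vanishes because \(m+n\not\equiv0\). Hence \(\widehat f(m,n)=Z:=\sum_{i<j}\eta_{ij}a_{ij}\), and \(\mathbb E[Z]=0\).

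\emph{Second moment.} By independence and centering, \(\mathbb E|Z|^2=v\sum_{i<j}|a_{ij}|^2\). Expanding,
\[
|a_{ij}|^2=\frac{1}{N^2}\Bigl(2+2\cos\frac{2\pi(m-n)(i-j)}{N}\Bigr).
\]
Summing over \(i<j\) gives \(N(N-1)/N^2\) from the constant term. The cosine term contributes
\[
\frac{1}{N^2}\operatorname{Re}\Bigl(\Bigl|\sum_i e((m-n)i)\Bigr|^2-N\Bigr)=-\frac1N,
\]
because \(m\not\equiv n\). Therefore
\[
\mathbb E|Z|^2=v\Bigl(1-\frac2N\Bigr)\geq\frac v2
\qquad\text{for }N\geq4.
\]
The same computation also gives the upper bound \(\mathbb E|Z|^2\leq v\), which is needed in the next step.

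\emph{Fourth moment.} This is the step needing the most care. Write \(|Z|^4=(Z\overline Z)^2\) as a quadruple sum over index pairs \(k_1,k_2,k_3,k_4\) of
\[
\mathbb E[\eta_{k_1}\eta_{k_2}\eta_{k_3}\eta_{k_4}]\,a_{k_1}a_{k_2}\overline{a_{k_3}}\,\overline{a_{k_4}}.
\]
Centering kills every term in which some index appears exactly once. The surviving terms are of two kinds:
\begin{itemize}
\item terms where all four indices coincide, contributing at most \(\sum_k\mathbb E[\eta_k^4]|a_k|^4\);
\item terms where the indices form two distinct pairs, whose total is bounded by \(3\bigl(v\sum_k|a_k|^2\bigr)^2\leq3v^2\).
\end{itemize}
For the second kind, there are three pairings, and each is bounded in absolute value by \((\sum_k v|a_k|^2)^2\); for the pairing matching \(a\) with \(a\), use \(|\sum_k va_k^2|\leq\sum_k v|a_k|^2\).

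For the diagonal terms, use \(\mathbb E[\eta^4]\leq\mathbb E[\eta^2]=v\), which holds since \(|\eta|\leq1\). Also \(|a_k|\leq2/N\), and there are fewer than \(N^2/2\) pairs \(i<j\). This gives
\[
\sum_k\mathbb E[\eta_k^4]|a_k|^4\leq\frac{N^2}{2}\cdot\frac{16v}{N^4}=\frac{8v}{N^2}\leq v^2,
\]
where the last inequality uses the hypothesis \(vN^2\geq8\). Adding the two contributions yields \(\mathbb E|Z|^4\leq4v^2\).
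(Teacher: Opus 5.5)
Your proof is correct and follows the paper's argument essentially step for step: you center the edge variables, use the cosine identity to get $\mathbb E|Z|^2=v(1-2/N)$, and split the fourth-moment expansion into three pairings plus diagonal terms, closing with $\mathbb E[\eta^4]\leq v$ and $vN^2\geq8$. One cosmetic point: the pairing terms run over $k\neq k'$, so the cleanest justification of the bound $(v\sum_k|a_k|^2)^2$ is the triangle inequality $\sum_{k\neq k'}v^2|a_k|^2|a_{k'}|^2\leq(v\sum_k|a_k|^2)^2$ rather than $|\sum_k va_k^2|\leq\sum_k v|a_k|^2$; the conclusion is unaffected.
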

\begin{proof}
Since \(\mathbb E[A_{xy}]=p\) for \(x\neq y\) and \(A_{xx}=0\),
\[
\mathbb E[\widehat f(m,n)]
=\frac pN\left(
\sum_{x,y\in\mathbb Z_N}e^{-2\pi i(mx+ny)/N}
-
\sum_{x\in\mathbb Z_N}e^{-2\pi i(m+n)x/N}
\right).
\]
Both sums vanish under the hypothesis \(m+n\not\equiv0\pmod N\), so
\[
\mathbb E[\widehat f(m,n)]=0.
\]

For \(x<y\), set
\[
X_{xy}=A_{xy}-p
\]
and
\[
c_{xy}=e^{-2\pi i(mx+ny)/N}+e^{-2\pi i(my+nx)/N}.
\]
The variables \(X_{xy}\) are independent and centered, with
\[
\mathbb E[X_{xy}^2]=v
\qquad\text{and}\qquad
|X_{xy}|\leq1,
\]
and \(|c_{xy}|\leq2\). Grouping the two ordered entries associated with each edge gives
\begin{equation}\label{eq:centered-coefficient}
\widehat f(m,n)=\frac1N\sum_{x<y}X_{xy}c_{xy}.
\end{equation}
Independence and centering therefore imply
\[
\mathbb E[|\widehat f(m,n)|^2]
=\frac{v}{N^2}\sum_{x<y}|c_{xy}|^2.
\]
Now
\[
|c_{xy}|^2
=2+2\cos\left(\frac{2\pi}{N}(m-n)(x-y)\right).
\]
Since \(m\not\equiv n\pmod N\), character orthogonality gives
\[
\sum_{x,y\in\mathbb Z_N}
\cos\left(\frac{2\pi}{N}(m-n)(x-y)\right)=0.
\]
After removing the \(N\) diagonal terms and using symmetry in \(x\) and \(y\), we obtain
\[
\sum_{x<y}|c_{xy}|^2=N(N-2).
\]
Consequently,
\[
\mathbb E[|\widehat f(m,n)|^2]
=\frac{v(N-2)}{N}
\geq\frac v2.
\]

For the fourth moment, let \(\mathcal P\) be the set of unordered pairs \(\{x,y\}\) with \(x<y\). For \(e=\{x,y\}\in\mathcal P\), write \(X_e=X_{xy}\) and \(c_e=c_{xy}\). Expanding \eqref{eq:centered-coefficient}, every term vanishes unless each index among \(e_1,e_2,e_3,e_4\) occurs at least twice. The nonzero terms therefore have either two distinct indices, each occurring twice, or one index occurring four times. Taking absolute values and using independence gives
\begin{align*}
\mathbb E[|\widehat f(m,n)|^4]
&\leq
\frac{3v^2}{N^4}\sum_{e\neq e'}|c_e|^2|c_{e'}|^2
+
\frac1{N^4}\sum_e\mathbb E[X_e^4]|c_e|^4\\
&\leq
\frac{3v^2}{N^4}\left(\sum_e|c_e|^2\right)^2
+
\frac{16v}{N^4}\binom N2.
\end{align*}
Here the factor \(3\) accounts for the three pairings of four positions, and we used \(\mathbb E[X_e^4]\leq\mathbb E[X_e^2]=v\). Since
\[
\sum_e|c_e|^2=N(N-2),
\]
we conclude that
\[
\mathbb E[|\widehat f(m,n)|^4]
\leq3v^2+\frac{8v}{N^2}
\leq4v^2,
\]
where the last inequality follows from \(vN^2\geq8\).
\end{proof}

\begin{proposition}\label{prop:expected-l1}
Let \((p_N)_{N\in\mathbb N}\) satisfy the hypotheses of Theorem~\ref{thm:random-linear}. Then there is a constant \(c_0=c_0(\delta_0)>0\) such that
\[
\mathbb E[\Phi]\geq c_0\sqrt{p_N}N^2
\]
for all sufficiently large \(N\).
\end{proposition}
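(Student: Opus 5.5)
The plan is to lower-bound $\mathbb E[\Phi]$ coefficient by coefficient, using linearity of expectation,
\[
\mathbb E[\Phi]=\sum_{(m,n)\in\mathbb Z_N^2}\mathbb E\bigl[|\widehat f(m,n)|\bigr],
\]
where $\widehat f=FAF$ with $A$ the adjacency matrix of $G(N,p_N)$. Since every summand is nonnegative, we may discard the exceptional frequencies with $m\equiv n$ or $m+n\equiv0\pmod N$. There are at most $2N$ of these, so at least $N^2-2N\geq N^2/2$ pairs remain once $N\geq4$.

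For each remaining pair we combine Lemma~\ref{lem:moments} with Lemma~\ref{lem:moment-ratio}. Write $v=p_N(1-p_N)$. We first check the hypotheses of Lemma~\ref{lem:moments} for large $N$. Since $Np_N/\log N\to\infty$, we have $p_N\geq \log N/N$ eventually. Since $p_N\leq 1-\delta_0$, we have $1-p_N\geq\delta_0$. Hence
\[
v\geq \delta_0 p_N
\qquad\text{and}\qquad
vN^2\geq\delta_0 N\log N\geq 8
\]
for all sufficiently large $N$. The second moment is then positive, so the fourth moment is positive as well and Lemma~\ref{lem:moment-ratio} applies. It gives
\[
\mathbb E\bigl[|\widehat f(m,n)|\bigr]\geq\frac{(v/2)^{3/2}}{(4v^2)^{1/2}}=\frac{\sqrt v}{2^{5/2}}.
\]

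Summing over the at least $N^2/2$ admissible frequencies yields
\[
\mathbb E[\Phi]\geq 2^{-7/2}\sqrt{v}\,N^2\geq 2^{-7/2}\sqrt{\delta_0}\,\sqrt{p_N}\,N^2,
\]
so the proposition holds with $c_0=2^{-7/2}\sqrt{\delta_0}$.

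Essentially all the work was done in Lemma~\ref{lem:moments}, so no step here is a real obstacle. The only points needing care are that the condition $vN^2\geq8$ follows from the growth assumption on $p_N$, and that the constant depends only on $\delta_0$. This is exactly where the upper bound $p_N\leq1-\delta_0$ enters, through $v\geq\delta_0p_N$.
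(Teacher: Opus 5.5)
Your proposal is correct and follows the paper's proof essentially step for step. You discard the at most $2N$ exceptional frequencies and combine Lemma~\ref{lem:moment-ratio} with Lemma~\ref{lem:moments} to get $\mathbb E|\widehat f(m,n)|\geq\sqrt v/2^{5/2}$; summing over at least $N^2/2$ frequencies gives the same constant $c_0=2^{-7/2}\sqrt{\delta_0}=\sqrt{\delta_0}/(8\sqrt2)$.
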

\begin{proof}
The hypotheses imply
\[
p_N(1-p_N)N^2
\geq\delta_0p_NN^2
\longrightarrow\infty.
\]
Thus Lemma~\ref{lem:moments} applies with \(p=p_N\) for all sufficiently large \(N\).

For every \((m,n)\) satisfying
\[
m\not\equiv n\pmod N
\qquad\text{and}\qquad
m+n\not\equiv0\pmod N,
\]
Lemmas~\ref{lem:moment-ratio} and \ref{lem:moments} give
\begin{align*}
\mathbb E[|\widehat f(m,n)|]
&\geq
\frac{\left(p_N(1-p_N)/2\right)^{3/2}}
{\left(4p_N^2(1-p_N)^2\right)^{1/2}}\\
&=
\frac{\sqrt{p_N(1-p_N)}}{4\sqrt2}.
\end{align*}
There are at least \(N^2-2N\geq N^2/2\) such frequencies for \(N\geq4\). Hence
\[
\mathbb E[\Phi]
\geq
\frac{\sqrt{p_N(1-p_N)}N^2}{8\sqrt2}
\geq
\frac{\sqrt{\delta_0}}{8\sqrt2}\sqrt{p_N}N^2.
\]
The result follows with
\[
c_0=\frac{\sqrt{\delta_0}}{8\sqrt2}.
\]
\end{proof}

\begin{proof}[Proof of Theorem~\ref{thm:random-linear}]
Since \(\widehat{f_\sigma}\) has \(N^2\) entries, Cauchy--Schwarz gives
\[
\|\widehat{f_\sigma}\|_1\leq N\|\widehat{f_\sigma}\|_2.
\]
Thus
\[
\operatorname{FR}_{\min}(G)\leq N
\]
for every graph with at least one edge.

Let \(c_0=c_0(\delta_0)\) be the constant from Proposition~\ref{prop:expected-l1}. Fix \(\sigma\in S_N\), and let \(\xi^\sigma\) denote the edge variables of the relabeled graph. The permutation \(\sigma\) induces a permutation of the \(\binom N2\) unordered pairs, so \(\xi^\sigma\) has the same distribution as \(\xi\). Therefore Lemma~\ref{lem:phi-concentration} and Proposition~\ref{prop:expected-l1} give
\[
\mathbb P\left(
\Phi(\xi^\sigma)\leq\frac12c_0\sqrt{p_N}N^2
\right)
\leq
2\exp\left(-\frac{c_0^2p_NN^2}{16}\right)
\]
for all sufficiently large \(N\), uniformly in \(\sigma\).

Since
\[
W(G)=\min_{\sigma\in S_N}\Phi(\xi^\sigma),
\]
the union bound gives
\begin{align*}
\mathbb P\left(
W(G)\leq\frac12c_0\sqrt{p_N}N^2
\right)
&\leq
2N!\exp\left(-\frac{c_0^2p_NN^2}{16}\right)\\
&\leq
2\exp\left(
N\log N-\frac{c_0^2p_NN^2}{16}
\right).
\end{align*}
The last expression tends to zero because
\[
\frac{Np_N}{\log N}\longrightarrow\infty.
\]
Thus, with probability \(1-o(1)\),
\begin{equation}\label{eq:random-numerator-lower}
W(G)>\frac12c_0\sqrt{p_N}N^2.
\end{equation}

Let \(s\) be the number of edges. Then \(s\) is binomial with mean
\[
\mu=\binom N2p_N.
\]
The multiplicative Chernoff bound gives
\[
\mathbb P\left(s\geq\frac32\mu\right)
\leq\exp\left(-\frac{\mu}{12}\right).
\]
Moreover,
\[
\frac34p_NN^2
=\frac{3N}{2(N-1)}\mu
\geq\frac32\mu.
\]
Therefore
\[
\mathbb P\left(2s\geq\frac32p_NN^2\right)
\leq
\exp\left(-\frac{\mu}{12}\right)
\leq
\exp\left(-\frac{p_NN^2}{48}\right),
\]
where the last inequality uses \(\mu\geq p_NN^2/4\) for \(N\geq2\). The right-hand side is \(o(1)\).

Combining this edge-count estimate with \eqref{eq:random-numerator-lower}, we obtain, with probability \(1-o(1)\),
\[
\operatorname{FR}_{\min}(G(N,p_N))
=\frac{W(G)}{\sqrt{2s}}
>
\frac{\frac12c_0\sqrt{p_N}N^2}
{\sqrt{\frac32p_NN^2}}
=
\frac{c_0}{\sqrt6}N.
\]
This proves the theorem with \(c=c_0/\sqrt6\).
\end{proof}

\begin{remark*}
The entropy estimates of Section~\ref{section:enumeration} provide a second, weaker route to random-graph lower bounds. For fixed \(p\in(0,1)\), they imply
\[
\operatorname{FR}_{\min}(G(N,p))
\geq
\frac{N}{(\log N)^{3/2}\log\log N}
\]
with probability \(1-o(1)\). If \(p_N\to0\) and
\[
\frac{Np_N}{(\log N)^{3/2}\log\log N}\longrightarrow\infty,
\]
the same argument gives
\[
\operatorname{FR}_{\min}(G(N,p_N))
\geq
\frac{Np_N}{(\log N)^{3/2}\log\log N}
\]
with probability \(1-o(1)\). Theorem~\ref{thm:random-linear} is stronger both in scale and in its range of sparsity. Thus the polylogarithmic losses belong only to the recovery-based enumeration argument, not to the actual order of the minimum Fourier ratio of a random graph.
\end{remark*}

\section{Relations with classical graph parameters}\label{section:parameters}

We conclude with several consequences of the energy bound.

\begin{proposition}\label{prop:independence}
Let \(G\) be a graph of order \(N\) and positive size \(s\), with minimum degree \(\delta\), spectral radius \(\lambda_1\), and independence number \(\alpha(G)\). Then
\[
\operatorname{FR}_{\min}(G)
\geq
\frac1{\sqrt{2s}}
\left(
\lambda_1+
\frac{\alpha(G)\delta^2}
{\lambda_1(N-\alpha(G))}
\right).
\]
If \(G\) is \(d\)-regular, then
\[
\operatorname{FR}_{\min}(G)
\geq \frac{\sqrt{dN}}{N-\alpha(G)}.
\]
\end{proposition}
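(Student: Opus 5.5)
The plan is to reduce everything to a lower bound on the energy. By Theorem~\ref{thm:energy-bound}, $\operatorname{FR}_{\min}(G)\geq\mathcal E(G)/\sqrt{2s}$, so it suffices to prove
\[
\mathcal E(G)\geq\lambda_1+\frac{\alpha\delta^2}{\lambda_1(N-\alpha)},\qquad \alpha=\alpha(G).
\]
In the $d$-regular case we have $\lambda_1=\delta=d$ and $2s=dN$. The bound then reads $\mathcal E(G)\geq d+\alpha d/(N-\alpha)=dN/(N-\alpha)$, and dividing by $\sqrt{dN}$ gives the second assertion $\sqrt{dN}/(N-\alpha)$. So the regular statement is a direct specialization, and all the work is in the energy inequality.

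For the energy inequality I will write $A=A_+-A_-$ with $A_\pm$ positive semidefinite and of orthogonal ranges. Then $\mathcal E(G)=\operatorname{tr}A_++\operatorname{tr}A_-$, and $\operatorname{tr}A_+=\operatorname{tr}A_-$ because $\operatorname{tr}A=0$. I split $A_+=\lambda_1 uu^\ast+A_+'$, where $u$ is a unit Perron eigenvector, so that
\[
\mathcal E(G)=\lambda_1+\operatorname{tr}A_+'+\operatorname{tr}A_-.
\]
Fix an independent set $I$ of size $\alpha$, let $x=\mathbf 1_I$, and let $x'=x-\langle x,u\rangle u$. The traces are bounded below by Rayleigh quotients:
\[
\operatorname{tr}A_+'+\operatorname{tr}A_-\geq\frac{x'^{T}(A_+'+A_-)x'}{\|x'\|^2}.
\]
The degree information enters through $Ax$. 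Since $I$ is independent, $Ax$ is supported on $V\setminus I$. Its coordinates sum to $\sum_{v\in I}\deg v\geq\alpha\delta$, so Cauchy--Schwarz gives $\|Ax\|^2\geq\alpha^2\delta^2/(N-\alpha)$. On the other side, orthogonality of the spectral pieces and $|\lambda_j|\leq\lambda_1$ give
\[
\|A_+'x'\|^2+\|A_-x'\|^2\leq\lambda_1\, x'^{T}(A_+'+A_-)x'.
\]
Combining the two estimates yields the factor $\delta^2/\lambda_1$ and the denominator $N-\alpha$.

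The step I expect to be the main obstacle is reconciling $Ax$ with $Ax'$, which means controlling the Perron component $\langle x,u\rangle$. In the regular case this is clean. There $u=\mathbf 1/\sqrt N$, and $x'=\mathbf 1_I-(\alpha/N)\mathbf 1$ satisfies $Ax'=Ax-(\alpha d/N)\mathbf 1$. A direct computation of $\|Ax'\|^2$ and $\|x'\|^2=\alpha(N-\alpha)/N$, using that $Ax$ vanishes on $I$, gives exactly $\alpha d/(N-\alpha)$. So I would first carry out that computation. For general graphs I would then either bound $\|Ax'\|^2\geq\|Ax\|^2-\lambda_1^2\langle x,u\rangle^2$ and compensate with the normalization $\|x'\|^2=\alpha-\langle x,u\rangle^2$, or avoid the projection altogether. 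The fallback is the cruder chain $\|Ax\|^2\leq 2\lambda_1x^TA_-x$, using $x^TA_+x=x^TA_-x$. That chain gives $\mathcal E(G)=2\operatorname{tr}A_-\geq\alpha\delta^2/(\lambda_1(N-\alpha))$, and I would average it against the trivial bound $\mathcal E(G)\geq 2\lambda_1$, accepting possible constant losses if the sharp form resists.
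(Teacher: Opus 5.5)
\textbf{Gap: the sharp energy inequality is never proved.} Your reduction to $\mathcal E(G)\ge\lambda_1+\alpha\delta^2/(\lambda_1(N-\alpha))$ via Theorem~\ref{thm:energy-bound} is correct and matches the paper, and so is the specialization to the $d$-regular case. But you never prove that energy inequality in the stated sharp form.

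The chain $\operatorname{tr}A_+'+\operatorname{tr}A_-\ge\|Ax'\|^2/(\lambda_1\|x'\|^2)$ loses information at the step $\|A_\pm x'\|^2\le\lambda_1\,x'^{T}A_\pm x'$, and it fails already in the regular case. There $\|Ax'\|^2=\|Ax\|^2-\alpha^2d^2/N\ge\alpha^3d^2/(N(N-\alpha))$ and $\|x'\|^2=\alpha(N-\alpha)/N$. So the chain guarantees only $\alpha^2d/(N-\alpha)^2$, which is smaller than $\alpha d/(N-\alpha)$ by the factor $\alpha/(N-\alpha)$; for the Petersen graph it gives $4/3$ instead of $2$. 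The exact value $\alpha d/(N-\alpha)$ comes instead from the Rayleigh quotient of $A$ itself, $x'^{T}Ax'/\|x'\|^2=-\alpha d/(N-\alpha)$, which is Hoffman's ratio bound for $\lambda_N$.

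In the nonregular case the Perron correction is not absorbed either. With $\beta=\langle x,u\rangle^2$ you have exactly $\|Ax'\|^2=\|Ax\|^2-\lambda_1^2\beta$ and $\|x'\|^2=\alpha-\beta$. Combined with $\|Ax\|^2\ge\alpha^2\delta^2/(N-\alpha)$, this reaches the target only if $\lambda_1^2\le\alpha\delta^2/(N-\alpha)$, which fails in general (for regular graphs it forces $\alpha=N/2$). Your fallback is valid, but it gives only $|\lambda_N|\ge\alpha\delta^2/(2\lambda_1(N-\alpha))$. Hence it yields $\mathcal E(G)\ge\lambda_1+\alpha\delta^2/(2\lambda_1(N-\alpha))$, which is strictly weaker than the proposition.

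\textbf{Missing idea: bound only the smallest eigenvalue, sharply, by quotient-matrix interlacing.} The paper quotes the generalized Hoffman bound from Brouwer--Haemers, $\alpha\le N(-\lambda_1\lambda_N)/(\delta^2-\lambda_1\lambda_N)$. This rearranges to $|\lambda_N|\ge\alpha\delta^2/(\lambda_1(N-\alpha))$, and together with $\mathcal E(G)\ge\lambda_1+|\lambda_N|$ it finishes the proof in one line.

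To see the bound directly, let $\bar d\ge\delta$ be the average degree of the vertices of $I$, and consider the quotient matrix of the partition $\{I,V\setminus I\}$:
\[
Q=\begin{pmatrix}0&\bar d\\ \alpha\bar d/(N-\alpha)&b\end{pmatrix},\qquad b\ge0.
\]
Its eigenvalues $\mu_1>0>\mu_2$ interlace those of $A$, so $\lambda_1\ge\mu_1$ and $\lambda_N\le\mu_2$. Hence
\[
\lambda_1|\lambda_N|\ge\mu_1|\mu_2|=|\det Q|=\frac{\alpha\bar d^{\,2}}{N-\alpha}\ge\frac{\alpha\delta^2}{N-\alpha}.
\]
This is the min--max principle for $A$ compressed to the span of $\mathbf 1_I$ and $\mathbf 1_{V\setminus I}$. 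It handles the Perron direction automatically, which is exactly what your bookkeeping with $A_+'$ and $A_-$ could not do.
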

\begin{proof}
Let \(\lambda_N\) be the least adjacency eigenvalue. The generalized Hoffman bound \cite[Proposition 3.5.3]{BrouwerHaemers} gives
\[
\alpha(G)\leq
N\frac{-\lambda_1\lambda_N}{\delta^2-\lambda_1\lambda_N}.
\]
Rearranging yields
\[
|\lambda_N|
\geq
\frac{\alpha(G)\delta^2}
{\lambda_1(N-\alpha(G))}.
\]
Since \(\mathcal E(G)\geq\lambda_1+|\lambda_N|\), Theorem~\ref{thm:energy-bound} proves the first assertion. In the regular case, \(2s=dN\) and \(\delta=\lambda_1=d\), which gives the second assertion.
\end{proof}

\begin{proposition}\label{prop:chromatic}
Let \(G\) be a graph of positive size \(s\), with chromatic number \(\chi(G)\), spectral radius \(\lambda_1\), and least adjacency eigenvalue \(\lambda_N\). Then
\[
\operatorname{FR}_{\min}(G)
\geq
\frac{\lambda_1}{\sqrt{2s}}
\frac{\chi(G)}{\chi(G)-1}.
\]
In particular,
\[
\operatorname{FR}_{\min}(G)\geq\frac{\chi(G)}{\sqrt{2s}}.
\]
\end{proposition}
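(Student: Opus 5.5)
We will combine Theorem~\ref{thm:energy-bound} with a lower bound for the graph energy in terms of $\chi(G)$, $\lambda_1$, and $\lambda_N$. The natural tool is Hoffman's bound on the chromatic number,
\[
\chi(G)\geq 1-\frac{\lambda_1}{\lambda_N},
\]
which holds for every graph with at least one edge; see \cite{BrouwerHaemers}. Since $s>0$, we have $\lambda_N<0$. Rearranging the Hoffman bound gives $\chi(G)-1\geq\lambda_1/|\lambda_N|$, that is,
\[
|\lambda_N|\geq\frac{\lambda_1}{\chi(G)-1}.
\]
Here $\chi(G)\geq2$ because $G$ has an edge, so the division is legitimate.

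Next, as in the proof of Proposition~\ref{prop:independence}, we use $\mathcal E(G)\geq\lambda_1+|\lambda_N|$. The eigenvalues $\lambda_1>0$ and $\lambda_N<0$ are distinct, and the energy sums the absolute values of all eigenvalues. This gives
\[
\mathcal E(G)\geq\lambda_1\left(1+\frac1{\chi(G)-1}\right)=\lambda_1\frac{\chi(G)}{\chi(G)-1}.
\]
Dividing by $\sqrt{2s}$ and invoking Theorem~\ref{thm:energy-bound} yields the first assertion.

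For the ``in particular'' claim, it suffices to show that $\lambda_1\geq\chi(G)-1$. This is Wilf's bound $\chi(G)\leq1+\lambda_1$; we will cite it from \cite{BrouwerHaemers}. Substituting it gives
\[
\lambda_1\frac{\chi}{\chi-1}\geq(\chi-1)\frac{\chi}{\chi-1}=\chi.
\]

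No step here should be a genuine obstacle. The only points requiring care are confirming that Hoffman's bound is available in the cited reference in the form we use (for arbitrary, not necessarily regular, graphs), and checking that $\chi\geq2$ and $\lambda_N<0$, so that the rearrangement is valid.
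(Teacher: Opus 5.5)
Your proof is correct and follows essentially the same route as the paper: Hoffman's bound gives $|\lambda_N|\geq\lambda_1/(\chi(G)-1)$, then $\mathcal E(G)\geq\lambda_1+|\lambda_N|$ together with Theorem~\ref{thm:energy-bound} gives the first inequality, and Wilf's bound $\chi(G)\leq\lambda_1+1$ gives the second. The only difference is cosmetic: the paper applies Wilf's theorem to a connected component whose chromatic number is $\chi(G)$, which you would also need to do if you cite a connected-graph version of Wilf's theorem, although the inequality $\chi(G)\leq1+\lambda_1$ holds for every graph.
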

\begin{proof}
Hoffman's chromatic bound \cite[Theorem 3.6.2]{BrouwerHaemers} gives
\[
\chi(G)\geq1-\frac{\lambda_1}{\lambda_N},
\]
so \(|\lambda_N|\geq\lambda_1/(\chi(G)-1)\). The first inequality follows from the energy bound. Applying Wilf's theorem \cite{Wilf} to a connected component whose chromatic number is \(\chi(G)\) gives \(\chi(G)\leq\lambda_1+1\). Substitution yields the second inequality.
\end{proof}

\begin{proposition}\label{prop:clique}
Let \(G\) be a graph of positive size \(s\). If \(\omega(G)\) is its clique number, then
\[
\operatorname{FR}_{\min}(G)
\geq\frac{2(\omega(G)-1)}{\sqrt{2s}}.
\]
\end{proposition}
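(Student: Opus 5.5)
The plan is to reduce the statement to a lower bound on the graph energy, $\mathcal E(G)\geq 2(\omega(G)-1)$, and then invoke Theorem~\ref{thm:energy-bound}. Write $\omega=\omega(G)$. If $\omega=1$ the claim is trivial, since the right-hand side is $0$. We may therefore assume $\omega\geq2$. Let $K\subset V(G)$ be a clique of size $\omega$. The principal submatrix of $A$ indexed by $K$ is the adjacency matrix of $K_\omega$, namely $J-I$, with eigenvalues $\omega-1$ (once) and $-1$ (with multiplicity $\omega-1$). Its energy is therefore $2(\omega-1)$.

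The key step is to show that graph energy does not decrease under passing to an induced subgraph, i.e.\ that the nuclear norm of a principal submatrix is at most the nuclear norm of the full matrix. I would prove this by duality. Since $\|M\|_{S_1}=\max\{|\operatorname{tr}(W^\ast M)|: \|W\|_{op}\leq1\}$, take the unitary $W_0$ attaining the nuclear norm of the submatrix $A_K$, extend it by zero to an $N\times N$ matrix $W$ (which still has operator norm at most $1$), and observe that $\operatorname{tr}(W^\ast A)=\operatorname{tr}(W_0^\ast A_K)=\|A_K\|_{S_1}$. Hence
\[
\|A\|_{S_1}\geq\|A_K\|_{S_1}=2(\omega-1).
\]
Alternatively, one can argue directly with eigenvalues. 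Cauchy interlacing gives $\lambda_1(G)\geq\omega-1$. Moreover, the sum of the negative eigenvalues of $A$ is at most the minimum of $\operatorname{tr}(PAP)$ over projections $P$ onto subspaces of the coordinate span of $K$, and this minimum is at most $-(\omega-1)$. Combined with $\operatorname{tr}A=0$, this yields the same bound. I would present the duality version because it is cleaner and parallels the trace argument in the proof of Theorem~\ref{thm:equality-criterion}.

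Finally, Theorem~\ref{thm:energy-bound} gives
\[
\operatorname{FR}_{\min}(G)\geq\frac{\mathcal E(G)}{\sqrt{2s}}\geq\frac{2(\omega-1)}{\sqrt{2s}}.
\]
The main point requiring care is the monotonicity of the nuclear norm under principal submatrices. The rest is bookkeeping.
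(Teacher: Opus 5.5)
Your proof is correct, but your main argument takes a different route from the paper's. The paper uses only the first ingredient of your alternative sketch: Cauchy interlacing gives $\lambda_1\geq\omega(G)-1$, and since $\operatorname{tr}A=0$ one has $\mathcal E(G)=2\sum_{\lambda_j>0}\lambda_j\geq2\lambda_1$. So the negative-eigenvalue (Ky Fan) half of your sketch is not needed.

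Your duality argument proves instead that the nuclear norm is monotone under passage to submatrices. Hence $\mathcal E(G)\geq\mathcal E(H)$ for every induced subgraph $H$, and therefore $\operatorname{FR}_{\min}(G)\geq\mathcal E(H)/\sqrt{2s}$. Applied to $H=K_\omega$ this is the proposition. Applied to an induced matching with $k$ edges it gives $\mathcal E(G)\geq2k$, which the spectral-radius route cannot detect: for $G=kK_2$ one has $2\lambda_1=2$.

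Conversely, the paper's route yields the intermediate bound $\operatorname{FR}_{\min}(G)\geq2\lambda_1/\sqrt{2s}$. This is at least as strong as the clique bound and needs nothing beyond interlacing.

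Incidentally, the case $\omega=1$ in your proof cannot occur, since $s>0$ forces $\omega\geq2$.
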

\begin{proof}
Cauchy interlacing \cite[Corollary 2.5.2]{BrouwerHaemers} gives \(\lambda_1\geq\omega(G)-1\). Since the adjacency matrix has trace zero,
\[
\mathcal E(G)=2\sum_{\lambda_j>0}\lambda_j\geq2\lambda_1.
\]
The result follows from Theorem~\ref{thm:energy-bound}.
\end{proof}
For a graph $G=(V,E)$, write $e(S,V\setminus S)$ for the number of edges with one endpoint in $S$ and the other in $V\setminus S$. Its isoperimetric (Cheeger) constant is
\[
h(G)=\min_{\substack{\varnothing\neq S\subseteq V\\ |S|\leq |V|/2}}
\frac{e(S,V\setminus S)}{|S|}.
\]

\begin{proposition}\label{prop:cheeger}
Let $G\neq K_N$ be a $d$-regular graph of order $N$, where $d>0$. Then
\[
\operatorname{FR}_{\min}(G)\geq \frac{4(d-h(G))}{\sqrt{dN}}.
\]
\end{proposition}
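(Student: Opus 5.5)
The plan is to combine the energy bound of Theorem~\ref{thm:energy-bound} with the elementary direction of the Cheeger inequality. Since $G$ is $d$-regular, $2s=dN$, so Theorem~\ref{thm:energy-bound} gives
\[
\operatorname{FR}_{\min}(G)\geq\frac{\mathcal E(G)}{\sqrt{dN}}.
\]
It therefore suffices to prove the purely spectral inequality $\mathcal E(G)\geq4(d-h(G))$. Let $d=\lambda_1\geq\lambda_2\geq\dots\geq\lambda_N$ be the adjacency eigenvalues. We aim to establish two intermediate bounds: $h(G)\geq(d-\lambda_2)/2$, and $\mathcal E(G)\geq2(d+\lambda_2)$. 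Together they give
\[
4(d-h(G))\leq 4d-2(d-\lambda_2)=2(d+\lambda_2)\leq\mathcal E(G).
\]

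For the Cheeger step, fix $S$ with $\varnothing\neq S$ and $|S|\leq N/2$. We use the test vector $x=\mathbf 1_S-\frac{|S|}{N}\mathbf 1$, which is orthogonal to the constant eigenvector. For the Laplacian $L=dI-A$ we have
\[
x^TLx=\sum_{\{u,v\}\in E}(x_u-x_v)^2=e(S,V\setminus S),
\qquad
\|x\|_2^2=\frac{|S|\,|V\setminus S|}{N}.
\]
By the Courant--Fischer characterization of the second smallest Laplacian eigenvalue $d-\lambda_2$,
\[
d-\lambda_2\leq\frac{N\,e(S,V\setminus S)}{|S|\,|V\setminus S|}\leq\frac{2\,e(S,V\setminus S)}{|S|},
\]
where the last step uses $|V\setminus S|\geq N/2$. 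Minimizing over $S$ gives $h(G)\geq(d-\lambda_2)/2$.

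For the energy step, the trace of the adjacency matrix vanishes, so $\mathcal E(G)=2\sum_{\lambda_j>0}\lambda_j$. If $\lambda_2\geq0$, this sum is at least $\lambda_1+\lambda_2=d+\lambda_2$, because $\lambda_2$ either contributes a positive term or contributes nothing when it equals $0$. This gives $\mathcal E(G)\geq2(d+\lambda_2)$.

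The main obstacle is ensuring $\lambda_2\geq0$, and this is where the hypothesis $G\neq K_N$ enters. I would invoke the classical fact that $\lambda_2<0$ forces $G$ to be complete multipartite with a single nontrivial part structure, hence $K_N$ for a connected graph. A more self-contained route uses interlacing. If $G\neq K_N$, there are nonadjacent vertices $u,w$. If $G$ is disconnected, $\lambda_2\geq0$ follows from a second component, which has a nonnegative top eigenvalue. Otherwise, take a shortest path $u\sim z\sim w$ with $u\not\sim w$; it induces the path $P_3$, with eigenvalues $\sqrt2,0,-\sqrt2$. Cauchy interlacing \cite[Corollary 2.5.2]{BrouwerHaemers} then gives $\lambda_2(G)\geq\lambda_2(P_3)=0$. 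This would complete the argument.
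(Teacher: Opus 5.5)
Your proposal is correct and follows the paper's proof essentially step for step: the energy bound with $2s=dN$, the Cheeger estimate $h(G)\geq(d-\lambda_2)/2$, the inequality $\mathcal E(G)\geq 2(\lambda_1+\lambda_2)$ from the vanishing trace, and $\lambda_2\geq0$ via a second component or an induced $P_3$ with Cauchy interlacing. The only difference is that you derive the Cheeger estimate yourself from Courant--Fischer with the test vector $\mathbf 1_S-\frac{|S|}{N}\mathbf 1$, whereas the paper cites it from Brouwer--Haemers.
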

\begin{proof}
First, $\lambda_2(G)\geq0$. If $G$ is disconnected, then $d$ is an adjacency eigenvalue of multiplicity at least two, so $\lambda_2(G)=d$. If $G$ is connected, then, because $G\neq K_N$, it contains an induced copy of $P_3$. Cauchy interlacing gives
\[
\lambda_2(G)\geq\lambda_2(P_3)=0.
\]

The Cheeger inequality for regular graphs \cite[Proposition 4.5.2]{BrouwerHaemers} gives
\[
h(G)\geq\frac{d-\lambda_2(G)}{2},
\]
and hence $\lambda_2(G)\geq d-2h(G)$. Since the adjacency matrix has trace zero and $\lambda_1(G)=d$, we obtain
\begin{align*}
\mathcal E(G)
&=2\sum_{\lambda_j>0}\lambda_j
 \geq 2\bigl(\lambda_1(G)+\lambda_2(G)\bigr)\\
&\geq 2\bigl(d+d-2h(G)\bigr)
 =4\bigl(d-h(G)\bigr).
\end{align*}
The result follows from Theorem~\ref{thm:energy-bound} and $2s=dN$.
\end{proof}

\section{Concluding remarks and open problems}\label{section:conclusion}

The results above show a sharp contrast between structured and generic graphs. Equality in the energy bound forces a weighted partial-permutation structure in Fourier space and, in particular, regularity and even-walk circulancy. At the same time, the affine-involution and Singer constructions show that equality is not confined to cyclic or abelian Cayley graphs. At the opposite extreme, the enumeration bounds show that graphs of small edge complexity occupy a negligible portion of the space of all graphs, while the concentration argument proves that the minimum Fourier ratio of a random graph has the optimal order $N$. The lower enumeration bound shows, however, that the low-complexity class still contains exponentially many perturbations of a single highly structured graph.

Several natural questions remain. First, Theorem~\ref{thm:equality-criterion} is a complete matrix criterion for a fixed labeling, but a purely graph-theoretic characterization of equality is not known. The regularity and component restrictions proved here are necessary but far from sufficient. Second, Proposition~\ref{prop:weak-product-energy} establishes closure under weak products only for coprime graph orders. It is unknown whether the coprimality hypothesis can be removed, or whether the equality-attaining isomorphism classes carry a useful multiplicative structure.

Finally, the projector results suggest a parallel classification problem for harmonic complexity. Even for an edge-extremizing graph, not every spectral projector is forced by the present argument to attain the multiplicity lower bound. Understanding precisely which Fourier blocks contribute to a given eigenspace appears to be the appropriate starting point for that question.

\end{document}